\documentclass[11pt]{amsart}
\usepackage{setspace}
\usepackage{aliascnt}
\usepackage{amsmath, float}
\usepackage{eucal}
\usepackage{amssymb,amsmath,amsthm,latexsym, mathtools}
\usepackage{xcolor}
\usepackage[shortlabels]{enumitem}
\usepackage{microtype}
\usepackage{longtable,booktabs}
\usepackage{tikz-cd}
\usepackage{graphicx}
\usepackage{hyperref}
\usepackage{comment}
\usepackage{wrapfig}
\usepackage{pifont}
\usepackage[capitalise, nameinlink]{cleveref}

\theoremstyle{definition}
\newtheorem{thm}{Theorem}[section]

\crefname{subsection}{Subsection}{Subsections}
\Crefname{subsection}{Subsection}{Subsections}

\newaliascnt{lemma}{thm}

\aliascntresetthe{lemma}

\newaliascnt{corollary}{thm}
\newtheorem{corollary}[corollary]{Corollary}
\aliascntresetthe{corollary}

\newaliascnt{proposition}{thm}
\newtheorem{proposition}[proposition]{Proposition}
\aliascntresetthe{proposition}

\newaliascnt{conjecture}{thm}

\aliascntresetthe{conjecture}

\newaliascnt{remark}{thm}
\newtheorem{remark}[remark]{Remark}
\aliascntresetthe{remark}

\newaliascnt{definition}{thm}
\newtheorem{definition}[definition]{Definition}
\aliascntresetthe{definition}

\newaliascnt{example}{thm}

\aliascntresetthe{example}

\newaliascnt{question}{thm}

\aliascntresetthe{question}

\newaliascnt{notation}{thm}

\aliascntresetthe{notation}

\usepackage{todonotes}

\def\c{\mathbb{C}}
\def\e{\mathbb{E}}
\def\f{\mathbb{F}}

\def\m{\mathbb{M}}

\def\r{\mathbb{R}}
\def\z{\mathbb{Z}}

\def\ca{\mathcal{A}}
\def\ce{\mathcal{E}}

\def\Ext{\operatorname{Ext}}

\def\ker{\operatorname{ker}}

\def\Sq{\operatorname{Sq}}

\newcommand{\ddash}{/\!\!/}

\author{Prerna Dhankhar}\address{University of Kentucky}\email{prerna.dhankhar@uky.edu}

\title{On the Motivic \(K\)-Theories of Classifying Spaces of Cyclic \(2\)-Groups}

\begin{document}

\begin{abstract}
We compute the $2$-completed integral motivic homology, effective algebraic $K$-theory, and very effective Hermitian $K$-theory of the geometric classifying spaces $B\mu_{2^n}$, $n\geq 2$, over algebraically closed and real closed fields. We first carry out the corresponding classical calculations and, then obtain the motivic results using motivic spectral sequences and comparison techniques. The results reveal the classical $2$-torsion patterns together with motivic $\tau$-periodicity and $\rho$-torsion.\end{abstract}
\maketitle
\vspace{-1.5em}
\tableofcontents
\newpage

\section{Introduction}
The study of motivic invariants such as motivic cohomology, algebraic $K$-theory, and Hermitian $K$-theory sits at a crossroads of algebraic geometry and homotopy theory. These invariants of motivic spaces encode information about Chow groups and have proven especially powerful in the construction of counterexamples to the integral Hodge conjecture \cite{AH62}. 

The development of motivic stable homotopy theory by Morel and Voevodsky \cite{MV99} established a framework needed to translate questions in algebraic geometry into questions in homotopy theory, making it possible to apply familiar tools—such as spectral sequences and generalized cohomology theories—to motivic spaces. In this sense, these motivic invariants often refine their classical counterparts, encoding additional structures that become visible only in the motivic setting.
This perspective was instrumental in Voevodsky’s proof of the Milnor and Bloch--Kato conjectures, which established profound connections between motivic cohomology, étale cohomology, and Milnor $K$-theory.

From the topological perspective, generalized (co)homology theories of classifying spaces have been studied extensively. In particular, Bruner and Greenlees \cite{BG03,BG10} computed connective complex and real topological $K$-theory for a large family of finite groups, including cyclic groups. Their computations highlight both rich algebraic structure and the increasing complexity as the group order grows. These results naturally motivate analogous questions in motivic homotopy theory.

Over the past decade, computational motivic homotopy theory has undergone rapid development. Isaksen and Shkembi \cite{IS11} introduced $\c$-motivic analogs of connective $K$-theories and described their cohomology in terms of the motivic Steenrod algebra \cite{Voe10}, a pivotal result for the present paper. Further advances by Hill \cite{Hil11}, Dugger and Isaksen \cite{DI16a}, and others developed computational tools for the motivic Adams spectral sequence over $\r$ and established important convergence and structural results. More recently, Isaksen, Wang, and Xu \cite{IWX23} introduced powerful comparison techniques between classical and motivic Adams spectral sequences, leading to substantial new computations of motivic stable stems and significantly expanding the computational reach of motivic stable homotopy theory.

Building on these developments, in \cite{DFNQJ25} we, together with Field, Nigam, Quigley, and Yang, applied motivic Adams and Atiyah--Hirzebruch spectral sequence techniques to compute the $2$-completed integral motivic homology, algebraic $K$-theory, and Hermitian $K$-theory of geometric classifying space of the cyclic group of order $2$ over algebraically closed fields, the real numbers, and finite fields. These computations illustrated the interplay between classical and motivic phenomena and provided a foundation for the investigations carried out in the present work.

Extending this line of investigation, the present paper computes the classifying spaces of cyclic groups of order $2^n$, where $n>1$.
Let $F$ be either an algebraically closed field of characteristic not equal to $2$ or a real closed field. 
\begin{thm}\label{thm:Main Theorem}
Let $n>1$.  
The $2$-completed $F$-motivic homology of the motivic geometric classifying space of the cyclic group of order $2^n$ is explicitly computed as a module over the respective coefficient rings in each of the following cases:
\begin{enumerate}
\item Integral motivic homology (see \cref{thm:c-HZ} and \cref{thm:r-HZ});
\item Effective algebraic $K$-theory (see \cref{thm:c-kgl} and \cref{thm:r-kgl});
\item Very effective Hermitian $K$-theory (see \cref{thm:c-kq} and \cref{thm:r-kq}).
\end{enumerate}
These computations are complete in all stems for integral motivic homology and through stem $14$ for the 
$K$-theories. For $n=2$, all three computations are
complete in all stems.
\end{thm}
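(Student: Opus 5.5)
Since the statement collects the results of the later sections, the plan is to prove each part separately and in the same order: first the classical computation, then its motivic lift over $F$ algebraically closed, then the version over $F$ real closed. The starting point is the cohomology of the motivic geometric classifying space $B\mu_{2^n}$. For $n\geq 2$, $H^{*,*}(B\mu_{2^n};\mathbb{Z}/2)$ is free over $\m_2^F$ on classes $u$ in degree $(1,1)$ and $v$ in degree $(2,1)$, with $u^2=0$ over $\c$. Over $\r$ there is the extra relation $u^2=\rho u$ up to terms I have to check, and it is a $\tau$-free twist of the $n=1$ answer. The higher Bockstein $\beta_n$ links $u$ to $v$, while the ordinary $\Sq^1$ acts trivially on $u$. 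This pins down the $\ca$-module structure. For $H\z$ it is trivial on the relevant subalgebra, and the $\ce(0)$-structure is free except for the generator. For $kgl$ we need the structure over $\ce(1)=\ce(Q_0,Q_1)$, and for $kq$ over $\ca(1)$. I will use the Isaksen--Shkembi identifications $H^{*,*}(kgl)\cong \ca\ddash\ce(1)$ and $H^{*,*}(kq)\cong\ca\ddash\ca(1)$, with their $\r$-analogues. A change of rings then identifies the Adams $E_2$-pages with $\Ext_{\ce(1)}$ and $\Ext_{\ca(1)}$ of $\tilde H^{*,*}(B\mu_{2^n})$.

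Classically, $\Sq^1$ and $\Sq^2$ act on $H^*(B\z/2^n;\f_2)=E(x)\otimes P(y)$ only through $\Sq^2 y^k$, since $\Sq^1 x=0$ for $n\ge2$. So as an $\ca(1)$-module the cohomology splits into $x$-free and $x$-multiple copies of the $P(y)$-type modules, up to suspension. I will compute the Ext groups using the known Ext of $P(y)$ over $\ca(1)$ (the $\c P^\infty$ case) and over $\ce(1)$. The remaining information comes from the $d_{n}$-type Adams differentials, which encode the higher Bockstein; they equal $h_0^{n-1}$-type differentials from towers on $x y^k$ to those on $y^{k+1}$. I will cross-check the output against Bruner--Greenlees for $ku_*$ and $ko_*$ of $B\z/2^n$. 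For $H\z$ this gives the answer directly: $H\z_*B\z/2^n$ is $\z/2^n$ in odd degrees.

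Over $\c$ (and hence any algebraically closed $F$ by rigidity), the motivic Ext is obtained by inserting $\tau$. The modules are $\tau$-free, so the motivic $E_2$-page is the classical one with $\tau$ adjoined and the weights determined by the generators. The differentials are then lifted by the Isaksen--Wang--Xu comparison, which inverts $\tau$ to recover the classical result and sets $\tau=0$. The outcome is $\tau$-periodic copies of the classical answer, with $\tau$-torsion exactly where $h_1$-multiples appear, as in $kq$.

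Over $\r$ I will run the $\rho$-Bockstein spectral sequence from the $\c$ answer, together with the motivic Atiyah--Hirzebruch spectral sequence, compared with the $n=1$ results of \cite{DFNQJ25} via the map $B\mu_2\to B\mu_{2^n}$.

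The main obstacle is the real closed case for $kq$. There, $\rho$-Bockstein differentials interact with the $\Sq^2$-action on $v^k$ and with the higher Bockstein, and $\Ext_{\ca(1)}$ grows in complexity. This is why I expect the computation to be complete only through stem $14$. For $n=2$, I expect the $\ca(1)$-module to decompose into finitely many periodic patterns, with a family of Ext classes repeating every $8$ stems. If that holds, a periodicity argument via the $v^4$-multiplication (Bott-type) extends the $n=2$ computation to all stems.
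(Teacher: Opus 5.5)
Your framework matches the paper's: the $\ca(1)$-splitting of $\tilde H^{**}(B\mu_{2^n})$ into $\Sigma^{1,1}\m_2$ and shifted copies of $\ca(1)\ddash\ce(1)$, change of rings, the AHSS for orders, $\tau$-freeness and Betti realization over $\c$, and comparison with $\c$ over $\r$. It suffices for $H\z$, with two slips. For $n\ge 2$ the $\ca(0)$-module is a sum of trivial modules, not free. The differentials run from the tower on $y^{k}$ to the tower on $xy^{k-1}$, i.e.\ $d_n(x_{2k})=h_0^{n}x_{2k-1}$, not an $h_0^{n-1}$-differential in the other direction.

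\textbf{Main gap.} In the $K$-theory parts you assume the Adams differentials are just these higher-Bockstein, tower-to-tower differentials. That is false, and the AHSS cannot detect it. For $ku$, taking $d_n(\tilde x_{2k})=h_0^n\tilde x_{2k-1}$ with $v_1$-linearity gives $E_\infty\cong\bigoplus_k\f_2[h_0,v_1]/(h_0^n)\{\tilde x_{2k-1}\}$. This has the correct order but the wrong shape. The true differential is $d_n(\tilde x_{2k})=h_0^n\tilde x_{2k-1}+h_0^{n-1}v_1\tilde x_{2k-3}$, which yields $\z/2^{n+i}\oplus(\z/2^{n-1})^{i}$ in stem $2i+1$. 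In $ko$ one likewise needs $d_n(v_1x_6)=h_0^{n+1}x_7+h_0^{n-1}v_1^2x_3$.

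The paper produces these cross terms from ingredients absent from your plan:
\begin{itemize}
\item the Leibniz rule for the Pontryagin product on the $E_2$-page, e.g.\ $h_0x_6=x_2\cdot v_1x_2$;
\item an eta-invariant computation on $L(2^n;1,1,1,1)$, showing $ko_7(BC_{2^n})$ contains an element of order divisible by $2^{n+3}$, which forces the $v_1^2x_3$ term;
\item the restriction $\Ext_{\ca(1)}\to\Ext_{\ce(1)}$, $v_1x_{4k+2}\mapsto v_1\tilde x_{4k+2}+h_0\tilde x_{4k+4}$, which transports the $ko$-differentials to $ku$, and then via Betti realization and extension of scalars to $kgl$ and $kq$.
\end{itemize}
``Cross-checking against Bruner--Greenlees'' does not replace this. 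Your pure tower differentials already contradict naturality along $ko\to ku$: $d_n(v_1x_2)=h_0^{n+1}x_3$ is forced by the AHSS. Pure tower differentials would give $d_n(h_0\tilde x_4+v_1\tilde x_2)=h_0^{n+1}\tilde x_3+h_0^nv_1\tilde x_1$, but naturality requires $h_0^{n+1}\tilde x_3$.

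\textbf{Range of the computation.} Your explanation of the stem-$14$ bound is wrong. It does not come from $kq^\r$ or $\rho$-Bockstein complexity; the bound already occurs classically and over $\c$. The cause is that the differentials on the indecomposable classes $v_1x_{2^k-2}$ for $k\ge 4$, starting with $v_1x_{14}$, are not determined by the above methods.

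\textbf{The $n=2$ all-stems claim.} Your periodicity argument fails for three reasons:
\begin{itemize}
\item The $\ca(1)$-module and the $E_2$-page are identical for every $n\ge 2$, so nothing special about $n=2$ is visible there.
\item Multiplication by the Bott class $b=v_1^4$ acts within each summand $\Ext\{x_k\}$ rather than moving $x_k$ to $x_{k+8}$.
\item The groups $ko_{8m+3}(BC_4)\cong\z/2^{4m+3}\oplus\z/2^{2m+1}$ grow with $m$, so there is no $8$-periodicity isomorphism to exploit.
\end{itemize}
The paper instead uses Bruner--Greenlees' explicit groups $ko_*(BC_4)$ to force the $d_2$-differentials in all stems. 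It then propagates them to $ku$, $kgl$ and $kq$ by the comparison maps above.
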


Although our computations are carried out over the complex numbers $\c$ and $\r$, they are not essentially tied to this choice of base field. The Lefschetz principle suggests that phenomena observed over $\c$ and $\r$ persist over any algebraically closed field of characteristic not equal to $2$ and real closed fields, respectively, as discussed in \cref{subsec:extention}.

We begin with the classical computations and then adapt the arguments to the $\c$- and $\r$-motivic settings. As in the case of $BC_2$ \cite{DFNQJ25}, over $\c$, the resulting charts closely parallel the classical ones, enriched by motivic $\tau$-periodicity. Over $\r$, however, additional $\rho$-torsion phenomena produce genuinely new structures invisible classically and over $\c$.
Nevertheless, there are natural surjections
\[H\z^{motivic}_{**}(B\mu_{2^n}) \twoheadrightarrow H\z^{classical}_{*}(BC_{2^n})\]
\[kgl^{motivic}_{**}(B\mu_{2^n}) \twoheadrightarrow ku^{classical}_{*}(BC_{2^n})\]
\[kq^{motivic}_{**}(B\mu_{2^n}) \twoheadrightarrow ko^{classical}_{*}(BC_{2^n}).\]
For $n>1$, new phenomena distinguish the computations for $B\mu_{2^n}$ from the case $n=1$. In particular, the action of $\Sq^1$ on the motivic cohomology of $B\mu_{2^n}$ for $n>1$ is trivial. This requires a different approach for computing the $E_2$-page and differentials of the motivic Adams spectral sequence from the $n=1$ case, where the nontrivial $\Sq^1$-action plays a central role in the computations.

Our computations proceed in two stages. First, we determine the order of the groups via the Atiyah--Hirzebruch spectral sequence associated to the cellular filtration of $B\mu_{2^n}$. The attaching maps govern the differentials in this spectral sequence and, in most cases, force it to collapse at a finite page. Second, we resolve extension problems and determine the module structure over coefficient rings using the motivic Adams spectral sequence which takes the form:
\[
E_2^{s,(t,w)} = \Ext_{\ca^F}^{s,(t,w)}\!\Big(H^{**}(X), \m_2^F\Big)
\;\Rightarrow\;
\pi_{t-s,w}(X)^{\wedge}_{(2,\eta)}.
\]
Over $F=\c$ and $\r$, the $(2,\eta)$-completion agrees with $2$-completion, so this spectral sequence computes the desired groups. We discuss this framework in detail in \cref{sec:strategy}.

A key feature of our approach is that we first compute the $ko$-homology of $BC_{2^n}$ and then use it to compute the $ku$-homology. 
The $ko$-homology has a richer structure and it is more accessible to our methods.  The periodicization map
\[
p: ko_*(BC_{2^n}) \to KO_*(BC_{2^n}),
\]
and studying the eta invariant on periodic $KO$-homology, we obtain information about the connective $ko$-homology. This approach is discussed in detail in \cref{app:eta-invariant}.
Then, we describe a map between the $E_2$-pages of the corresponding Adams spectral sequences that allows us to transfer information from $ko$ to $ku$.

Beyond the explicit calculations, we highlight the framework combining Atiyah--Hirzebruch and motivic Adams spectral sequences to analyze these motivic homology theories.

\subsection{Future Work}
First, the author is currently working toward completing the computation of higher differentials in the Adams spectral sequence for $n>2$. Our approach is to exploit the correspondence between differentials in the Bockstein spectral sequence and $h_0$-towers in the motivic Adams spectral sequence, as described by the \emph{May--Milgram theorem}. Bruner and others have used a similar approach for quaternion groups in their work in \cite{BB96}.

Then, we plan to extend these computations \emph{ to finite fields $\f_q$} for odd characteristics, $q>1$. These calculations reveal rich and intricate structures, and are currently in progress.

Another natural direction is to investigate equivariant refinements of these motivic homology theories. The equivariant Betti realization functor connects the motivic stable homotopy category $Ho(SH(\r))$ to $C_2$-equivariant stable homotopy theory \(Ho(Sp^{C_2})\), whose homotopy groups are graded over \(RO(C_2)\). 
From a computational perspective, this relationship allows one to push $\r$-motivic computations to $C_2$-equivariant data.

The $C_2$-equivariant analogues of algebraic and Hermitian $K$-theory are denoted by $k\r$ and $ko_{C_2}$, respectively.
Motivated by my $\r$-motivic calculations, I plan to investigate $k\r_{**}(B_{C_2}\mu_{2^n})$ and
$(ko_{C_2})_{**}(B_{C_2}\mu_{2^n})$.

\subsection{Organization}

\cref{sec:background} gathers the essential background, beginning with the motivic analogs of cohomology of a point, steenrod algebra and connective $K$-theories. Then we continue with the structure of the mod–two motivic cohomology of the geometric classifying space $B\mu_{2^n}$ for $n>1$, viewed as a module over a suitable subalgebra of the motivic Steenrod algebra.

In \cref{sec:strategy} we describe the general framework that guides all of our computations, emphasizing how the Atiyah--Hirzebruch and Adams spectral sequences fit together in this setting. \cref{sec:classical} begins with the classical calculation, allowing us to highlight the familiar patterns before turning to the richer $\mathbb{C}$- and $\mathbb{R}$-motivic analogues. \cref{Integral Homology}, \cref{kgl}, and \cref{kq} form the heart of the paper. There, we compute the integral homology, effective algebraic $K$-theory, and very effective hermitian $K$-theory, respectively. 

\subsection{Acknowledgments}

This project grew out of a summer collaborative workshop with Rebecca Field, Arjun Nigam, J.\,D. Quigley, and Albert Jinghui Yang. I am also grateful for many insightful conversations with Bert Guillou and Arun Debray, which clarified and strengthened several parts of this work. I also want to thank Jackson Morris for his thoughtful feedback on earlier drafts.

\section{Background} \label{sec:background}

In this section, we recall some definitions and basic facts from motivic homotopy theory that will be used throughout this paper.

Throughout this section, let $F$ be a field, either $\c$ or $\r$. Here $\c$ and $\r$ serve as representative base fields for the algebraically closed and real
closed fields. The motivic Lefschetz principles recalled in \cref{subsec:extention} extend the calculations considered here to algebraically
closed fields of characteristic not equal to $2$ and to real closed
fields, respectively.

\subsection{Cohomology of a point}

Define
\(
\m_2^F := H^{**}(\mathrm{pt}; \f_2)
\)
to be the bigraded \(F\)-motivic mod 2 cohomology ring of a point. Here the first grading denotes the cohomological degree and the second denotes the weight.

By Voevodsky (\cite{Voe03}, \cite{Voe10}),
\[
	\m_2^F \;\cong\;
	\begin{cases}
		\f_2[\tau], & F=\c,\; \\
		\f_2[\tau, \rho], & F=\r
	\end{cases}
	\]
where \(\tau\) has bidegree \(|\tau|=(0,1)\) and \(\rho\) has bidegree \( |\rho| = (1,1).\)

\subsection{The Motivic Steenrod Algebra}

The mod two motivic Steenrod algebra over \(F\), denoted \(\ca^F\), is the ring of bistable cohomology operations generated by Steenrod squares (as an algebra over \(\m_2^F\)),
\[
\Sq^i \colon H^{p,q}(-;\f_2) \longrightarrow H^{p+i,q+\lfloor i/2 \rfloor}(-;\f_2),
\]
subject to motivic Adem relations; see \cite{Voe03}.

\(\ca^\c\) closely resembles the classical Steenrod algebra \(\ca\). One of the major differences is the presence of the class \(\tau\). In fact, there is a map
\[
\ca^{\c} \otimes_{\m_2} \m_2[\tau^{-1}] \;\to\; \ca \otimes_{\m_2} \m_2[\tau^{-1}]
\]
defined by sending
\[
\Sq^{2k} \longmapsto \tau^{-k} \Sq^{2k}
\qquad\text{and}\qquad
\Sq^{2k-1} \longmapsto \tau^{-k} \Sq^{2k-1},
\]
which is an isomorphism of bigraded rings, see \cite[Theorem~2.9]{DI10} for details.

\(\ca^\r\) contains additional structure arising from the class \(\rho \in H^{1,1}(\mathrm{pt};\f_2)\). In particular, the Adem relations and coproduct formulas involve \(\rho\)-terms, and this extra structure plays a central role in computations via the \(\rho\)-Bockstein and motivic Adams spectral sequences. A detailed description of \(\ca^{\r}\), including its dual Hopf algebra and coproduct structure, can be found in \cite{DI16a}. 

\begin{remark}
\(\ca^F\) acts naturally on the motivic cohomology \(H^{**}(X;\f_2)\), for a motivic spectrum or space \(X\), endowing it with the structure of a left \(\ca^F\)-module. 
\end{remark}
In particular, \(\m_2^F\) carries a canonical \(\ca^F\)-module structure. For example, the action of \(\ca^{\c}\) on \(\m_2^{\c} \cong \f_2[\tau]\) is trivial, since \(\m_2^{\c}\) is concentrated in cohomological degree zero. In other words, all positive-degree $\Sq^i$ vanish for degree reasons.
In contrast, the action of \(\ca^{\r}\) on \(\m_2^{\r} \cong \f_2[\tau,\rho]\) is nontrivial: for example, \(\Sq^1(\tau) = \rho\),\; \(\Sq^2(\tau^2) = \tau \rho^2\), and there are additional higher nontrivial Steenrod operations.

We now introduce motivic analogs of some classical Steenrod subalgebras.

\begin{definition}
\label{def:subalgebras}
Let 
\begin{enumerate}
\item \(\ca^F(0)\) be the \(\m_2^F\)-subalgebra of \(\ca^F\) generated by
\( Q_0 = \Sq^1.\)

\item \(\ce^F(1)\)  be the \(\m_2^F\)-subalgebra of \(\ca^F\) generated by
\(
Q_0 \text{ and }  Q_1 = \Sq^1\Sq^2 + \Sq^2\Sq^1.
\)

\item \(\ca^F(1)\) be the \(\m_2^F\)-subalgebra of \(\ca^F\) generated by \(\Sq^1\) and \(\Sq^2\).
\end{enumerate}
\end{definition}

\begin{definition}
For a subalgebra \(B \text{ of } \ca\), define
\[
\ca\ddash B := \ca \otimes_B \m_2^F,
\]
where \(\m_2^F\) is the trivial \(B\)-module with generator concentrated in degree zero.
In this paper, \(B\) will always refer to one of \(\ca^F(0)\), \(\ce^F(1)\), or \(\ca^F(1)\).
\end{definition}

\subsection{Motivic ``Connective'' \texorpdfstring{$K$-theories}{K-theories}}\label{theories}

In the classical stable homotopy category, the connective \(K\)-theory spectra \(ku\) and \(ko\) arise as the connective covers of the topological K-theory spectra \(KU\) and \(KO\), respectively. These spectra play a central role in stable homotopy theory, and their cohomology admits well-known descriptions in terms of the classical Steenrod algebra.

Motivic homotopy theory admits closely analogous constructions. In the
$F$-motivic stable homotopy category $\mathbb{SH}(F)$, the full
subcategory $\mathbb{SH}(F)^{\mathrm{eff}}$ of effective motivic spectra
is the localizing subcategory generated by the suspension spectra of
smooth $F$-schemes. It carries the effective homotopy $t$-structure,
whose nonnegative part is the subcategory
$\mathbb{SH}(F)^{\mathrm{veff}}$ of very effective motivic spectra. These
subcategories give rise to the effective and very effective slice
filtrations, respectively.

The effective algebraic $K$-theory spectrum
\(
kgl^F
\)
is obtained as the effective cover of algebraic $K$-theory and serves as
the motivic analogue of $ku$, while the very effective Hermitian
$K$-theory spectrum
\(
kq^F 
\)
is obtained as the very effective cover of Hermitian $K$-theory and
serves as the motivic analogue of $ko$. These connective motivic
$K$-theory spectra are central to our study, both because they reproduce
the structural relationship between $ko$ and $ku$ and because their
motivic cohomology is amenable to explicit computation. The effective slice filtration was introduced by Voevodsky
\cite{Voe01}, while its very effective refinement was introduced by
Spitzweck and \O stv{\ae}r \cite{SØ12} and further developed by Bachmann
\cite{Bac17}.

A key feature of very effective Hermitian $K$-theory is its relationship with effective algebraic $K$-theory via the motivic Hopf map, as described for example in \cite{IS11,ARO17} and recalled in the following proposition.
\begin{proposition}
\cite[Proposition 2.11]{ARO17}
Over a field $F$ of characteristic not equal to $2$, there is a cofiber sequence of motivic spectra
\[
\Sigma^{1,1} kq^F \xrightarrow{\ \eta\ } kq^F \longrightarrow kgl^F,
\]
where the map is induced by multiplication by $\eta$. This cofiber sequence expresses effective algebraic $K$-theory as the $\eta$-cofiber of very effective Hermitian $K$-theory and may be regarded as the motivic analogue of the classical relationship between $ko$ and $ku$.
\end{proposition}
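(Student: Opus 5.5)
The plan is to construct the cofiber sequence directly on the level of motivic spectra, following the classical Wood cofiber sequence $\Sigma ko \xrightarrow{\eta} ko \to ku$, and then check it against the slice and very effective filtrations. The starting point is the periodic version. Over a field $F$ of characteristic not $2$ there is the Hermitian $K$-theory spectrum $KQ$, and a motivic Wood cofiber sequence
\[
\Sigma^{1,1} KQ \xrightarrow{\ \eta\ } KQ \xrightarrow{\ f\ } KGL.
\]
Here $f$ is the forgetful map. One identifies the cofiber of $\eta$ with $KGL$ by using the equivalence $KGL \simeq KQ \wedge C\eta$, which is the motivic analogue of $KU \simeq KO\wedge C\eta$. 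I would take this periodic statement as known input (Röndigs–Østvær, Schlichting's Bott sequence). Its proof comes from the Bott sequence relating $GW$, $K$ and $GW^{[1]}$, together with the identification of the connecting maps with $\eta$.

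Next, I pass to connective covers. Apply the very effective cover functor $\tilde f_0$ to $KQ$, which by definition gives $kq$. For $KGL$, the very effective cover and the effective cover agree, since $KGL$ is $(2,1)$-periodic with $\mathbb{G}_m$-connective slices. This gives $kgl = \tilde f_0 KGL = f_0 KGL$. Because $\Sigma^{1,1}$ preserves very effective objects, $\eta\colon \Sigma^{1,1}kq\to kq$ lifts uniquely through the cover. Likewise the composite $kq\to KQ\to KGL$ factors through $kgl$. This produces a sequence $\Sigma^{1,1}kq\to kq\to kgl$ whose composite is null: the composite $\Sigma^{1,1}kq \to kgl$ corresponds under adjunction to the null map $\Sigma^{1,1}kq\to KGL$.

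The main step, and the expected obstacle, is to show that the induced map $C(\eta|_{kq})\to kgl$ is an equivalence. The covers are not exact functors, so the cofiber of the lift need not be the cover of the cofiber. My approach is to compare the generalized (very effective) slices. The slices of $KQ$ are known: $s_q KQ$ is a sum of shifted $H\z$ and $H\z/2$ terms in alternating $q \bmod 4$ patterns. The slices of $KGL$ are $s_qKGL = \Sigma^{2q,q}H\z$. I would compute $\tilde s_q$ of both sides and check that $\eta$ acts on slices exactly as in the Hopf map description, i.e.\ via $\Sq^2$ and the reduction maps. This makes the long exact sequence of slices split into the slices of $kgl$. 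Both $C(\eta)$ and $kgl$ are very effective and slice-complete (convergence holds over fields of finite virtual cohomological dimension, and we can work $2$-completed throughout). Hence a map inducing equivalences on all very effective slices is an equivalence.

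As an independent check, compatible with the later computations, I would verify the result on mod $2$ cohomology. By Isaksen–Shkembi and Ananyevskiy–Röndigs–Østvær we have $H^{**}kq \cong \ca\ddash\ca(1)$ and $H^{**}kgl\cong \ca\ddash\ce(1)$. The long exact sequence in cohomology then corresponds to the short exact sequence of $\ca(1)$-modules
\[
0\to \Sigma^{2,1}\ca(1)\ddash\ce(1)\to \ca(1)\ddash\ce(1)\to\m_2\to 0,
\]
which is $\ca\otimes_{\ca(1)}$ applied to the cell structure of $C\eta$. This confirms that the identification respects the Steenrod module structure.
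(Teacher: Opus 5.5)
The paper gives no argument for this statement; it quotes it from Ananyevskiy--R\"ondigs--\O stv\ae r, so your outline has to stand on its own. The set-up is fine: the periodic Wood sequence, $kgl=\tilde f_0KGL=f_0KGL$, the unique lifts of $\eta$ and of the forgetful map, and hence a comparison map $\phi\colon C(\eta|_{kq})\to kgl$. The genuine gap is the step meant to show that $\phi$ is an equivalence.

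You rightly note that $\tilde f_0$ is not exact, but you then rely on a ``long exact sequence'' of generalized slices. The functors $\tilde s_q=\operatorname{cofib}(\tilde f_{q+1}\to\tilde f_q)$ have the same defect, even on cofiber sequences of very effective spectra. For example, $\Sigma^{1,1}H\z/2\to 0\to\Sigma^{2,1}H\z/2$ is such a sequence. Since $\tilde f_1(\Sigma^{1,1}H\z/2)=\Sigma^{2,1}\tilde f_0(\Sigma^{-1}H\z/2)=0$, the functor $\tilde s_0$ turns it into $\Sigma^{1,1}H\z/2\to 0\to 0$. The underlying reason is that $\Sigma^{2,1}\mathbb{SH}^{\mathrm{veff}}\subsetneq\Sigma^{1,1}\mathbb{SH}^{\mathrm{veff}}\subsetneq\mathbb{SH}^{\mathrm{veff}}$. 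Thus $\Sigma^{1,1}$ moves the very effective filtration by half a step, and $\tilde s_q(\Sigma^{1,1}kq)$ is not a shift of any $\tilde s_{q'}(kq)$.

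There are two further problems. First, the slices you quote are those of $KQ$, not of $kq$. The slice $s_qKQ$ contains infinitely many summands $\Sigma^{2i+q,q}H\z/2$ with $i<0$ that are not very effective, and you never explain how to get the slices of $kq$ from them. Second, slice convergence over fields of finite virtual cohomological dimension after $2$-completion can give at most a $2$-completed statement over such fields. The proposition is integral and holds over any field of characteristic $\neq 2$.

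What is missing is to work with exact functors and to isolate the two properties of $KQ$ that make the covers behave. Here is a route that works.
\begin{enumerate}
\item The effective cover $f_0$ is exact, so $f_0\Sigma^{1,1}KQ\to f_0KQ\to f_0KGL$ is a cofiber sequence in $\mathbb{SH}^{\mathrm{eff}}$.
\item $\tilde f_0=\tau_{\ge0}\circ f_0$, where $\tau_{\ge 0}$ is truncation for the effective homotopy $t$-structure (connective part $\mathbb{SH}^{\mathrm{veff}}$, detected by the sheaves $\underline{\pi}_{i,0}$). Truncation preserves a cofiber sequence $A\to B\to C$ in $\mathbb{SH}^{\mathrm{eff}}$ exactly when $\underline{\pi}_{0,0}B\to\underline{\pi}_{0,0}C$ is surjective.
\item Here that map is the rank map $\underline{GW}\to\underline{\z}$, which is surjective. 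So $\tilde f_0(\Sigma^{1,1}KQ)\to kq\to kgl$ is a cofiber sequence.
\item It remains to show $\tilde f_0(\Sigma^{1,1}KQ)\simeq\Sigma^{1,1}kq$. Since $\mathbb{SH}^{\mathrm{veff}}\subset\Sigma^{-1,-1}\mathbb{SH}^{\mathrm{veff}}\subset\Sigma^{-2,-1}\mathbb{SH}^{\mathrm{veff}}$, the cover of $KQ$ with respect to the middle category is squeezed between $\tilde f_0KQ$ and $\tilde f_{-1}KQ$. So it suffices that $\tilde f_0KQ\to\tilde f_{-1}KQ$ is an equivalence, i.e.\ that $\tilde s_{-1}KQ\simeq\Sigma^{-8,-4}\tilde s_3KQ$ vanishes. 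This vanishing is part of Bachmann's computation of the generalized slices of Hermitian $K$-theory.
\end{enumerate}
No convergence or completion is needed.

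Your cohomological check can only be a consistency check, and as written it is misstated. Since $\ca(1)\ddash\ce(1)$ has rank two, the correct sequence is $0\to\Sigma^{2,1}\m_2\to\ca(1)\ddash\ce(1)\to\m_2\to0$. This induces $0\to\Sigma^{2,1}\ca\ddash\ca(1)\to\ca\ddash\ce(1)\to\ca\ddash\ca(1)\to0$.
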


Let us recall the classical descriptions of the mod \(2\) cohomology of \(H\z\), \(ku\), and \(ko\) as modules over \(\ca\); see, for example, \cite{adams74}. These descriptions play a fundamental role in computations with the classical Adams spectral sequence.

\begin{proposition}\label{prop: classical description}
There are isomorphisms of $\ca$-modules
\begin{enumerate}
\item $H^*(H\z;\f_2) \cong \ca \ddash \ca(0)$,
\item $H^*(ku;\f_2) \cong \ca \ddash \ce(1)$,
\item $H^*(ko;\f_2) \cong \ca \ddash \ca(1)$.
\end{enumerate}
\end{proposition}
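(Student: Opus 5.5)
The three isomorphisms share one template: construct a map of $\ca$-modules out of $\ca\ddash B$, then prove it is bijective by a dimension count. For each spectrum $E\in\{H\z, ku, ko\}$, the unit class $1\in H^0(E;\f_2)$ gives an $\ca$-module map
\[
\ca \longrightarrow H^*(E;\f_2), \qquad a \longmapsto a\cdot 1 .
\]
The first step is to show that this map kills the left ideal generated by the positive-degree elements of $B$, where $B=\ca(0)$, $\ce(1)$, $\ca(1)$ respectively. Once that holds, the map factors through $\ca\ddash B=\ca\otimes_B\f_2$.

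For $H\z$, the vanishing $\Sq^1(1)=0$ holds because $1$ is the mod $2$ reduction of an integral class, and $\Sq^1$ is the Bockstein. For $ku$ and $ko$, the required vanishings are $Q_0(1)=Q_1(1)=0$, respectively $\Sq^1(1)=\Sq^2(1)=0$. I would obtain these from cell structures in low degrees. For $ko$, the $3$-skeleton of $ko$ is the $2$-local sphere with the classes $2$ and $\eta$ killed. Equivalently, the relevant attaching maps are detected by $h_0,h_1$, so $\Sq^1$ and $\Sq^2$ on the bottom class must land in zero, since $H^1(ko)=H^2(ko)=0$. The same argument for $ku$ uses $H^1(ku)=0$ and $H^3(ku)=0$, which forces $Q_0(1)=Q_1(1)=0$.

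Next I show the factored map is surjective, and then compare dimensions. For surjectivity, I compute $H^*(E;\f_2)$ independently: for $H\z$ via the Serre spectral sequence of $K(\z,n)$ in the stable range, and for $ku,ko$ via Stong's or Adams' computation using Thom spectra/Bott periodicity. It then suffices to check that $H^*(E)$ is cyclic on the unit class. For dimensions, the Milnor basis gives
\[
(\ca\ddash\ca(0))_* = \f_2[\xi_1^2,\xi_2,\xi_3,\dots],\qquad
(\ca\ddash\ce(1))_*=\f_2[\xi_1^2,\xi_2^2,\xi_3,\dots],\qquad
(\ca\ddash\ca(1))_*=\f_2[\xi_1^4,\xi_2^2,\xi_3,\dots].
\]
Comparing Poincaré series with the independently computed $H^*(E)$, degreewise, forces the surjection to be an isomorphism.

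I expect the main obstacle to be the $ko$ case. There the independent computation of $H^*(ko)$ is genuinely hard. I would first try the approach via the cofiber sequence $\Sigma ko\xrightarrow{\eta}ko\to ku$ together with Wood's theorem $ko\wedge C\eta\simeq ku$. This gives a short exact sequence in cohomology. Since
\[
\ca\ddash\ce(1)\cong \ca\otimes_{\ca(1)}\bigl(\ca(1)\ddash\ce(1)\bigr),
\]
and $\ca(1)\ddash\ce(1)$ is $H^*(C\eta)$, the $ku$ case feeds into the $ko$ case. Alternatively, since this is classical, the cleanest route in the paper is simply to cite \cite{adams74} for these identifications.
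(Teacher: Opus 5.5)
The paper gives no proof of this proposition. It recalls the three identifications as classical with a pointer to \cite{adams74}, so your closing fallback is exactly the paper's route. The self-contained sketch you put in front of it, however, contains a false step and does not supply its key input.

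\textbf{The false step.} It is your justification of $\Sq^1\iota=\Sq^2\iota=0$ for $ko$. The bottom of $ko$ is not ``the $2$-local sphere with $2$ and $\eta$ killed''. That complex, $S^0\cup_2 e^1\cup_\eta e^2$, has $\pi_0=\z/2$, whereas $\pi_0 ko=\z$ and $\pi_1 ko=\z/2\{\eta\}$. The inference you draw also runs backwards: attaching maps detected by $h_0$ and $h_1$ mean precisely that $\Sq^1$ and $\Sq^2$ are \emph{nonzero} on the bottom class.

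The correct reason is that the unit $S\to ko$ is an isomorphism on $\pi_0,\pi_1,\pi_2$ and $\pi_3 ko=0$, so its cofiber is $3$-connected. Hence $H^1(ko)=H^2(ko)=H^3(ko)=0$, and the bottom cells of $ko$ are $S^0\cup_\nu e^4$. For $ku$ the bottom is $S^0\cup_\eta e^2$, so ``the same argument'' does not give $H^3(ku)=0$. You need an extra input, such as $ku\simeq ko\wedge C\eta$ together with K\"unneth.

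\textbf{The missing input.} It is surjectivity. Saying ``it suffices to check that $H^*(E)$ is cyclic on the unit class'' restates what is to be proved. If you import Stong's computation of $H^*(ku)$ or $H^*(ko)$, you have imported the $\ca$-module structure itself, so the Poincar\'e-series comparison does no work. For $H\z$ this is harmless: Serre's admissible-monomial basis in the stable range is literally the basis $\{\Sq^I\iota : I\ \text{admissible},\ \text{last entry}\geq 2\}$ of $\ca\ddash\ca(0)$. For $ku$ you are simply citing.

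\textbf{What does work.} The Wood reduction is the one genuine argument in your sketch, and it is stronger than you indicate.
\begin{itemize}
\item Multiplication by $\eta$ on $ko$ equals $\eta\wedge 1_{ko}$ with $\eta\in\pi_1 S$, so by K\"unneth it induces zero on mod $2$ cohomology.
\item The cofiber sequence $\Sigma ko\xrightarrow{\eta}ko\to ku$ therefore yields a short exact sequence $0\to\Sigma^2H^*(ko)\to H^*(ku)\to H^*(ko)\to 0$.
\item Complexification sends $\iota_{ku}$ to $\iota_{ko}$, so cyclicity of $H^*(ku)$ passes to $H^*(ko)$. This also gives $H^1(ko)=H^2(ko)=0$ without any cell-structure argument.
\item The Poincar\'e series $P_{ko}=P_{ku}/(1+t^2)$ matches that of $\ca\ddash\ca(1)$, because $\ca$ is free over $\ca(1)$ and $\ca(1)\ddash\ce(1)=\f_2\{1,\Sq^2\}$.
\end{itemize}
So, granting Wood's theorem, item $(3)$ follows from item $(2)$ with no further computation. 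Item $(2)$ itself remains a citation, just as in the paper.
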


In the motivic setting, analogous descriptions hold for the motivic spectra $H\z^F$, $kgl^F$, and $kq^F$ described in the following proposition for the base fields $F=\c$ and $F=\r$.
\begin{proposition}\label{prop: motivic description}
Let $F=\c$ or $\r$. There are isomorphisms of $\ca^F$-modules
\begin{enumerate}
\item $H^{**}(H\z^F;\m_2^F) \cong \ca^F \ddash \ca^F(0)$,
\item $H^{**}(kgl^{F};\m_2^F) \cong \ca^{F} \ddash \ce^{F}(1)$,
\item $H^{**}(kq^{F};\m_2^F) \cong \ca^{F} \ddash \ca^{F}(1)$.
\end{enumerate}
The case $F=\c$ is proven in \cite[Thm.~5.7]{IS11}, while the case $F=\r$ is described in \cite[Section~2]{ARO17}.
\end{proposition}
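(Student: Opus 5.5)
The proposition is a collection of citations, so the plan is to explain how each identification follows from the structure of the spectra. For $F=\c$ the statement is \cite[Thm.~5.7]{IS11}; for $F=\r$ it is recorded in \cite[Section~2]{ARO17}. The common strategy has three steps: construct a class in degree zero, use it to get a map of $\ca^F$-modules $\ca^F\ddash B\to H^{**}(E)$, and show this map is an isomorphism by a dimension count. Here $E$ is $H\z^F$, $kgl^F$ or $kq^F$, and $B$ is $\ca^F(0)$, $\ce^F(1)$ or $\ca^F(1)$ respectively.

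For the first step, each $E$ is very effective and its zeroth slice surjects to $H\z^F$. So the unit class $\iota\in H^{0,0}(E)$ is obtained by pulling back the generator along $E\to H\z^F\to H\f_2^F$. For the second step I would verify that $B$ annihilates $\iota$.
\begin{enumerate}
\item For $H\z^F$: the vanishing $\Sq^1\iota=0$ follows because $\iota$ is the reduction of an integral class.
\item For $kgl^F$: the vanishing of $Q_0$ holds for the same reason. For $Q_1$, the cofiber sequence $\Sigma^{2,1}kgl^F\xrightarrow{\beta}kgl^F\to H\z^F$ shows that $kgl^F$ has no cohomology in the bidegree of $Q_1\iota$ to receive a nonzero class.
\item For $kq^F$: $\Sq^2\iota=0$ follows from the cofiber sequence $\Sigma^{1,1}kq^F\xrightarrow{\eta}kq^F\to kgl^F$ of \cite[Proposition 2.11]{ARO17}, together with a degree argument in low bidegrees.
\end{enumerate}
Over $\r$ one must also check that the $\rho$-terms in the Adem relations and the nontrivial action of $\ca^\r$ on $\m_2^\r$ do not interfere. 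This works because $B$ is an $\m_2^F$-subalgebra, so $\ca^F\otimes_B\m_2^F$ is still a well-defined cyclic module.

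The third step is to show the induced map $\ca^F\ddash B\to H^{**}(E)$ is an isomorphism. I would compare bigraded ranks over $\m_2^F$ by induction along the slice (or cofiber) sequences:
\begin{itemize}
\item $kq^F\to kgl^F$ along $\eta$;
\item $kgl^F\to H\z^F$ along the Bott element;
\item $H\z^F\to H\f_2^F$ along $2$.
\end{itemize}
Each sequence gives a long exact sequence in cohomology. On the algebraic side this matches the corresponding short exact sequence of quotient modules, for instance $0\to\Sigma^{1,1}\ca^F\ddash\ca^F(1)\to\ca^F\ddash\ce^F(1)\to\ca^F\ddash\ca^F(1)\to 0$, which realizes the Koszul-type relation between $\ce^F(1)$ and $\ca^F(1)$.

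I expect the main obstacle to be this third step, specifically showing that the long exact sequences split into short exact sequences, i.e.\ that the connecting maps vanish. Over $\c$ I would argue by $\tau$-inversion or Betti realization: after inverting $\tau$ the map becomes the classical isomorphism of \cref{prop: classical description}, and freeness over $\f_2[\tau]$ then upgrades this to an integral isomorphism. Over $\r$ I would instead invert $\rho$ or use a $\rho$-Bockstein argument together with the Betti realization to $C_2$-equivariant spectra. This is exactly where the cited works do the heavy lifting, so in the paper I would invoke \cite{IS11} and \cite{ARO17} at this point rather than reprove it.
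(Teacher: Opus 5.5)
The paper gives no argument for this proposition beyond citing \cite[Thm.~5.7]{IS11} and \cite[Section~2]{ARO17}. You hand the decisive step to the same references, so as a justification your proposal stands on the same footing as the paper's. The outline you wrap around the citation, however, contains errors.

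The shift in your short exact sequence is wrong. It should read $0\to\Sigma^{2,1}\ca^F\ddash\ca^F(1)\to\ca^F\ddash\ce^F(1)\to\ca^F\ddash\ca^F(1)\to 0$, induced from $0\to\Sigma^{2,1}\m_2^F\to\ca^F(1)\ddash\ce^F(1)\to\m_2^F\to 0$ (two cells joined by $\Sq^2$, as in \cref{cor: a1}). Topologically, the kernel is $\partial^*H^{**}(\Sigma^{2,1}kq^F)$ for the boundary map $\partial\colon kgl^F\to\Sigma^{2,1}kq^F$; it is not built from the source $\Sigma^{1,1}kq^F$ of $\eta$. With $\Sigma^{1,1}$ no such injection exists even over $\c$, since $\ca^{\c}\ddash\ce^{\c}(1)$ vanishes in topological degree $1$. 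Also, $\tau$-freeness of $\ca^{\c}\ddash B$ only shows that a map which becomes an isomorphism after inverting $\tau$ is injective. Its cokernel may be nonzero $\tau$-torsion, so surjectivity needs a separate argument.

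More substantively, your second step is not a degree argument. Using the already known $H^{**}(H\z^F)$, the long exact sequences give $H^{1,0}(kgl^F)=0=H^{1,0}(kq^F)$, and hence
\begin{gather*}
H^{3,1}(kgl^F)\cong\operatorname{coker}\bigl(H^{0,0}(kgl^F)\xrightarrow{\delta}H^{3,1}(H\z^F)=\f_2\{\Sq^3\iota\}\bigr),\\
H^{2,1}(kq^F)\cong\operatorname{coker}\bigl(H^{0,0}(kq^F)\xrightarrow{\delta}H^{2,1}(kgl^F)=\f_2\{\Sq^2\iota\}\bigr).
\end{gather*}
So $Q_1\iota=0$ and $\Sq^2\iota=0$ are each equivalent to $\delta(\iota)\neq 0$, i.e.\ to nontriviality of the bottom $k$-invariant. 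The shape of the cofiber sequence cannot supply this. For example, $Y=\bigvee_{k\geq 0}\Sigma^{2k,k}H\z^F$ sits in a cofiber sequence $\Sigma^{2,1}Y\to Y\to H\z^F$, yet $Q_1\iota_Y\neq 0$. This is exactly the point you defer in your last step. There, what must be shown is that $\beta^*$ and $\eta^*$ vanish on mod $2$ cohomology, i.e.\ that the boundary maps are injective on cohomology, and $\delta(\iota)\neq 0$ is its first instance.

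It is also the only non-formal input. Over $\c$, Betti realization carries the two sequences to $\Sigma^2ku\to ku\to H\z$ and $\Sigma ko\xrightarrow{\eta}ko\to ku$. There, $H^3(ku)=0=H^2(ko)$ from \cref{prop: classical description} forces the classical boundary classes to be $\Sq^3\iota$ and $\Sq^2\iota$. Over $\r$, base change to $\c$ gives the same conclusion.

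Given that, induct on topological degree. Since $\delta$ is $\ca^F$-linear, on classes $\theta\iota$ it is right multiplication by $Q_1$, respectively $\Sq^2$. These give maps $\Sigma^{3,1}\ca^F\ddash\ce^F(1)\to\ca^F\ddash\ca^F(0)$ and $\Sigma^{2,1}\ca^F\ddash\ca^F(1)\to\ca^F\ddash\ce^F(1)$. Both are injective by the flatness of $\ca^F$ over these subalgebras, the same flatness that underlies the paper's change-of-rings isomorphisms. The long exact sequences then split degree by degree, $H^{**}(E)$ is cyclic on $\iota$, and the comparison map is an isomorphism. For $H\z^F$ nothing extra is needed, since $2^*=0$ on mod $2$ cohomology. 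No $\rho$-inversion or $C_2$-equivariant input is required.
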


\subsection{Motivic classifying space and its mod two motivic cohomology}\label{sec:coho}

As our goal is to compute motivic homology theories of motivic analogues of classifying spaces, we briefly recall the notion of the motivic classifying space used throughout this paper. We work with the \emph{geometric motivic classifying spaces}
 $B_{\mathrm{gm}}G$
constructed by Morel and Voevodsky \cite{MV99}. For simplicity, we write $BG$ for $B_{\mathrm{gm}}G$.  It is the motivic analogue of the topological classifying space: just as $BG=EG/G$, it is constructed as the quotient of a motivically contractible space by a free $G$-action.

Let $\mu_{2^n}$ denotes the group scheme of $2^n$-th roots of unity. It is defined by
\[
\mu_{2^n} := \ker\!\left(\mathbb{G}_m \xrightarrow{(\,\cdot\,)^{2^n}} \mathbb{G}_m\right), \text{ see for example,\cite[\S 6]{Voe03}}).
\]
Over algebraically closed fields, this coincides with the cyclic group of order $2^n$, i.e.\ $BC_{2^n}$.
The motivic classifying spaces of these group schemes have many nice and similar formal properties as the classical topological classifying spaces; see, for example, \cite{Tot14}.

Moreover, there is a realization functor from $\c$-motivic spectra to classical spectra, characterized by preserving weak equivalences and homotopy colimits and by sending the motivic suspension spectrum of a smooth scheme $X$ to the suspension spectrum of $X(\c)$. In particular, it carries $B\mu_{2^n}$ to the topological classifying space $BC_{2^n}$.

This realization functor is symmetric monoidal \cite{HO16} and induces natural maps
\[
H^{p,q}(X) \longrightarrow H^{p}(X(\c)),
\]
where the target is viewed as a bigraded object concentrated in weight $0$, as described in \cite{DI10}. After standard technical adjustments, these maps extend to a comparison between the $\c$-motivic Adams spectral sequence and the classical Adams spectral sequence, which we will use repeatedly throughout the paper.

\vspace{-0.3cm}
\begin{figure}[ht]
\hspace{-2.6em}
 \begin{minipage}{5.4in}
 For our computations, we care about the motivic cellular structure of these spaces. Analogous to the classical CW-structure, $B\mu_{2^n}$ admits a motivic cell structure with one cell, denoted by $x_i$, in each bidegree
 \[
(i, \lfloor \tfrac{i+1}{2} \rfloor), \quad i \ge 0.
\]
 The attaching maps are similarly structured: each even-dimensional
 cell is attached to the preceding odd-dimensional cell via a map of
 degree $h^n$. Here $h\in \pi_{0,0}$ denotes a class detected by $h_0$ in the motivic Adams spectral
 sequence, so $h^n$ is detected by $h_0^n$. Over $\c$, multiplication by $h$ agrees with the degree-$2$ map. Over $\r$, however, it is given by $2-\rho\eta$, see \cite[Definition~8.1]{DI16a}. Each cell in dimension
$i\equiv 0,1\pmod 4$ is attached to the preceding cell in dimension
$i\equiv 2,3\pmod 4$ via multiplication by the Hopf map $\eta$, which
is detected by $\Sq^2$. There are higher attaching maps as well, but we
will not need them, since they are not detected over
$\ca(0)$, $\ce(1)$, or $\ca(1)$, although they may be detected over the whole Steenrod algebra $\ca$.

\end{minipage} \hspace{0.08em}
\begin{minipage}{0.1in}
	\includegraphics[scale=0.6]{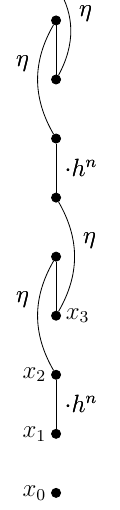}
\end{minipage}
\label{fig:cellstructure}
\end{figure}
\vspace{-0.2cm}

Recall,
\begin{proposition}\label{prop: classicala0e1a1} The mod--$2$ cohomology of the classifying space of $C_{2^n}$, $n\geq 2$ is given by
\[
H^*(BC_{2^n}; \f_2) \cong \f_2[x,y]/(x^2),
\]
where the degrees are $|x| = 1$ and $|y| = 2$ (see for example \cite[Proposition~4.5.1]{CTVEZ03}). 
The non-trivial action of the subalgebra $\ca(1)$ is completely determined by
\( \Sq^2(y) = y^2, \;\)
together with the Cartan formula and the Adem relations. 
\end{proposition}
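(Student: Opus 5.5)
The ring structure $H^*(BC_{2^n};\f_2)\cong \f_2[x,y]/(x^2)$ for $n\ge 2$ will be taken from the standard computation. To recall why it holds, use the fibration $BC_{2^n}\to BS^1\to BS^1$ induced by $z\mapsto z^{2^n}$, or equivalently the Gysin sequence of the circle bundle $S^1\to BC_{2^n}\to \mathbb{C}P^\infty$ with Euler class $2^n y$. With $\f_2$ coefficients the Euler class vanishes, so the Gysin sequence splits into short exact sequences. These give additively $H^*(BC_{2^n};\f_2)\cong \f_2[y]\otimes\Lambda(x)$, with $y$ pulled back from $\mathbb{C}P^\infty$. For $x^2$, use graded commutativity: $x^2=\Sq^1 x$ since $|x|=1$. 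We have $\Sq^1 x=\beta(x)$, the mod 2 reduction of the integral Bockstein. Integrally $H^2(BC_{2^n};\z)\cong\z/2^n$, and the Bockstein $\delta$ for $0\to\z\xrightarrow{2}\z\to\f_2\to 0$ sends $x$, the reduction of a generator of $\operatorname{Hom}(C_{2^n},\z/2)$, to $2^{n-1}$ times a generator of $\z/2^n$. That class reduces to $0$ mod 2 precisely when $n\ge 2$. Hence $\Sq^1x=0$, so $x^2=0$.

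Next, determine the $\ca(1)$-action. By the Cartan formula it suffices to compute $\Sq^1$ and $\Sq^2$ on the generators $x$ and $y$. We already have $\Sq^1 x=0$, and $\Sq^2 x=0$ because $|x|=1$. The class $y$ is the reduction of an integral class, namely the pullback of the integral generator of $\mathbb{C}P^\infty$, so $\Sq^1 y=0$. Since $|y|=2$, the unstable condition gives $\Sq^2 y=y^2$. Applying the Cartan formula then gives
\[
\Sq^1(xy^k)=0,\qquad \Sq^2(y^k)=k\,y^{k+1},\qquad \Sq^2(xy^k)=k\,xy^{k+1}.
\]
In particular $\Sq^1$ acts trivially on everything, and this is the feature contrasted with the case $n=1$. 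The Adem relations, for instance $Q_1=\Sq^1\Sq^2+\Sq^2\Sq^1$, then determine the rest of the $\ca(1)$-action.

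The only delicate step is $\Sq^1 x=0$. It uses $n\ge2$ in an essential way, through the Bockstein computation above. A second way to see it is the cell structure: the $2$-cell is attached by the degree-$2^n$ map, and $2^n$ is invisible to $\Sq^1$ when $n\ge 2$.
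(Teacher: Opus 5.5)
Your proof is correct and follows the paper's reasoning. For this classical statement the paper only cites \cite{CTVEZ03} for the ring structure; your Gysin-sequence derivation is a fine substitute. In its proof of the motivic analogue, the paper determines the action along the same lines as you: the Bockstein comparison $\delta_2(x)=2^{n-1}v$ kills $\Sq^1x$ for $n\geq 2$, $\Sq^1y=0$ because $y$ lifts to an integral (Euler) class, $\Sq^2x=0$ and $\Sq^2y=y^2$ are the instability statements, and the Cartan formula does the rest. One wording fix: $x^2=\Sq^1x$ comes from the instability axiom $\Sq^{|x|}x=x^2$, not from graded commutativity, which says nothing mod $2$.
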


\begin{remark} $H^{*}(BC_{2^n})$ contains a single copy of $\f_2$ in degree $0$, corresponding to the 0-cell in the cell decomposition. So, for convenience we work with \textbf{reduced mod 2 cohomology} which we still denote by $H^{*}(BC_{2^n})$.
\end{remark}

\begin{corollary}\label{cor: classicala0e1}
As a module over \(\ca(0)\) and \(\ce(1)\),
\[
H^{*}(BC_{2^n}) \cong \bigoplus_{i \geq 1} \Sigma^{i} \f_2 \cong \bigoplus_{i \geq 1} \f_2\{x_i\}.
\]
\end{corollary}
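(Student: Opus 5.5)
The plan is to show that $Q_0=\Sq^1$ and $Q_1=\Sq^1\Sq^2+\Sq^2\Sq^1$ act trivially on the reduced cohomology $H^*(BC_{2^n})$. Once this is done, any $\f_2$-basis of $H^*(BC_{2^n})$ splits it into one-dimensional trivial modules over $\ca(0)$ and $\ce(1)$. By \cref{prop: classicala0e1a1}, the basis is $\{y^k, xy^k\}$. Set $x_{2k}=y^k$ for $k\ge 1$ and $x_{2k+1}=xy^k$ for $k\ge 0$. This gives exactly one class $x_i$ in each degree $i\ge 1$, matching the cell structure. So the corollary reduces to checking that $\Sq^1$ and $Q_1$ vanish on every basis element.

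For $\Sq^1$, we use that \cref{prop: classicala0e1a1} lists $\Sq^2(y)=y^2$ as the only nontrivial generating action. Hence $\Sq^1 x=0$; this also holds for degree reasons, since $\Sq^1 x = x^2 = 0$. Likewise $\Sq^1 y=0$. This is the substantive input: for $n\ge 2$, $y$ is the reduction of an integral class and the Bockstein connecting $x$ and $y$ is the $n$-th Bockstein, not $\Sq^1$. Since $\Sq^1$ is a derivation (Cartan formula), it kills every monomial $x^\epsilon y^k$. Thus $\Sq^1$ acts trivially.

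For $Q_1$, a clean route is that $Q_1$ is a primitive in the Steenrod algebra, so it is also a derivation. It then suffices to check generators. In degree $1$, $\Sq^2 x=0$ and $\Sq^1 x=0$, so $Q_1 x=0$. For $y$ we have $Q_1 y=\Sq^1\Sq^2 y+\Sq^2\Sq^1 y=\Sq^1(y^2)+0=2y\Sq^1 y=0$. Therefore $Q_1$ vanishes on all of $H^*(BC_{2^n})$.

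If one prefers to avoid the fact that $Q_1$ is a derivation, an alternative is to compute $\Sq^2(xy^k)$ and $\Sq^2(y^k)$ directly via the Cartan formula, obtaining $k\,xy^{k+1}$ and $k\,y^{k+1}$. Applying $\Sq^1$ to these gives zero, and $\Sq^2\Sq^1$ vanishes because $\Sq^1$ is already zero. Either way, the only step with real content is the vanishing of $\Sq^1$ on $y$, which is exactly the $n\ge 2$ input supplied by \cref{prop: classicala0e1a1}; the rest is bookkeeping with the Cartan formula.
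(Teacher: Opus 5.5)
Your proof is correct and follows the paper's argument: $\Sq^1$ vanishes on $x$ and $y$, so $\ca(0)$ and $\ce(1)$ act trivially, and one then relabels the monomial basis as the classes $x_i$. You make explicit the steps the paper leaves implicit, namely that $\Sq^1$ is a derivation and that $Q_1=\Sq^1\Sq^2+\Sq^2\Sq^1$ consequently vanishes; your indexing $x_{2k+1}=xy^k$ for $k\ge 0$ also correctly includes $x_1=x$.
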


\begin{proof}
Since \(\Sq^1(x)=0\) and \(\Sq^1(y)=0\) by the above proposition, the actions of \(\ca(0)\) and \(\ce(1)\) are trivial, giving the claimed module structure. For convenience in computations, we relabel \(xy^{i} = x_{2i+1}\) and \(y^{i} = x_{2i}\) for \(i \ge 1\).
\end{proof}

\begin{corollary}\label{cor: classicala1}
As a module over $\ca(1)$, 
\[ H^{*}(BC_{2^n}) \cong \Sigma \f_2 \;\bigoplus_{i \ge 1} \Big( \Sigma^{4i+2}\ca(1)\ddash\ce(1) \;\oplus\; \Sigma^{4i+3}\ca(1)\ddash\ce(1)\Big),\]
where $\ca(1) \ddash \ce(1)$ is isomorphic to $\f_2 \oplus \Sigma^2 \f_2$ connected with $\Sq^2$.
\end{corollary}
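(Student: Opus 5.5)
The plan is to compute the full $\ca(1)$-action on the basis $x_i$ (with $x_{2i}=y^i$, $x_{2i+1}=xy^i$) from \cref{prop: classicala0e1a1}, and then read off the splitting. Since $\ca(1)$ is generated by $\Sq^1$ and $\Sq^2$, and $\Sq^1$ acts trivially by \cref{cor: classicala0e1}, the module structure is determined entirely by $\Sq^2$ on the basis. First I will compute $\Sq^2(y^i)$ with the Cartan formula. Since $\Sq^1 y=0$, we have $\Sq(y)=y+y^2$, and multiplicativity gives $\Sq(y^i)=(y+y^2)^i=y^i(1+y)^i$. Hence
\[
\Sq^2(y^i)=i\,y^{i+1}.
\]
For the odd classes, $\Sq(x)=x$ because $x$ has degree $1$ and $\Sq^1x=0$. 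So $\Sq^2(xy^i)=x\Sq^2(y^i)=i\,xy^{i+1}$. In terms of the relabeled basis, $\Sq^2 x_{2i}=x_{2i+2}$ and $\Sq^2 x_{2i+1}=x_{2i+3}$ exactly when $i$ is odd, and $\Sq^2$ is zero on these classes when $i$ is even.

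Next I will group the basis into $\Sq^2$-orbits. Take $i=2k+1$ with $k\ge 0$. Then $x_{4k+2}\mapsto x_{4k+4}$ and $x_{4k+3}\mapsto x_{4k+5}$, while $\Sq^2$ kills the targets $x_{4k+4}$ and $x_{4k+5}$, since there the exponent $2k+2$ is even. Every class $x_j$ with $j\ge 2$ then lies in exactly one pair:
\begin{itemize}
\item for $j\equiv 2,3 \pmod 4$, it is the bottom of a pair;
\item for $j\equiv 0,1 \pmod 4$ with $j\ge 4$, it is the top of a pair.
\end{itemize}
The only class left over is $x_1=x$, which generates a trivial summand $\Sigma\f_2$.

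Each pair $\{x_m, x_{m+2}\}$ joined by a nonzero $\Sq^2$, with $\Sq^1$ acting trivially, is isomorphic to $\Sigma^m \ca(1)\ddash\ce(1)$. To see this, note that $\ca(1)\ddash\ce(1)$ has $\f_2$-basis $\{1,\Sq^2\}$: $\ca(1)$ has dimension $8$ and $\ce(1)$ has dimension $4$. In the quotient, $\Sq^1$ and $\Sq^2\Sq^2=\Sq^3\Sq^1$ vanish, and $\Sq^1\Sq^2=Q_1+\Sq^2\Sq^1$ also vanishes. So the obvious map $\Sigma^m\ca(1)\ddash\ce(1)\to \f_2\{x_m,x_{m+2}\}$ sending $1\mapsto x_m$ is a well-defined isomorphism of $\ca(1)$-modules. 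The key check here is that $Q_0$ and $Q_1$ kill $x_m$, which holds because both are built from $\Sq^1$, which acts trivially. Summing over $k$ gives summands in degrees $4k+2$ and $4k+3$.

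Summing these pieces gives the direct sum decomposition. One indexing point needs care: the stated formula runs over $i\ge1$ with shifts $4i+2$ and $4i+3$, whereas my computation gives bottoms at $4k+2$ and $4k+3$ for $k\ge0$, including $x_2,x_3$. I expect the main obstacle to be this bookkeeping, namely matching the indexing convention and confirming that the degree $2,3$ pair is included; mathematically it is a direct Cartan-formula computation. I would read the sum as starting at the pair $\{x_2,x_4\},\{x_3,x_5\}$, which amounts to reindexing to $i\ge 0$.
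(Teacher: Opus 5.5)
Your proposal is correct and follows the same route as the paper. The paper's proof just says that $\Sq^2(y)=y^2$, together with the Cartan formula, splits $H^*(BC_{2^n})$ into shifted copies of $\ca(1)\ddash\ce(1)$. Your computation $\Sq^2(y^i)=i\,y^{i+1}$ and $\Sq^2(xy^i)=i\,xy^{i+1}$, and your identification of each pair through the induced-module property, supply exactly the details the paper omits. Your reindexing to include the pairs starting in degrees $2$ and $3$ is also right: when the paper applies this corollary in the $ko$ computation it writes $\bigoplus_{k\ge 0}$ with shifts $4k+2$ and $4k+3$, so the $i\ge 1$ in the statement is a typo.
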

\begin{proof}
Since \(\Sq^2(y)=y^2\) is the only nontrivial action on the generators, the action of \(\ca(1)\) decomposes \(H^{*}(BC_{2^n})\) into shifted copies of the submodule \(\ca(1) \ddash \ce(1)\). 
\end{proof}

Analogously, 
\begin{thm}
The $F$-motivic cohomology of $B\mu_{2^n}$ has module generators $x$ and $y$ in bidegrees $(1,1)$ and $(2,1)$ respectively and the action of $\ca(1)$ on generators is determined by
\[
\Sq^1(x) = 0 = \Sq^1(y), \qquad
\Sq^2(x) = 0, \qquad
\Sq^2(y) = y^2.
\]
Together with the Cartan formula and the Adem relations, this determines the  $H^{**}(B\mu_{2^n}; \f_2)$ as a module over the motivic Steenrod algebra $\ca(1)$.
\end{thm}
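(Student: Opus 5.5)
The plan is to start from Voevodsky's computation of the motivic cohomology of $B\mu_\ell$ \cite[\S 6]{Voe03}. It gives, for $\ell = 2^n$,
\[
H^{**}(B\mu_{2^n};\f_2) \cong \m_2^F[x,y]/(x^2 - a\,y), \qquad |x|=(1,1),\ |y|=(2,1),
\]
where $a$ is an element of $\m_2^F$ in bidegree $(0,1)$. For $\ell=2$ one has $a=\tau$ (together with a $\rho x$ term over $\r$). For $n\ge 2$ the relevant coefficient vanishes, because it is the mod $2$ reduction of an element divisible by $2^{n-1}$. This gives $x^2 = 0$ and identifies $H^{**}(B\mu_{2^n})$ as a free $\m_2^F$-module on the classes $y^i$ and $xy^i$. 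These classes sit in the bidegrees $(i,\lfloor (i+1)/2\rfloor)$ of the cells $x_i$, in agreement with the cell structure recalled above. This yields the module generators $x$ and $y$.

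For the operations I will use the geometric origin of the classes. The class $y$ is the first Chern class of the line bundle associated to the tautological character $\mu_{2^n}\to \mathbb{G}_m$, pulled back from $B\mathbb{G}_m$. The class $x$ is the image of $y$ under the Bockstein for the coefficient sequence $\z/2 \to \z/2^{n+1}\to \z/2^n$, or equivalently the class coming from the Kummer/$\mu_{2^n}$-torsor. Three facts then follow.
\begin{enumerate}
\item In $H^{**}(B\mathbb{G}_m)=\m_2^F[c]$ the instability/Chern class formulas give $\Sq^1 c = 0$ (by weight and degree: $\Sq^1 c$ would lie in $H^{3,1}$, which vanishes for $B\mathbb{G}_m$) and $\Sq^2 c = c^2$. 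Naturality then gives $\Sq^1 y=0$ and $\Sq^2 y = y^2$.
\item $\Sq^1$ is the mod $2$ reduction of the integral Bockstein, and $x$ is the reduction of a class of order $2^n$, so $\Sq^1 x$ is the reduction of the $2$-Bockstein of $x$. Since $x$ lifts to $\z/2^n$ coefficients with $n\ge 2$, the first Bockstein vanishes, giving $\Sq^1 x=0$. The Bockstein relating $x$ to $y$ is the $n$-th higher Bockstein $\beta_n$, not $\Sq^1$. This is exactly where $n\ge2$ enters, and it explains why the $n=1$ formula $\Sq^1 x = y$ fails here.
\item $\Sq^2 x=0$ by instability. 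Motivically, $\Sq^{2i}$ vanishes on classes of bidegree $(p,q)$ with $i>p-q$, and $x$ has $p-q = 0$.
\end{enumerate}

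Finally, the action on all of $H^{**}$ is determined by these values via the motivic Cartan formula
\[
\Sq^2(ab)=\Sq^2a\cdot b+\tau\Sq^1a\,\Sq^1b+a\,\Sq^2 b
\]
(with additional $\rho$-terms over $\r$) together with the action on $\m_2^F$ ($\Sq^1\tau=\rho$ over $\r$). Since $\Sq^1$ kills both generators, the $\tau\Sq^1\Sq^1$ cross terms vanish. Together with the Adem relations, which express every element of $\ca(1)$ through $\Sq^1$ and $\Sq^2$, this determines the full $\ca(1)$-module structure.

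I expect the main obstacle to be the vanishing of the coefficient in the relation $x^2 = a\,y$ over $\r$, where $\rho$ and $\tau$ terms could a priori survive. I would handle this by naturality along the map $B\mu_{2^n}\to B\mu_2$ induced by $\zeta\mapsto\zeta^{2^{n-1}}$. This map sends $y_1\mapsto 2^{n-1}y = 0$ and is compatible with the Kummer classes. Alternatively, I would compare with complex and real Betti realization, where $H^*(BC_{2^n})$ has $x^2=0$ by \cref{prop: classicala0e1a1}. Since the relevant bigraded groups are detected after realization (they are free over $\m_2^F$ in the appropriate bidegrees), this forces the relation.
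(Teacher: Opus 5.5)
Your treatment of the operations is essentially the paper's proof. You start from Voevodsky's computation in \cite[\S 6]{Voe03} and take $x,y$ to be the mod $2$ reductions of $u\in H^{1,1}(B\mu_{2^n};\z/2^n)$ and of the Euler class $v$. You get $\Sq^1x=0$ because $x$ lifts to $\z/2^n$ (hence $\z/4$) coefficients, which is equivalent to the paper's diagram giving $\delta_2(x)=2^{n-1}v$. The remaining values come from Voevodsky's Lemmas 9.8--9.9 and the integral lift of $y$; your pullback from $B\mathbb{G}_m$ is the same point. That part is fine. Note also that $\Sq^2x$ and $\Sq^1y$ both land in $H^{3,1}(B\mu_{2^n};\f_2)=0$, so they vanish for degree reasons alone.

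The real problem is your ring-theoretic claim $x^2=0$ and the ``main obstacle'' paragraph built around it: over $\r$ this is false. For any $w\in H^{1,1}(X;\z/2)$ one has $w^2=\tau\,\Sq^1(w)+\rho\,w$. This is the source of $u^2=\tau v+\rho u$ on $B\mu_2$; on a field it is the identity $\{a\}\{a\}=\{-1\}\{a\}$ in Milnor $K$-theory. Since $\Sq^1x=0$, you get $x^2=\rho x$. This is nonzero over $\r$, because $H^{**}(B\mu_{2^n})$ is free over $\m_2^\r$ on $\{y^i,xy^i\}$. Re-choosing the generator does not help: $H^{1,1}=\f_2\{x,\rho\}$ and $(x+\rho)^2=\rho(x+\rho)$.

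Your own proposed fix confirms this. Pulling back $x_1^2=\tau y_1+\rho x_1$ along $B\mu_{2^n}\to B\mu_2$ gives $\tau\cdot 0+\rho x$, not $0$. The Betti-realization route cannot decide the question either, since complex realization kills $\rho$ and so cannot distinguish $0$ from $\rho x$.

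Fortunately the statement never uses $x^2$. The free basis $\{y^i,xy^i\}$ comes from Voevodsky's computation (or the Gysin sequence) regardless of the relation. The Cartan formula on $xy^i$ only involves $\Sq^k x$, $\Sq^k(y^i)$ and products of the form $y^iy^j$ and $xy^j$. So simply delete the claim; over $\c$ it is true, since $\rho=0$ there.

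Two smaller corrections. First, $x$ is not the Bockstein image of $y$: Bocksteins raise cohomological degree. For the sequence $\z/2\to\z/2^{n+1}\to\z/2^n$ it is $y$ that is the Bockstein of the $\z/2^n$-class $u$, i.e.\ the mod $2$ reduction of $\delta_{2^n}(u)=v$, as you correctly use in your item 2. Second, the instability statement you quote needs the additional hypothesis $i\ge q$ from Voevodsky's Lemma 9.9. Without it the statement is false: for example $\Sq^2(\tau^2)=\tau\rho^2\neq 0$ over $\r$. For $x$ in bidegree $(1,1)$ both hypotheses hold, so your conclusion $\Sq^2x=0$ stands.
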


\begin{proof}
By \cite[Proposition~6.6]{Voe03}, the $F$-motivic cohomology of $B\mu_{2^n}$ with $\z/{2^n}$-coefficients admits an $\m_2^F$-basis
\[
\{\, v^i,\; u v^i \mid i \ge 0 \,\}.
\]
Here $v \in H^{2,1}(B\mu_{2^n};\z)$ is the Euler class of the line bundle over $B\mu_{2^n}$ associated to the tautological representation of $\mu_{2^n}$, and $u \in H^{1,1}(B\mu_{2^n};\z/{2^n})$ is the unique class characterized in \cite[Lemma~6.5]{Voe03} by the condition
\[
\delta_{2^n}(u) = v.
\]
The connecting homomorphism
\[
\delta_{2^n} \colon H^{1,1}(B\mu_{2^n};\z/{2^n})
\longrightarrow H^{2,1}(B\mu_{2^n};\z)
\]
is induced by the short exact sequence
\[
0 \longrightarrow \z \xrightarrow{2^n} \z \longrightarrow \z/{2^n} \longrightarrow 0.
\]

Let $x \in H^{1,1}(B\mu_{2^n};\f_2)$ and $y \in H^{2,1}(B\mu_{2^n};\f_2)$ denote the reductions of $u$ and $v$ mod $2$, respectively.  
The action of $\Sq^1$ on $x$ can be determined via the naturality of connecting homomorphisms with respect to coefficient reduction, which fits into the commutative diagram
\[
\begin{tikzcd}[column sep=5em, row sep=4em]
H^{1,1}(B\mu_{2^n};\z/{2^n})
  \arrow[r, "\delta_{2^n}\; (u \; \mapsto \; v)"]
  \arrow[d, "\text{mod }2"']
&
H^{2,1}(B\mu_{2^n};\z)
  \arrow[d, "2^{n-1}"]
\\
H^{1,1}(B\mu_{2^n};\f_2)
  \arrow[r, "\delta_2"']
&
H^{2,1}(B\mu_{2^n};\z).
\end{tikzcd}
\]

Commutativity implies that $\delta_2(x) = 2^{n-1}v$, and since $n \geq 2$, reducing modulo $2$ yields
\[
\Sq^1(x) = 0 \in H^{2,1}(B\mu_{2^n};\f_2),
\]
as claimed.

The remaining relations follow from comparison with the classical topological computation of $H^*(BC_{2^n};\f_2)$, together with Voevodsky’s identification of motivic reduced power operations with their classical counterparts in this setting. In particular,
\[
\Sq^2(y) = y^2 \quad \text{\cite[Lemma~9.8]{Voe03}},
\]
 $\Sq^2(x)=0$ \cite[Lemma~9.9]{Voe03} and $\Sq^1(y)=0$, since it lifts to the Euler class $v$. This further determines the full $\ca(1)$-action using the Cartan formula and the Adem relations.
\end{proof}

\hspace{-1.4em}
 \begin{minipage}{5.9in}
\begin{corollary}\label{cor: a0e1}
As a module over $\ca^F(0)$ and $\ce^F(1)$, we have a splitting
\[H^{**}(B\mu_{2^n}) \cong \bigoplus_{i \geq 1} \Sigma^{i,\lfloor \frac{i+1}{2}\rfloor}  \m_2^F \cong \bigoplus_{i \geq 1} \m_2^F{x_{i,\lfloor \frac{i+1}{2}\rfloor}} \]
\end{corollary}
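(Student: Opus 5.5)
The plan mirrors the classical proof of \cref{cor: classicala0e1}. The goal is to show that every element of $\ca^F(0)$ and $\ce^F(1)$ of positive degree acts by zero on the $\m_2^F$-basis of $H^{**}(B\mu_{2^n})$. Once that is known, the module splits as a direct sum of shifted copies of $\m_2^F$, one for each basis element.

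First I fix the basis. By the preceding theorem (Voevodsky's computation reduced mod $2$), $H^{**}(B\mu_{2^n})$ is free over $\m_2^F$ on the classes $y^i$ and $xy^i$. I relabel $x_{2i}=y^i$ in bidegree $(2i,i)$ and $x_{2i+1}=xy^i$ in bidegree $(2i+1,i+1)$. Both cases fit the formula $(j,\lfloor (j+1)/2\rfloor)$, which matches the cell structure.

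Second, I compute $\Sq^1$ on the basis using the motivic Cartan formula. Over $\c$ this is simply $\Sq^1(ab)=\Sq^1(a)b+a\Sq^1(b)$. Over $\r$ there is an extra term: $\Sq^1$ is a derivation, but $\Sq^2$ picks up the correction $\tau\Sq^1(a)\Sq^1(b)$, which only matters for $\Sq^2$. Since $\Sq^1x=\Sq^1y=0$, induction on $i$ gives $\Sq^1(y^i)=0$ and $\Sq^1(xy^i)=0$.

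The real subtlety is the $\m_2^F$-linearity of the action. Over $\r$ we have $\Sq^1(\tau)=\rho\neq 0$, so the operations are not $\m_2^F$-linear, and a summand $\m_2^F\{x_i\}$ is not a trivial module in the naive sense. I will interpret $\Sigma^{i,\lfloor (i+1)/2\rfloor}\m_2^F$ as the suspension of $\m_2^F$ carrying its canonical $\ca^F$-module structure. The claim is then that $\m_2^F\cdot x_i$ is a sub-$\ca^F(0)$-module isomorphic to this suspension. This follows from the Cartan formula: for $m\in\m_2^F$,
\[\Sq^1(m x_i)=\Sq^1(m)x_i+m\Sq^1(x_i)=\Sq^1(m)x_i,\]
so $m x_i\mapsto \Sigma m$ is $\ca^F(0)$-linear.

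Third, I treat $Q_1=\Sq^1\Sq^2+\Sq^2\Sq^1$. Over $\c$, $Q_1$ is a derivation, so it suffices that $Q_1x=Q_1y=0$. This holds because $\Sq^2(x)=0$ and $\Sq^1(y^2)=0$. Over $\r$, $Q_1$ is not primitive; its coproduct contains $\rho$-terms involving $Q_0\otimes Q_0$, as in \cite{DI16a}. Those extra terms all involve $\Sq^1$ applied to $x$ or $y$, so they vanish on products of generators. On $m\cdot x_i$ the same coproduct shows that $Q_1(m x_i)$ is the corresponding suspension of $Q_1(m)$, because every term hitting $x_i$ with a positive-degree operation is zero.

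I expect the main obstacle to be the $\r$ case: checking that the $\rho$-twisted Cartan formulas and the coproduct of $Q_1$ produce no terms that survive on the basis. I will handle it by showing that every correction term contains a factor $Q_0$ acting on some power of $x$ or $y$, which vanishes by the second step.
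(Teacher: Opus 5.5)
Your proposal is correct and follows the paper's route. The relations $\Sq^1x=\Sq^1y=\Sq^2x=0$, propagated through the Cartan formula, kill $Q_0$ and $Q_1$ on every basis class $x_i$. Hence each $\m_2^F x_i$ is a sub-$\ce^F(1)$-module isomorphic to the suspension of $\m_2^F$ with its canonical action.

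Your extra care over $\r$ is what the paper's terse ``the actions are trivial'' glosses over: the formula $Q_1(ab)=Q_1(a)b+aQ_1(b)+\rho\,Q_0(a)Q_0(b)$, and the failure of $\m_2^\r$-linearity coming from $\Sq^1\tau=\rho$. One small correction: the $\tau\Sq^1(a)\Sq^1(b)$ term in $\Sq^2$ of a product occurs over $\c$ as well. The genuinely $\r$-specific correction is the $\rho\Sq^1(a)\Sq^1(b)$ term in $\Sq^3$, which is what produces the $\rho\, Q_0\otimes Q_0$ term you use.
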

\begin{corollary}\label{cor: a1}
As a module over $\ca^F(1)$, 
\[\begin{aligned}
	H^{**}(B\mu_{2^n}) & \cong \Sigma \m^F_2 \;\bigoplus_{i \ge 1} \Big( \Sigma^{4i+2}\ca(1)\ddash\ce(1) \;\oplus\; \Sigma^{4i+3}\ca(1)\ddash\ce(1)\Big)\\
& \cong \m^F_2{x_1} \;\bigoplus_{i \ge 1} \Big( \ca(1)\ddash\ce(1){x_{4i+2}} \;\oplus\; \ca(1)\ddash\ce(1){x_{4i+3}} \Big)
\end{aligned}
\]
where $\ca(1) \ddash \ce(1)$ is isomorphic to $\m^F_2 \oplus \Sigma^2 \m^F_2$ connected with $\Sq^2$.
\end{corollary}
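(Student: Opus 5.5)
The plan is to follow the classical argument of \cref{cor: classicala1}, adapting it to the motivic setting using the preceding theorem. First, fix the $\m_2^F$-basis of $H^{**}(B\mu_{2^n})$ from Voevodsky's description: $x_{2i}=y^i$ in bidegree $(2i,i)$ and $x_{2i+1}=xy^i$ in bidegree $(2i+1,i+1)$. Next, compute $\Sq^1$ and $\Sq^2$ on every basis element using the Cartan formula. Since $\ca^F(1)$ is generated by $\Sq^1$ and $\Sq^2$, these values determine the whole $\ca^F(1)$-module structure.

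For $\Sq^1$ the computation is immediate. We have $\Sq^1 x=\Sq^1 y=0$, so the motivic Cartan formula gives $\Sq^1(y^i)=0$ and $\Sq^1(xy^i)=0$. Its only correction terms involve $\rho$ multiplied by products of $\Sq^1$'s applied to the factors, and these vanish.

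For $\Sq^2$, the motivic Cartan formula reads $\Sq^2(ab)=\Sq^2a\cdot b+\tau\,\Sq^1a\,\Sq^1b+a\,\Sq^2 b$, possibly with additional $\rho$-terms over $\r$ that contain $\Sq^1$ factors. Every $\Sq^1$-term vanishes, so by induction
\[
\Sq^2(y^i)=i\,y^{i+1},\qquad \Sq^2(xy^i)=i\,xy^{i+1}.
\]
Thus $\Sq^2 x_{j}=x_{j+2}$ exactly when $j\equiv 2,3 \pmod 4$, and $\Sq^2 x_j=0$ otherwise. This matches the $\eta$-attaching maps in the cell structure. I also need to check that $\Sq^2$ of the target vanishes: this holds because $j+2\equiv 0,1\pmod 4$. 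The same congruence also shows that $\Sq^2\Sq^2$ and $\Sq^1\Sq^2$ applied to the generators vanish, so nothing links the pairs to one another.

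The module therefore splits as a direct sum. The first summand is $\m_2^F x_1$, on which $\ca^F(1)$ acts trivially. The remaining summands are the two-cell modules spanned by $\{x_{4i+2},x_{4i+4}\}$ and $\{x_{4i+3},x_{4i+5}\}$ for $i\ge 0$, each joined by $\Sq^2$. Each two-cell module is identified with $\ca^F(1)\ddash\ce^F(1)$, which is $\m_2^F\oplus\Sigma^{2,1}\m_2^F$ joined by $\Sq^2$ because $Q_0$ and $Q_1$ act trivially on the generator. The identification is made by sending the generator $1$ to $x_{4i+2}$ or $x_{4i+3}$. The Cartan computation also supplies the weight bookkeeping for the suspension bidegrees, which the statement abbreviates. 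Note that the indexing in the displayed statement should start at $i\ge0$, so that $x_2$ and $x_3$ are included.

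The main obstacle is the case $F=\r$. There the $\m_2^F$-module structure is not trivial under the Steenrod action, since $\Sq^1\tau=\rho$, and the Cartan formula carries $\rho$-corrections. To handle this, I will use the explicit $\r$-motivic Cartan formula from \cite{Voe03}: $\Sq^2(ab)=\Sq^2a\, b+\tau\Sq^1a\Sq^1b+a\Sq^2b+\rho\,\Sq^1a\,\Sq^1b$ (up to the exact form). Every correction term still contains a factor $\Sq^1$ of a basis element, so all of them vanish.

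What remains is to check that the splitting is one of $\ca^F(1)$-modules over $\m_2^F$, and not merely of $\f_2$-vector spaces. Because the action of $\ca^F$ on $\tau$ is nontrivial, I will phrase the claim as in \cref{prop: motivic description}. That is, I will regard $H^{**}(B\mu_{2^n})$ as a free $\m_2^F$-module and verify that the operations on the basis generators are exactly those listed above. The corresponding Ext computations then depend only on this data.
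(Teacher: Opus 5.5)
Your proposal is correct and takes essentially the paper's route: the splitting follows from the generator actions $\Sq^1x=\Sq^1y=\Sq^2x=0$, $\Sq^2y=y^2$ of the preceding theorem via the Cartan formula, exactly as in \cref{cor: classicala1}. Your induction $\Sq^2(y^i)=i\,y^{i+1}$, $\Sq^2(xy^i)=i\,xy^{i+1}$ and your corrections to $i\ge 0$ and to the suspensions $\Sigma^{1,1}$, $\Sigma^{4i+2,2i+1}$, $\Sigma^{4i+3,2i+2}$ match what the paper actually uses in \cref{kq}; the only harmless slip is that Voevodsky's formula is $\Sq^2(ab)=\Sq^2a\cdot b+a\,\Sq^2b+\tau\,\Sq^1a\,\Sq^1b$ with no $\rho$-term, and $\Sq^1$ is a derivation, since the $\rho$-corrections occur only for odd squares.
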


\end{minipage} \hspace{-3.8em}
\begin{minipage}{0.1in}
   \includegraphics[scale=0.62]{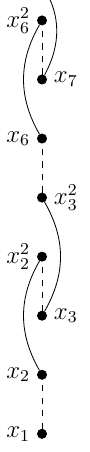}\label{fig:Coho}
\end{minipage}

\begin{remark} Again, $H^{**}(B\mu_{2^n})$ contains a single copy of $\m_2^F$ in bidegree $(0,0)$, corresponding to the 0-cell in the motivic cell decomposition. So, it is convenient to work with reduced mod 2 cohomology which we still denote by $H^{**}(B\mu_{2^n})$, as illustrated in the figure above.
\end{remark}

\subsection{Extension to other base fields}
\label{subsec:extention}

We conclude by explaining how the preceding calculations extend beyond
the base fields $\c$ and $\r$. Let $K$ be an algebraically closed field
of characteristic not equal to $2$. The motivic Lefschetz principle
of \cite[Proposition~5.2.1]{BCQ25} provides a zigzag of base-change
functors between the motivic stable homotopy categories over $K$ and
$\c$ which restricts to an equivalence on the full subcategories of cellular motivic spectra. This equivalence is compatible
with the corresponding motivic Adams spectral sequences and, in
particular, induces an identification
\[
\Ext_{\ca^K}(\m_2^K,\m_2^K)
\cong
\Ext_{\ca^\c}(\m_2^\c,\m_2^\c).
\]
Consequently, the calculations carried out over $\c$ extend to every
algebraically closed field of characteristic not equal to $2$.

Similarly, if $K$ is a real closed field, the real motivic Lefschetz
principle of \cite[Theorem~5.1]{DFNQJ25} identifies the corresponding
calculations with those over $\r$. Consequently, the calculations
carried out over $\r$ extend to every real closed field.


\section{General Idea}\label{sec:strategy}

The goal of the paper is to compute the \(2\)-completed effective and very effective algebraic \(K\)-theory of the classifying spaces \(B\mu_{2^n} , n>1\) over various fields. 
We first begin with the classical computations of the $2$-completed homology groups $H\z_* BC_{2^n}$, $ku_* BC_{2^n}$, and $ko_* BC_{2^n}$ and subsequently adapt these techniques to the $F$-motivic context.
For a detailed exposition of the classical tools, we refer the reader to \cite{BC18},\cite{BG03} and \cite{BG10}.

Our approach proceeds in two steps:  
\begin{enumerate}
	\item we determine some additive structure using the Atiyah--Hirzebruch spectral sequence;  
	\item we then verify the results from (1) and resolve potential extensions describing the multiplicative structure via the Adams spectral sequence.  
\end{enumerate}

Let $R = H\z, kgl$ or $ kq$ be the motivic spectra.

\subsection{First Step: The Atiyah--Hirzebruch Spectral Sequence}\label{subsec:AHSS}

	The Atiyah--Hirzebruch spectral sequence for computing
	\(R_{**}B\mu_{2^n}\)
	has the form
	\[
	E_1 = \bigoplus_{\substack {\text{i-cells } \\ \text{in } B\mu_{2^n}}} \Sigma^i R_{**} \;\;\Rightarrow\;\; gr^{*}_{AH}(R_{**}B\mu_{2^n}),
	\]
	where the differentials are determined by the attaching maps of the cellular decomposition of $B\mu_{2^n}$. Since each even-dimensional cell is attached to the preceding odd-dimensional cell via a map of degree $h^n$, the $d_1$-differentials take the form
	\[
	d_1(x[2i]) = h^n \cdot x[2i-1], \qquad i \geq 1,
	\]
	where $x[i]$ denotes an element in filtration $i$ with $x$ in the coefficient ring $R_{**}$. The non-zero $d_2$-differentials arise from attaching maps given by the first Hopf invariant one map $\eta$, so that an $i \equiv 0,1 \pmod 4$ cell is attached to the preceding $i \equiv 2,3 \pmod 4$ cell, yielding
	
	\[
	d_2(x[i]) = \eta \cdot x[i-2], \qquad i \equiv 0,1 \pmod 4,\quad i \geq 4.
	\]
	
	The \(d_3\)-differentials are
	determined, up to the usual Toda bracket indeterminacy, by the three-cell
	subquotients associated with two successive attaching maps; see
	\cite[Lemma~6.1]{WX17}. Thus,
	\[
	d_3(x[i])\in
	\begin{cases}
	\langle x,\eta,h^n\rangle[i-3],
	& i\equiv 0\pmod 4,\quad i>0,\\[2mm]
	\langle x,h^n,\eta\rangle[i-3],
	& i\equiv 2\pmod 4,\quad i>2.
	\end{cases}
	\]
	These Toda brackets are defined because the relevant consecutive
	composites vanish. When \(i\equiv 0\pmod 4\), the class \(x[i]\) can reach
	the \(E_3\)-page only if its \(d_2\)-differential vanishes, so that
	\(\eta\cdot x=0\). Similarly, when \(i\equiv 2\pmod 4\), existence of
	the \(d_1\)-differential implies \(h^n\cdot x=0\). The motivic null-Hopf relation \(h\eta=\eta h=0\), detected by the Adams relation \(h_0h_1=0\), is established in \cite[Theorem~1.4]{DI13}; see also \cite[\S~5.2, Table~2]{DI10}.

	In most cases, the spectral sequence collapses at the $E_4$-page, giving the ranks of the groups $R_{**}(B\mu_{2^n})$ as modules over $\m_2^F$. 
\begin{remark}
In all Atiyah--Hirzebruch spectral sequence charts, for example \cref{fig: HZ AHSS over R} appearing in this paper, the axes are indexed by (stem, topological degree of cell). The motivic weight $w$ is suppressed to simplify the presentation.
Differentials are shown in red (slope downward to the left).
\end{remark}

\subsection{Second Step: The Adams Spectral Sequence}\label{ASS}
The motivic Adams spectral sequence provides a primary computational tool for determining the completed motivic stable homotopy groups of a motivic spectrum.

For an $F$-motivic space $X$, the Adams spectral sequence with respect to the motivic Eilenberg--Mac Lane spectrum $H\f_2$ takes the form
\[
E_2^{s,(t,w)} = \Ext_{\ca^F}^{s,(t,w)}\!\Big(H^{**}(X), \m_2^F\Big)
\;\;\Rightarrow\;\;
\pi_{t-s,w}(X)^{\wedge}_{(2,\eta)}.
\] with differentials
\[
d_r \colon E_r^{s,(t,w)} \longmapsto E_r^{s+r,(t+r-1, w)}.
\]
 
For a motivic cell spectrum $X$ of finite type over a field of characteristic $0$, this spectral sequence converges to the homotopy groups of the $(2,\eta)$-completion of $X$ \cite[Thm.~1]{HKO11a}. Moreover, if
\(\operatorname{cd}_2(F(i))<\infty\), where \(i=\sqrt{-1}\), then the same
theorem identifies the \((2,\eta)\)-completion with the \(2\)-completion.
Consequently, over \(\c\) and \(\r\), the spectral sequence
computes the \(2\)-completed motivic stable homotopy groups.


Here, the groups $\Ext_{\ca^F}^{s,(t,w)}(X,\m_2^F)$ are trigraded, with grading conventions as follows:
\begin{itemize}
    \item $t-s$ is the \emph{stem};
    \item $s$ is the \emph{Adams filtration} (the filtration degree in the cobar complex);
    \item $w$ is the \emph{motivic weight};
    \item $(t,w)$ is the \emph{internal degree} (corresponding to the grading in $\ca^F$);
    \item $t-s-w$ is the \emph{Milnor--Witt degree} (also referred to as the coweight).
\end{itemize}

\begin{remark}
Throughout this paper, we write $\Ext_{\ca^F}^{s,(t,w)}(X)$ for $\Ext_{\ca^F}^{s,(t,w)}(X,\m_2^F)$ unless stated otherwise. In all Adams charts appearing in this paper, the axes are indexed by $(t-s,s)$, and the motivic weight $w$ is suppressed to simplify the presentation. 
Moreover, when writing $R_{**}(B\mu_{2^n})$ we implicitly mean the $2$-completed object, and we suppress the $2$-completion from the notation.
\end{remark}

Our goal is to compute the $2$-completed homology groups
\(
R_{**}(B\mu_{2^n})
\)
for the motivic spectra $R = H\z, kgl,$ and $kq$. So, we take
\(
X = B\mu_{2^n} \wedge R
\)
in the Adams spectral sequence. Using the standard identification
\[
\pi_{**}(B\mu_{2^n} \wedge R) \cong R_{**}(B\mu_{2^n}),
\]
the motivic Adams spectral sequence takes the form
\[
E_2^{**}
= \Ext_{\ca^F}^{***}\!\Big(H^{**}(B\mu_{2^n} \wedge R)\Big)
\;\overset{\textit{K{\"u}nneth}}{\cong}\;
\Ext_{\ca^F}^{***}\!\Big(H^{**}(B\mu_{2^n}) \otimes H^{**}(R)\Big)
\;\Rightarrow\;
R_{**}(B\mu_{2^n}).
\]
The isomorphism labeled ``K{\"u}nneth'' follows from the Künneth theorem \cite[Thm.~8.7]{DI05}. In our case, the higher Tor terms vanish since $H_{**}(B\mu_{2^n})$ is free as an $H_{**}$-module, yielding the stated tensor product decomposition.

When they occur, the differentials in this spectral sequence can often be understood by comparison with the Atiyah--Hirzebruch spectral sequence, which determines the ranks of the homology groups and detects certain necessary differentials, see for example in \cref{prop:diff in ko}. The module structure over $\Ext_{\ca^F}^{***}(\m_2^F)$ allows some differentials to extend naturally, while the remaining differentials are determined using more specific techniques, such as Betti realization, or known patterns of periodicity, see for example in \cref{thm:c-kgl}. 

\section{Classical Computations}\label{sec:classical}
The connective $K$-theory of the classifying spaces of cyclic groups of order $2^n$ was computed by Bruner and Greenlees in \cite{BG03,BG10}. In this section, we use their results to illustrate the strategy described in the previous section. Note that techniques used in \cite{BG03,BG10} to compute these homology groups differ from the ones used here.

\subsection{Integral Homology}

\begin{proposition}
	The $2$-completed integral homology of $BC_{2^n}$ is given by
	\[
	H\z_k BC_{2^n} \cong
	\begin{cases}
		\z/2^n, & k>0 \text{ odd},\\[1mm]
		0, & k>0 \text{ even}.
	\end{cases}
	\]
\end{proposition}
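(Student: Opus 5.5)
Following the two-step strategy of \cref{sec:strategy}, we first compute the additive structure with the Atiyah--Hirzebruch spectral sequence for $H\z$ and then settle extensions with the Adams spectral sequence.

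\textbf{Atiyah--Hirzebruch step.} We use the classical cell structure of $BC_{2^n}$: one cell $x_i$ in each dimension $i\ge 0$. Each even cell $x_{2i}$ is attached to $x_{2i-1}$ by a map of degree $2^n$. Since $H\z_*=\z_{2}^\wedge$ is concentrated in degree $0$, the $E_1$-page of the reduced spectral sequence is a single row, with one copy of $\z_2^\wedge$ in each stem $i\ge 1$. The only possible differentials are $d_1$, and the formula $d_1(x[2i])=2^n\cdot x[2i-1]$ of \cref{subsec:AHSS} applies. Higher differentials $d_r$, $r\ge 2$, leave the coefficient degree $0$ and therefore vanish, because $\eta$ and all Toda brackets land in degrees where $H\z_*$ is zero. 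Thus $d_1\colon \z_2^\wedge\{x[2i]\}\to \z_2^\wedge\{x[2i-1]\}$ is injective with cokernel $\z/2^n$. This gives $E_\infty=E_2$ with $\z/2^n$ in each odd stem and $0$ in each positive even stem. Since each stem has exactly one nonzero filtration, there are no extension problems, and the proposition follows.

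\textbf{Adams cross-check.} We also confirm the result with the Adams spectral sequence. By \cref{prop: classical description} and the change-of-rings isomorphism,
\[
E_2=\Ext_{\ca}\big(H^*(BC_{2^n})\otimes \ca\ddash\ca(0)\big)\cong \Ext_{\ca(0)}\big(H^*(BC_{2^n})\big).
\]
By \cref{cor: classicala0e1}, this is a sum of shifted copies of $\Ext_{\ca(0)}(\f_2)=\f_2[h_0]$, one $h_0$-tower in each stem $i\ge1$. The counting from the first step forces Adams differentials $d_n$ from the tower in stem $2i$ to the tower in stem $2i-1$, truncating each odd tower to length $n$. This is consistent with the May--Milgram correspondence, since the Bockstein on $H^*(BC_{2^n};\f_2)$ first becomes nonzero at the $n$-th stage. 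An $h_0$-tower of length $n$ encodes $\z/2^n$, which confirms the group structure.

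\textbf{Main obstacle.} The only delicate point is justifying that the AHSS $d_1$ is exactly multiplication by $2^n$, rather than something merely detected modulo higher filtration. We would argue this directly from the cellular chain complex of the standard lens-space CW structure on $BC_{2^n}$, whose boundary maps are alternately $0$ and $2^n$. Alternatively, we would use the universal coefficient computation $H_k(BC_{2^n};\z)=\z/2^n$ for $k$ odd, which matches the above.
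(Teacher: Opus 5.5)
Your proposal is correct and follows the paper's proof essentially step for step. Both use the Atiyah--Hirzebruch spectral sequence with $d_1(x[2i])=2^n x[2i-1]$, collapsing at $E_2$ with no extension problems, and then the same $\Ext_{\ca(0)}$ cross-check with $d_n(x_{2k})=h_0^n x_{2k-1}$. Your collapse argument (the coefficients are concentrated in degree $0$) and your identification of $d_1$ with the cellular boundary of the lens-space CW structure are, if anything, slightly more careful than the paper's parity argument.
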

\begin{proof} Consider the Atiyah--Hirzebruch spectral sequence (\cref{subsec:AHSS}):
	\[
E_1 = \bigoplus_{\substack{\text{each cell } i \\ \text{in } BC_{2^n}}} \Sigma^i H\z_* = \bigoplus_{i \geq 1}  \z\{x[i]\} \;\Longrightarrow\; gr^*_{AH}H\z_* BC_{2^n},
\]
where \(H\z_* = \z\), and \(x[i]\) denotes the generator of \(x \in H\z_*\) in filtration \(i\).

 The only nontrivial \(d_1\)-differential,shown in \cref{fig:grAHHZ} are
\[
d_1(x[2i]) = 2^n x[2i-1], \quad i \ge 1.
\]

The spectral sequence collapses at \(E_2\) since $E_2$ is concentrated in odd degrees.
The associated graded of \(H\z_* BC_{2^n}\) with respect to the Atiyah--Hirzebruch filtration is
\[
	\mathrm{gr}^*_{AH} H\z_k BC_{2^n} \cong 
	\begin{cases}
		\mathbb{Z}/2^n, & k>0 \text{ odd},\\[1mm]
		0, & k>0 \text{ even}.
	\end{cases}
\]

There are no possible hidden extensions, so this completely determines the homology groups.
\end{proof}

\begin{figure}[H]
	\vspace{-0.5cm}
	\includegraphics[scale=0.4]{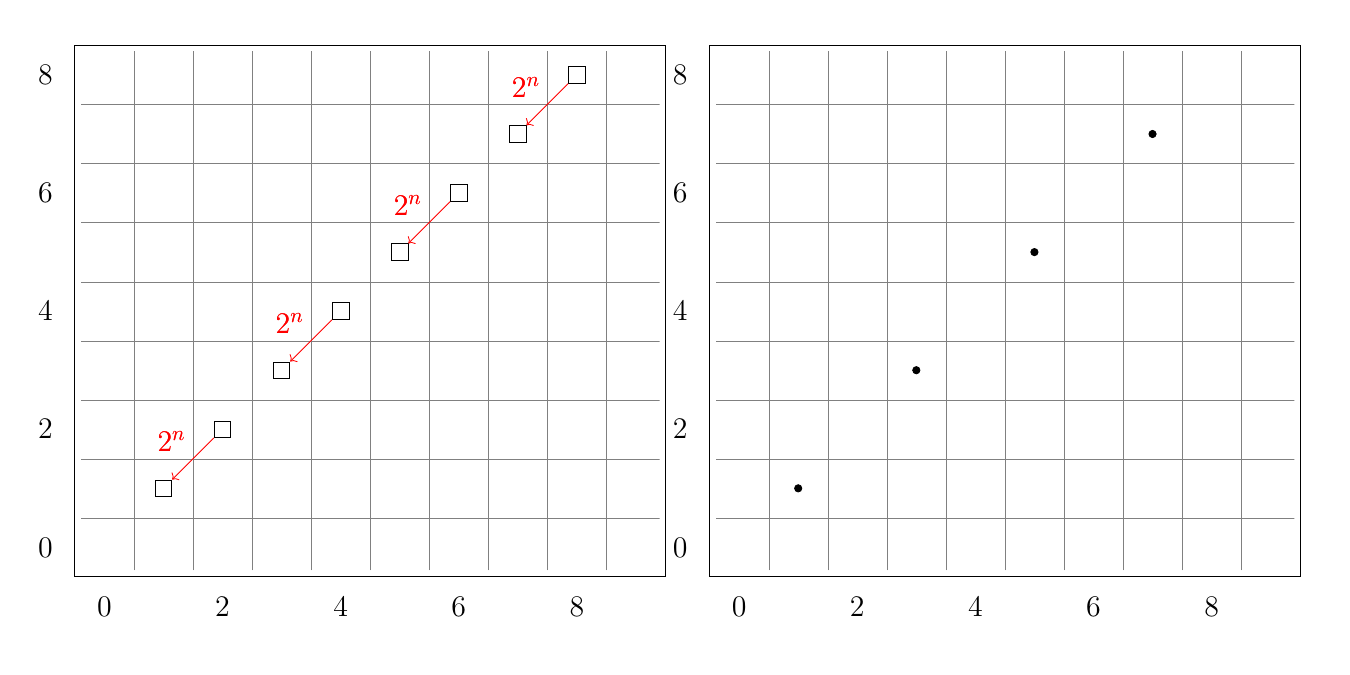}
	\vspace{-0.6cm}
	\caption{The $E_1$-page (left) and $E_{\infty}$-page (right) of the Atiyah--Hirzebruch spectral sequence converging to $gr^* H\z_{*}BC_{2^n}$. Each $\square$ represents a copy of $\z$,$\bullet$ represents a copy of $\z/2^n$, and the $d_1$-differentials are drawn in red (slanted down to the left).}
	\label{fig:grAHHZ}
\end{figure}

For reference, we compute the same result using the Adams spectral sequence (\cref{ASS}):
\[
E_2^{**} \cong \Ext_{\ca}^{**}\Big(H^*(BC_{2^n}) \otimes H^*(H\z)\Big) \Rightarrow H\z_*(BC_{2^n}).
\]

Using the isomorphism $H^* (H\z) \cong \ca \ddash \ca(0)$ from \cref{prop: classical description} and applying a change-of-rings isomorphism \cite{Rav86}, the $E_2$-term becomes
\[
E_2^{**} \cong \Ext_{\ca(0)}^{**}\Big(H^*(BC_{2^n})\Big).
\]
From the $\Sq^1$-action on $H^*(BC_{2^n})$ described in \cref{prop: classicala0e1a1}, we obtain an isomorphism of $\ca(0)$-modules:
\[
H^*(BC_{2^n}) \cong \bigoplus_{i \geq 1} \Sigma^i \f_2, 
\]
where $\f_2$ is the trivial $\ca(0)$-module.
Recall that
\[
\Ext_{\ca(0)}^{**}(\f_2,\f_2) \cong \f_2[h_0],
\]
where $|h_0| = (0,1)$ in Adams grading $(t-s,s)$ (see, for example, \cite[Remark~5.4.4]{BC18}).

The $E_2$-page may be rewritten as
\[
E_2^{**}
\cong \bigoplus_{k \geq 1} \Ext_{\ca(0)}^{**}(\Sigma^k \f_2)
\cong \bigoplus_{k \geq 1} \Sigma^k \Ext_{\ca(0)}^{**}(\f_2)
\cong \bigoplus_{k \geq 1} \f_2[h_0]\{x_k\}.
\]
Here, $x_k$ denotes a generator corresponding to the suspension $\Sigma^k \f_2$.

Comparing with the ranks obtained from the Atiyah--Hirzebruch spectral sequence forces
\[
d_n( x_{2k}) = h_0^{n} x_{2k-1}.
\]
All remaining differentials are then determined by $h_0$-linearity.		

Since the $E_{n+1}$-term is concentrated only in odd stems, no higher differentials are possible. The spectral sequence collapses and we conclude
\[
H\z_k BC_{2^n} \cong 
\begin{cases}
	\z/2^n, &  k>0 \text{ odd},\\[1mm]
	0, & \text{otherwise}.
\end{cases}
\]

\vspace{-0.5cm}
\begin{figure}[H]
	\vspace{-0.5cm}
	\includegraphics[scale=0.5]{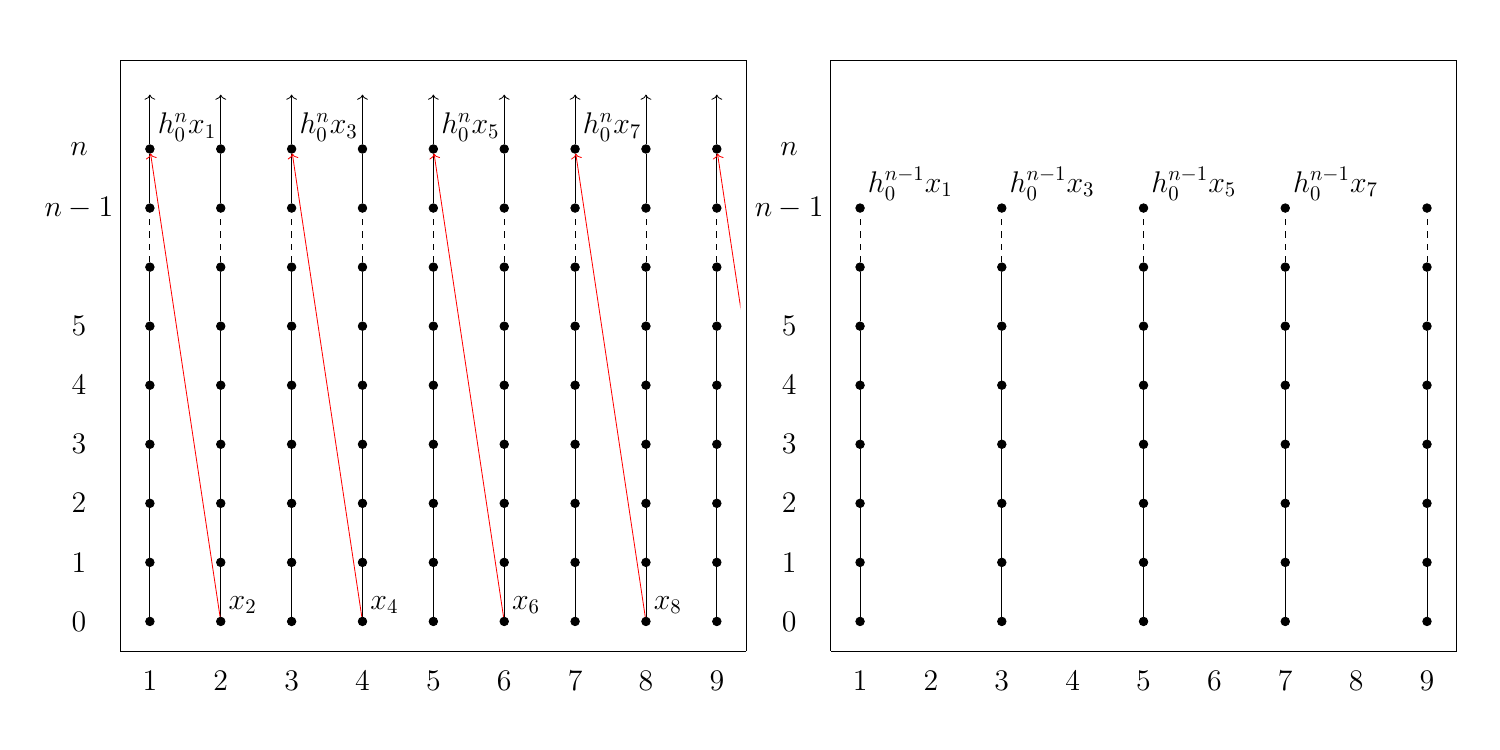}
	\vspace{-0.6cm}
	\caption{The $E_n$-page (left) and $E_{n+1} = E_{\infty}$-page (right) of the Adams spectral sequence converging to $H\z_{*}BC_{2^n}$. Each $\bullet$ represents a copy of $\f_2$, and the $d_n$-differentials are drawn in red (slanted upward to the left).}
	\label{fig:HZ}
\end{figure}

\subsection{Connective real K-theory} 
To compute the $ko$-homology of $BC_{2^n}$, the key ingredient for the Atiyah--Hirzebruch spectral sequence is the coefficient ring $ko_*$.

\begin{remark}
The groups $ko_*$ can be computed via the classical Adams spectral sequence, giving \[ ko_* \cong \frac{\z[\eta, \alpha, \beta]}{(\eta^3, 2\eta, \eta\alpha, \alpha^2-4\beta)}. \] 
\end{remark}

\begin{proposition}
The associated graded of \(ko_{*} BC_{2^n}\) with respect to the Atiyah--Hirzebruch filtration is
\[
gr^{*}_{AH} ko_k(BC_{2^n}) \cong
\begin{cases}
	\z/2^n \bigoplus \Big(\z/2^{n-1}\Big)^{\oplus 2m} &k = 8m+1> 0, \\
	\z/2 & k = 8m+2> 0, \\
	\Big(\z/2\Big)^{\oplus 2m+2} \bigoplus \Big(\z/2^{n}\Big)^{\oplus 2m+1} & k = 8m+3> 0, \\
	\Big(\z/2^{n-1}\Big)^{\oplus 2m+2} & k = 8m+5> 0, \\
	\Big(\z/2^{n} \oplus \f_2\Big)^{\oplus 2m+2} & k = 8m+7> 0, \\
	0 & otherwise\\
\end{cases}\]
\end{proposition}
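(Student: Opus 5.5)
The plan is to run the Atiyah--Hirzebruch spectral sequence of \cref{subsec:AHSS} with $R=ko$. The $E_1$-page is
\[
E_1=\bigoplus_{i\ge 1} ko_*\{x[i]\},\qquad ko_*\cong \z[\eta,\alpha,\beta]/(\eta^3,2\eta,\eta\alpha,\alpha^2-4\beta).
\]
In each column $i$ this gives $\z$ in stems $i+8j$ and $i+8j+4$, and $\z/2$ in stems $i+8j+1$ and $i+8j+2$ (the classes $\eta$ and $\eta^2$). Since $ko_*$ is $8$-periodic via $\beta$ and the cells are $4$-periodic in their attaching-map pattern, I organize the bookkeeping by the total degree $k$ modulo $8$. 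The differentials are then taken in three rounds.

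\textbf{Round $d_1$.} Each $d_1$ has the form $x[2i]\mapsto 2^n x[2i-1]$. On the $\z$-summands in even columns it is injective with cokernel $\z/2^n$ in the adjacent odd column. On the $\eta$- and $\eta^2$-summands it is zero, since $2\eta=0$. After $E_2$ the surviving groups are:
\begin{itemize}
\item $\z/2^n$ on $\{1,\alpha,\beta,\dots\}\cdot x[2i-1]$, in odd total degree;
\item the $\z/2$'s $\eta x[i]$ and $\eta^2 x[i]$ in all columns.
\end{itemize}

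\textbf{Round $d_2$.} The differentials are $d_2(y[i])=\eta y[i-2]$ for $i\equiv 0,1 \pmod 4$. There are two effects.
\begin{itemize}
\item From odd columns $i\equiv 1 \pmod 4$: $x[i]$ and $\eta x[i]$ hit $\eta x[i-2]$ and $\eta^2x[i-2]$. On $\z/2^n\{x[i]\}$ the kernel is $2\z/2^n\cong\z/2^{n-1}$. This is the source of the $\z/2^{n-1}$ summands in degrees $8m+1$ and $8m+5$.
\item From even columns $i\equiv 0 \pmod 4$: $\eta x[i]\mapsto \eta^2x[i-2]$.
\end{itemize}
Classes with $\alpha$ coefficients are untouched, since $\eta\alpha=0$.

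\textbf{Round $d_3$.} This is the step I expect to be the main obstacle. The differentials are given by the brackets $\langle y,\eta,2^n\rangle$ (columns $\equiv 0$) and $\langle y,2^n,\eta\rangle$ (columns $\equiv 2$), evaluated in $ko_*$.
\begin{itemize}
\item The key input is the bracket $\langle \eta^2,\eta,2\rangle\ni\alpha$ in $ko_*$, read off from the Adams chart of $ko$ as the class $h_0$-related to $h_1^3=0$. Juggling gives $\langle\eta^2,\eta,2^n\rangle=2^{n-1}\alpha$ modulo indeterminacy. This produces $d_3(\eta^2x[4k])=2^{n-1}\alpha\, x[4k-3]$, cutting the $\z/2^n$ on $\alpha x[4k-3]$ to $\z/2^{n-1}$.
\item By contrast, $\langle\eta,2^n,\eta\rangle$ is $0$ for $n\ge 2$: in the sphere it is $2^{n-1}\cdot\{2\nu\}$-type, and $\nu\mapsto 0$ in $ko$. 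So no $d_3$ issues from columns $\equiv 2$ on $\eta$-classes.
\item I have to check carefully that each bracket is defined, i.e.\ that the source survived to $E_3$, and I have to control the indeterminacy.
\end{itemize}

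\textbf{Collapse and tallying.} After $d_3$, I argue collapse at $E_4$. Every remaining class lies in odd total degree, or in degree $8m+2$ (the single surviving $\eta^2 x[\cdot]$ coming from the bottom of the pattern), and a sparseness/degree check rules out longer differentials. The $E_4$ groups are then tallied by degree mod $8$, collecting for each $k$ the contributions from the columns $i\le k$.
\begin{itemize}
\item Degree $8m+1$: $\z/2^n$ from $x[8m+1]$, and $\z/2^{n-1}$ pairs from the $\beta^j$- and $\alpha\beta^j$-multiples. This gives $2m$ such summands.
\item Degree $8m+3$: $\z/2^n$'s from the $x[4j+3]$-columns, together with $\z/2$'s from surviving $\eta$- and $\eta^2$-classes.
\item Degrees $8m+5$ and $8m+7$: similarly.
\end{itemize}
I match each count against the stated formula. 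The count of $\z/2$ versus $\z/2^{n-1}$ summands in degrees $8m+3$ and $8m+7$ is where an error in the $d_3$ analysis would show up. As a sanity check on total orders, I will compare with $ku$ via the cofiber sequence for $\eta$, or with the Bruner--Greenlees answer.
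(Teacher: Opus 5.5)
Your rounds of differentials match the paper's: the same $d_1$ and $d_2$, and the same $d_3(\eta^2\beta^j x[4k])=2^{n-1}\alpha\beta^j x[4k-3]$ coming from $2^{n-1}\alpha\in\langle\eta^2,\eta,2^n\rangle$. For $k=1$ this $d_3$ also cuts $\alpha\beta^j x[1]$ in column $1$. The resulting $E_4$-page does tally to the stated groups.

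\textbf{The collapse step does not go through as you state it.} On $E_4$ the only even-degree classes are $\eta\beta^m x[1]$ in degree $8m+2$, all in column $1$. So every possible longer differential has the form $d_r(y)=\eta\beta^m x[1]$ with $y$ in degree $8m+3$ and column $r+1$, and such $y$ exist. Already in degree $11$, the surviving classes $\alpha x[7]$, $\eta^2 x[9]$, $\eta x[10]$ and $x[11]$ could support $d_6$, $d_8$, $d_9$, $d_{10}$ onto $\eta\beta x[1]$, as far as total degree and filtration are concerned. Any one of these would change the answer in degrees $10$ and $11$. So no sparseness or degree count rules them out.

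\textbf{The missing ingredient is the one the paper uses:} the spectral sequence is one of $ko_*$-modules, so the differentials are $\eta$-linear.
\begin{enumerate}
\item Every odd-degree class of $E_4$ in columns $\ge 2$ is annihilated by $\eta$. This is because $\eta\beta^j x[4k+3]$ and $\eta^2\beta^j x[4k+2]$ are $d_2$-boundaries, and $2\eta=\eta\alpha=\eta^3=0$.
\item By contrast, $\eta\cdot\eta\beta^m x[1]=\eta^2\beta^m x[1]$ survives to $E_\infty$. No $d_2$ or $d_3$ hits it, and on $E_4$ there are no even-degree classes above column $1$.
\end{enumerate}
Hence $\eta\, d_r(y)=d_r(\eta y)=0$ forces $d_r(y)=0$.

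This argument needs the degree-$(8m+2)$ target to be $\eta\beta^m x[1]$, not an $\eta^2$-class as you wrote. Relatedly, the $\z/2^n$ in degree $8m+1$ is $\beta^m x[1]$, not $x[8m+1]$: for $m\ge 1$ the latter supports a $d_2$, and only $2x[8m+1]$ survives. Both slips come from forgetting that column $1$ supports no $d_2$.

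Finally, the vanishing of $d_3$ out of columns $\equiv 2 \pmod 4$ is immediate from $ko_{8j+3}=0$. No appeal to $\nu$ in the sphere is needed.
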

\begin{proof} Consider the  Atiyah--Hirzebruch spectral sequence \cref{subsec:AHSS}
\[
E_1 = \bigoplus_{\substack{\text{each cell } i \\ \text{in } BC_{2^n}}} \Sigma^i ko_* \Rightarrow gr^{*}_{AH} ko_*(BC_{2^n}).
\]

We have \[
d_1(x[2i]) = 2^n \cdot x[2i-1],\qquad i > 0.
\]
Moreover, a cell in degree \(i\equiv 0,1\pmod 4\) is attached to the preceding cell in degree \(i\equiv 2,3\pmod 4\), respectively, via the Hopf map, which yields
\[
d_2(x[i]) = \eta \cdot x[i-2],\qquad i \equiv 0,1 \pmod 4,\quad i \ge 4.
\]
Finally, using the relation \(2^{n-1}\alpha\in\langle\eta^2,\eta,2^n\rangle\) in $ko_*$ we obtain
\[
d_3(\eta^2[4k]) = 2^{n-1}\alpha[4k-3],\qquad k \ge 1.
\]

\begin{figure}[H] \vspace{-0.5cm}
	\centering
	\includegraphics[scale=0.45]{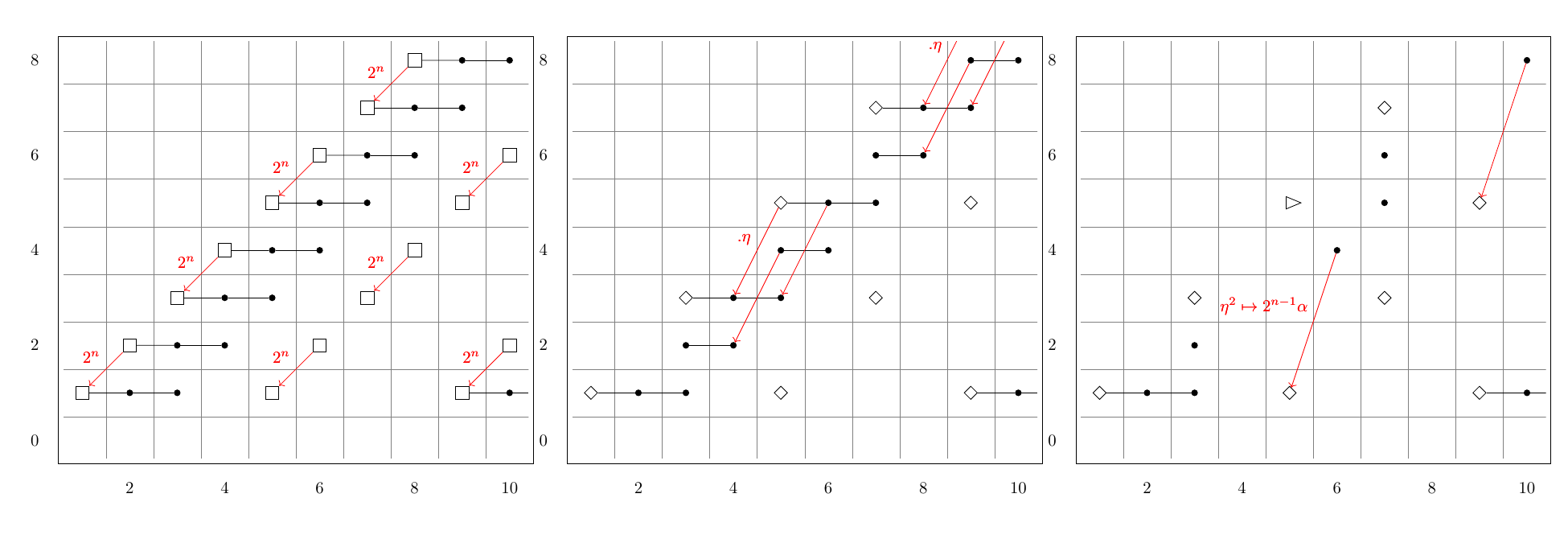}
	\vspace{-0.5cm} \caption{The $E_1 , E_2$ and $E_3$-pages (left to right) of the Atiyah--Hirzebruch spectral sequence converging to $gr^{*}_{AH} ko_*(BC_{2^n})$. Each $\square$ represents $\z$, $\diamond$ represents $\z/2^n$, $\triangleright $ represents $\z/2^{n-1}$ and each $\bullet$ represents $\f_2$. Horizontal lines represent multiplication by $\eta$. Differentials $d_1$, $d_2$, and $d_3$ are shown in red (slanted down).}
	\label{fig:E123koAH}
\end{figure}
Observe that all the target classes of higher differentials lie in even degrees and are of the form
\(\eta\cdot\beta^k[1]\).
\(\eta\)-linearity implies that the source of any higher differential must
also support nontrivial \(\eta\)-multiplication. However, every class in odd
degree is \(\eta\)-torsion, so no higher differentials can occur. The
associated graded \(ko\)-homology groups and their generators are depicted
in \cref{fig:koAH}.

\end{proof}
\vspace{-0.5cm}
\begin{figure}[H]
	\centering
	\includegraphics[scale=0.5]{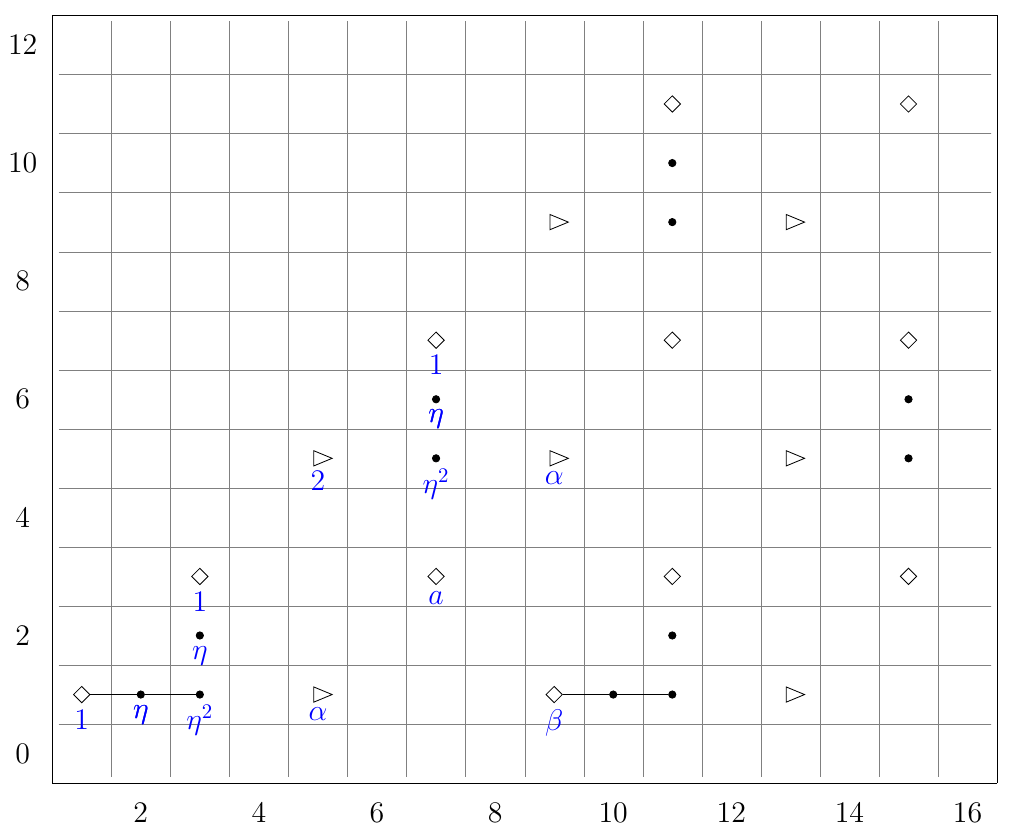}
	\vspace{-0.2cm} \caption{The chart is represents $gr^{*}_{AH} ko_*(BC_{2^n})$, where each $\square$ represents $\z$, $\diamond$  represents $\z/2^n$, $\triangleright $ represents $\z/2^{n-1}$ and each $\bullet$ represents $\f_2$. Horizontal lines represent multiplication by $\eta$.}
	\label{fig:koAH}
\end{figure}

To resolve the hidden extension problems, we now consider the Adams spectral sequence. By \cref{prop: classical description}, we have the isomorphism
\(
H^{**}(ko) \cong \ca \ddash \ca(1)
\) and further applying the change-of-rings isomorphism, the $E_2$-page simplifies to
\[
E_2^{**}
\cong
\Ext_{\ca(1)}^{**}(H^*(BC_{2^n}),\f_2).
\]

From \cref{cor: classicala1}, we have the decomposition
\[
H^*(BC_{2^n})
\cong
\Sigma \f_2
\;\bigoplus_{k \ge 0}
\left(
\Sigma^{4k+2}\ca(1)\ddash\ce(1)
\;\oplus\;
\Sigma^{4k+3}\ca(1)\ddash\ce(1)
\right).
\]
Applying $\Ext_{\ca(1)}^{**}(-,\f_2)$ and using the change-of-rings isomorphism yields
\begin{align*}
E_2^{**}
&\cong
\Sigma \Ext_{\ca(1)}^{**}(\f_2)
\;\oplus\;
\bigoplus_{k\ge0}
\left(
\Sigma^{4k+2}\Ext_{\ca(1)}^{**}(\ca(1)\ddash\ce(1))
\;\oplus\;
\Sigma^{4k+3}\Ext_{\ca(1)}^{**}(\ca(1)\ddash\ce(1))
\right)\\
&\cong
\Sigma \Ext_{\ca(1)}^{**}(\f_2)
\;\oplus\;
\bigoplus_{k\ge0}
\left(
\Sigma^{4k+2}\Ext_{\ce(1)}^{**}(\f_2)
\;\oplus\;
\Sigma^{4k+3}\Ext_{\ce(1)}^{**}(\f_2)
\right)\\
&\cong
\Ext_{\ca(1)}^{**}(\f_2)\{x_1\}
\;\oplus\;
\bigoplus_{k\ge0}
\left(
\Ext_{\ce(1)}^{**}(\f_2)\{x_{4k+2}\}
\;\oplus\;
\Ext_{\ce(1)}^{**}(\f_2)\{x_{4k+3}\}
\right).
\end{align*}

Recall from, for example, \cite[Section~4.5 and 4.6]{BC18}, \[ \Ext_{\ce(1)}^{**}(\f_2) \cong \f_2[h_0,v_1],\text{ and }
\Ext_{\ca(1)}^{**}(\f_2) \cong
\frac{\f_2[h_0,h_1,a,b]}
{(h_1^3,\;h_0h_1,\;h_1a,\;a^2-h_0^2b)}
\]
where \( |h_0|=(0,1)\), \( |h_1|=(1,1)\), \( |a|=(4,3)\), \( |b|=(8,4)\), and \( |v_1|=(2,1)\). The resulting $E_2$-page is shown in \cref{fig:diff ko}. 

Before calculating the differentials, we describe the multiplicative structure on the $E_2$-page of the Adams spectral sequence converging to $ko_*(BC_{2^n})$. A complete description of this multiplicative structure is given in \cite[Theorem~13.36]{BDDHJ25}. It is induced by the inclusion $\ca(0)\hookrightarrow\ca(1)$ together with the multiplicative structure on the $E_2$-page of the Adams spectral sequence converging to $H\z_*(BC_{2^n})$, which is determined by the Pontryagin product.

\begin{remark}\label{remark: v1 indecomposable}
Observe that $v_1 \notin \Ext_{\ca(1)}^{**}(\f_2)$ which implies that the classes $v_1x_{2^k-2}$ are indecomposable.  However, multiplication by $v_1^2$ is well-defined on the $E_2$-page. 
Since $a= h_0 v_1^2$ and all the classes, except $h_1$-multiple are non-$h_0$-torsion, multiplication by $a$ gives $v_1^2$- multiplication on all these classes on the $E_2$-page. In particular, we have
\(
a\cdot x= v_1^2 \cdot h_0 x.
\)
\end{remark}

\begin{remark}\label{remark: Multi on E_2page}
We record the following multiplicative relations on the $E_2$-page relevant for our computations. The $E_2$-page is generated as a module over $\Ext_{\ca(1)}^{**}(\f_2)$ by the classes $x_1$, $v_1^j x_{4k+2}$ and $v_1^j x_{4k+3}$ for $k\ge 0$ and $j\ge 0$. The multiplicative relations are given by
\[
ax_i=h_0 \cdot v_1^2x_i,\quad
x_i^2=0,\quad
h_1 \cdot x_i=0 \text{ for } i>1,\quad
v_1^jx_{4k+3}=x_1\cdot v_1^j x_{4k+2} \text{ for } j\geq 0,
\]
\[
x_{4k-2} \cdot v_1x_{4l-2}
=
\binom{2k+2l-1}{2k}h_0x_{4(k+l)-2} \pmod 2,
\] and 
\[
v_1x_{4k-2} \cdot v_1x_{4l-2}
=
\binom{k+l}{k}h_0 \cdot v_1x_{4(k+l)-2}\pmod 2 \quad \text{for } l \geq 0.
\]

All remaining products follow from $h_0,a$, and $b$-linearity. For example, we have \[ v_1^2 \cdot v_1^j x_{4k+2} = v_1^{j+2} x_{4k+2} \text{ and }
b \cdot v_1^j x_{4k+2} = v_1^{j+4} x_{4k+2}.\]
\end{remark}

\begin{proposition}\label{prop:diff in ko}

The nontrivial differentials in the Adams spectral sequence converging to
\(ko_*(BC_{2^n})\) are as follows: for arbitrary \(n\geq 2\), through stem \(14\),
we have
\[
\begin{aligned}
(1)\quad &d_n(x_2)=h_0^n x_1, &
(2)\quad &d_n(v_1x_2)=h_0^{n+1}x_3,\\
(3)\quad &d_n(x_6)=h_0^{n-1}v_1x_3, &
(4)\quad &d_n(v_1x_6)=h_0^{n+1}x_7+h_0^{n-1}v_1^2x_3,\\
(5)\quad &d_n(x_{10})=h_0^{n-1}v_1x_7, &
(6)\quad &d_n(v_1x_{10})=h_0^{n+1}x_{11}+h_0^{n-1}v_1^2x_7,\\
(7)\quad &d_n(x_{14})=h_0^{n-1}v_1x_{11}.
\end{aligned}
\]
In the case \(n=2\), the pattern continues beyond stem \(14\), and for \(k\geq 4\), we 
have
\[
\begin{aligned}
(8)\quad & d_2(x_{4k+2})=h_0v_1x_{4k-1},\qquad\qquad
(9)\quad d_2(v_1^2x_{4k+2})=h_0v_1^3x_{4k-1},\\ 
(10)\quad & d_2(v_1x_{4k+2})=h_0^3x_{4k+3}+h_0v_1^2x_{4k-1}.
\end{aligned}
\]
These differentials are $v_1^2$-linear.
\end{proposition}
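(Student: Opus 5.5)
The argument runs in four steps: locate the $h_0$-towers on the $E_2$-page, use the Atiyah--Hirzebruch orders to force differentials between them, pin down their formulas using the module and multiplicative structure of \cref{remark: Multi on E_2page}, and finally confirm convergence.

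\emph{Step 1: towers and forced differentials.} Through stem $14$ I list all infinite $h_0$-towers on the $E_2$-page. They are generated by $x_1$ (with its $a,b$ multiples), $v_1^jx_{4k+2}$ in even stems, and $v_1^jx_{4k+3}$ in odd stems. The Atiyah--Hirzebruch computation shows that $ko_*(BC_{2^n})$ is entirely torsion in positive degrees and has zero group in every even stem except $8m+2$. Hence every tower in an even stem must support a differential, and it can only hit towers in the adjacent odd stem.

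\emph{Step 2: length and targets.} The total order of $gr_{AH}$ in each odd stem must equal the total length of the truncated towers left there. This fixes the Adams length $d_r$. A torsion group $\z/2^n$ corresponds to a tower truncated at height $n$, and $\z/2^{n-1}$ to one truncated at height $n-1$. Comparing with the filtration differences gives $r=n$ in every case, and shows which target sits at which height. For example, in stem $1$ only $x_1$ survives, with order $2^n$, so $d_n(x_2)=h_0^nx_1$, which is (1). I then apply the same bookkeeping stem by stem. In stem $3$, the torsion $(\z/2)^2\oplus\z/2^n$ forces $v_1x_2$ to truncate $x_3$ at height $n+1$, after accounting for the $h_0$ shift between $v_1x_2$ and $x_3$. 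In stem $5$, $(\z/2^{n-1})^2$ forces $d_n(x_6)=h_0^{n-1}v_1x_3$, and $x_1a$ is truncated by $v_1^2x_2$ via $a$-linearity. Continuing through stem $14$ gives (2)--(7). The mixed targets in (4) and (6) are forced because each individual tower must be truncated to the height demanded by $\z/2^n$ versus $\z/2^{n-1}$. A single target would leave one tower untruncated or give the wrong order.

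\emph{Step 3: fixing the exact formulas.} To remove the remaining ambiguity I would use multiplicativity. Differentials are derivations with respect to the Pontryagin-type products, and $x_1$ is a permanent cycle since it is the bottom cell. Applying the relation $v_1^jx_{4k+3}=x_1\cdot v_1^jx_{4k+2}$ shows that the odd-stem classes $v_1^jx_{4k+3}$ are permanent cycles: they lie in the bottom filtration of odd stems and have no targets in stems where the chart is empty. The product formulas for $x_{4k-2}\cdot v_1x_{4l-2}$ and the Leibniz rule then relate $d_n(x_{4k+2})$ to $d_n$ of lower classes, and a sparseness check excludes other candidate targets in the same bidegree. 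Finally, $v_1^2$-linearity follows because $a$ is a permanent cycle and $a\cdot x=h_0v_1^2x$ on non-$h_0$-torsion classes, so differentials commute with $a$. Dividing by $h_0$ is legitimate on towers.

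\emph{Step 4: the case $n=2$ and the main obstacle.} For $n=2$ the same argument extends to all $k$. The formulas (8)--(10) are determined inductively from $v_1^2$- and $b$-periodicity together with the product formulas, with the mod-$2$ binomial coefficients controlling the pattern. After these differentials the $E_{n+1}$-page has no room for further differentials through stem $14$, for filtration reasons and because the remaining classes are $h_1$-multiples or $h_0$-torsion. The step I expect to be the main obstacle is Step 3. The binomial products only determine $d_n$ up to classes annihilated by the relevant multiplications, so the mixed terms in (4) and (6) must ultimately be pinned down by the order count. Past stem $14$ for $n>2$ this count becomes ambiguous, which is why the statement stops there.
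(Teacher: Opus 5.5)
\textbf{There is a genuine gap in your Step 2.} The Atiyah--Hirzebruch order count does not force the mixed target in (4), and Step 3 cannot supply it.

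In stem $7$ the relevant towers are on $x_7$ (filtration $0$) and $v_1^2x_3$ (filtration $2$). The differential $d_n(v_1^3x_2)=h_0^{n+1}v_1^2x_3$ is already forced by $v_1^2$-linearity from (2). Compare the two candidates for $d_n(v_1x_6)$:
\begin{itemize}
\item If $d_n(v_1x_6)=h_0^{n+1}x_7$ alone, both towers are truncated at height $n+1$, leaving $2n+2$ classes.
\item If $d_n(v_1x_6)=h_0^{n+1}x_7+h_0^{n-1}v_1^2x_3$, you get a tower of length $n+3$ on $x_7$ plus the $n-1$ classes $h_0^jv_1^2x_3$ with $0\le j\le n-2$, again $2n+2$ classes.
\end{itemize}
Both are compatible with $gr^*_{AH}ko_7=(\z/2^n\oplus\f_2)^{\oplus 2}$. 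No tower is left untruncated, and the order is correct in both cases. The associated graded records only the total order, not the exponent, so your sentence ``a single target would leave one tower untruncated or give the wrong order'' is false here.

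Multiplicativity does not rescue this. $v_1x_6$ is indecomposable: $v_1x_2\cdot v_1x_2=\binom{2}{1}h_0v_1x_6=0$, and the products $x_{4k-2}\cdot v_1x_{4l-2}$ only produce classes $h_0x_{4(k+l)-2}$. So the Leibniz rule never reaches $d_n(v_1x_6)$.

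The paper closes this with independent geometric input. The relative eta invariant $\eta_1-\eta_0$ of the lens space $L(2^n;1,1,1,1)$, computed via Gilkey's formula, shows that $ko_7(BC_{2^n})$ contains an element of order divisible by $2^{n+3}$. In the unmixed case every element has order at most $2^{n+2}$, even allowing a hidden extension from $h_0^nx_7$ to $h_0^nv_1^2x_3$. So the mixed term is forced. Once (4) is known, (5)--(7) follow by the Leibniz rule as you suggest. For (6) this uses $h_0v_1x_{10}=v_1x_2\cdot v_1x_6$, not an order count.

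The same problem recurs in your Step 4 for $n=2$. The indecomposables $v_1x_{2^k-2}$ with $k\ge 4$ are not reached by products. The first is $v_1x_{14}$: $\binom{4}{j}$ is even for $0<j<4$, so $h_0v_1x_{14}$ is not of the form $v_1x\cdot v_1x$. Nor does $v_1^2$- or $b$-periodicity help, since it only moves along a fixed generator $x_{4k+2}$ and never produces a new one. So nothing in your argument determines their mixed terms inductively.

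The paper instead imports the actual groups $ko_*(BC_4)$ from Bruner--Greenlees, for example the summand $\z/2^{4m+5}$ in stem $8m+7$. It then reads off (8)--(10) as the differentials compatible with those groups. Without some such information on exponents, not just orders, your argument determines the differentials on the indecomposable $v_1x_{2^k-2}$, $k\ge3$, only up to the mixed terms. Through stem $14$ that is just (4), and hence (6).
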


\begin{remark}
We would like to point out that the differentials stated in \cite[Theorem~13.38]{BDDHJ25} require correction. In particular, the argument based on the May--Milgram theorem alone is not sufficient to determine the differentials in the higher stems. We provide here a more detailed analysis of these differentials through stem \(14\). We are grateful to the authors of \cite{BDDHJ25} for their openness in acknowledging this issue and for their helpful discussions in completing the corrected differentials.
\end{remark}

\begin{proof}
\text{(1) and (2).}
The first two differentials are forced by comparison with the Atiyah--Hirzebruch spectral sequence for stems \(1\)--\(4\).

\medskip

\text{(3).}
Using the multiplicative relations from \cref{remark: Multi on E_2page}
 and the Leibniz rule, we obtain
\[
\begin{aligned}
d_n(h_0x_6)
=d_n(x_2\cdot v_1x_2)
=h_0^nx_1\cdot v_1x_2+x_2\cdot h_0^{n+1}x_3
=h_0^n \cdot v_1x_3.
\end{aligned}
\]
Since the left-hand side is \(h_0d_n(x_6)\) and multiplication by
\(h_0\) is injective on the relevant target, it follows that
\[
d_n(x_6)=h_0^{n-1}\cdot v_1x_3.
\]

\medskip

\text{(4).}
Comparison with the Atiyah--Hirzebruch spectral sequence shows that the differential on \(v_1x_6\) must truncate the \(x_7\)-tower. Moreover, the computation of \(ko_7(BC_{2^n})\) in \cref{app:eta-invariant} shows that there is an element of order \(2^{n+3}\) giving a summand of order at least \(2^{n+3}\). If \(d_n(v_1x_6)\) did not contain the term $h_0^{n-1}v_1^2x_3$, then the order of the summand would be less than $2^{n+3}$, even after resolving hidden extensions, which is a contradiction.
Hence,
\[
d_n(v_1x_6)=h_0^{n+1}x_7+h_0^{n-1}v_1^2x_3.
\]

\medskip

\text{(5).}
Similarly, using the relation
\(
h_0x_{10}=x_2\cdot v_1x_6
\)
and the fact that \(x_2\cdot x_3=0\) and \(x_2\cdot x_7=0\), we obtain
\[
\begin{aligned}
d_n(h_0x_{10})
=d_n(x_2\cdot v_1x_6)
=h_0^nx_1\cdot v_1x_6
+x_2\cdot(h_0^{n+1}x_7+h_0^{n-1}v_1^2x_3)
=h_0^n\cdot v_1x_7.
\end{aligned}
\]
Therefore,
\[
d_n(x_{10})=h_0^{n-1}v_1x_7.
\]

\medskip

\text{(6).}
For \(v_1x_{10}\), using
\(
h_0v_1x_{10}=v_1x_2\cdot v_1x_6,
\)
and applying the Leibniz rule, we obtain
\[
\begin{aligned}
d_n(h_0v_1x_{10})
=d_n(v_1x_2\cdot v_1x_6)
=h_0^{n+2}x_{11}+h_0^nv_1^2x_7.
\end{aligned}
\]
Again, multiplication by \(h_0\) is injective on the relevant target, so it follows that
\[
d_n(v_1x_{10})
=h_0^{n+1}x_{11}+h_0^{n-1}v_1^2x_7.
\]

\medskip

\text{(7).}
Finally, using
\(
h_0x_{14}=x_2\cdot v_1x_{10},
\)
we obtain
\[
\begin{aligned}
d_n(h_0x_{14})
=d_n(x_2\cdot v_1x_{10})
=h_0^nx_1\cdot v_1x_{10}
+x_2\cdot(h_0^{n+1}x_{11}+h_0^{n-1}v_1^2x_7)
=h_0^nv_1x_{11},
\end{aligned}
\]
and hence
\[
d_n(x_{14})=h_0^{n-1}v_1x_{11}.
\]
(8), (9), and (10). For \(n=2\), the remaining differentials follow from the
computation of \(ko_{*}(BC_{4})\) by Bruner and Greenlees.

Bruner and Greenlees compute the groups \(ko_{*}(BC_{4})\) in
\cite[Example~7.3.3]{BG03}, obtaining
\[
ko_{k}(BC_{4}) \cong
\begin{cases}
	\mathbb{Z}/2^{2m+1}\oplus \mathbb{Z}/2, &
	k=8m+1,\quad m\geq 0, \\[2pt]
	\mathbb{Z}/2, &
	k=8m+2,\quad m\geq 0, \\[2pt]
	\mathbb{Z}/2^{4m+3}\oplus \mathbb{Z}/2^{2m+1}, &
	k=8m+3,\quad m\geq 0, \\[2pt]
	\mathbb{Z}/2^{2m+2}, &
	k=8m+5,\quad m\geq 0, \\[2pt]
	\mathbb{Z}/2^{4m+5}\oplus \mathbb{Z}/2^{2m+1}, &
	k=8m+7,\quad m\geq 0, \\[2pt]
	0, & \text{otherwise}.
\end{cases}
\]

Comparing these groups with the \(E_{2}\)-page shown in
\cref{fig:koAH}, the above \(d_{2}\)-differentials are the unique
differentials compatible with the module structure over the \(E_\infty\)-page of \(ko\). For example, the differential \(d_2(v_1x_{10})\) must contain the term \(h_0^3x_{11}\) to account for the \(\mathbb{Z}/2^7\) summand in \(ko_{11}(BC_{4})\). The remaining terms are determined by the multiplicative structure on the \(E_2\)-page and the requirement that the resulting \(E_\infty\)-page matches the known homology groups up to hidden extensions.

In particular, the differentials are linear over \(h_0\) and \(a\). Since
\(
ax=h_0v_1^2x,
\)
we have
\[
\begin{aligned}
h_0d_n(v_1^2x)
&=d_n(h_0v_1^2x)
=d_n(ax)
=a\,d_n(x)
=h_0v_1^2d_n(x).
\end{aligned}
\]
By \(h_0\)-injectivity in the relevant degrees, it follows that
\[
d_n(v_1^2x)=v_1^2d_n(x).
\]
Therefore, all the displayed differentials are \(v_1^2\)-linear.
\end{proof}

\begin{figure}[H] \vspace{-0.5cm}
	\includegraphics[scale=0.42]{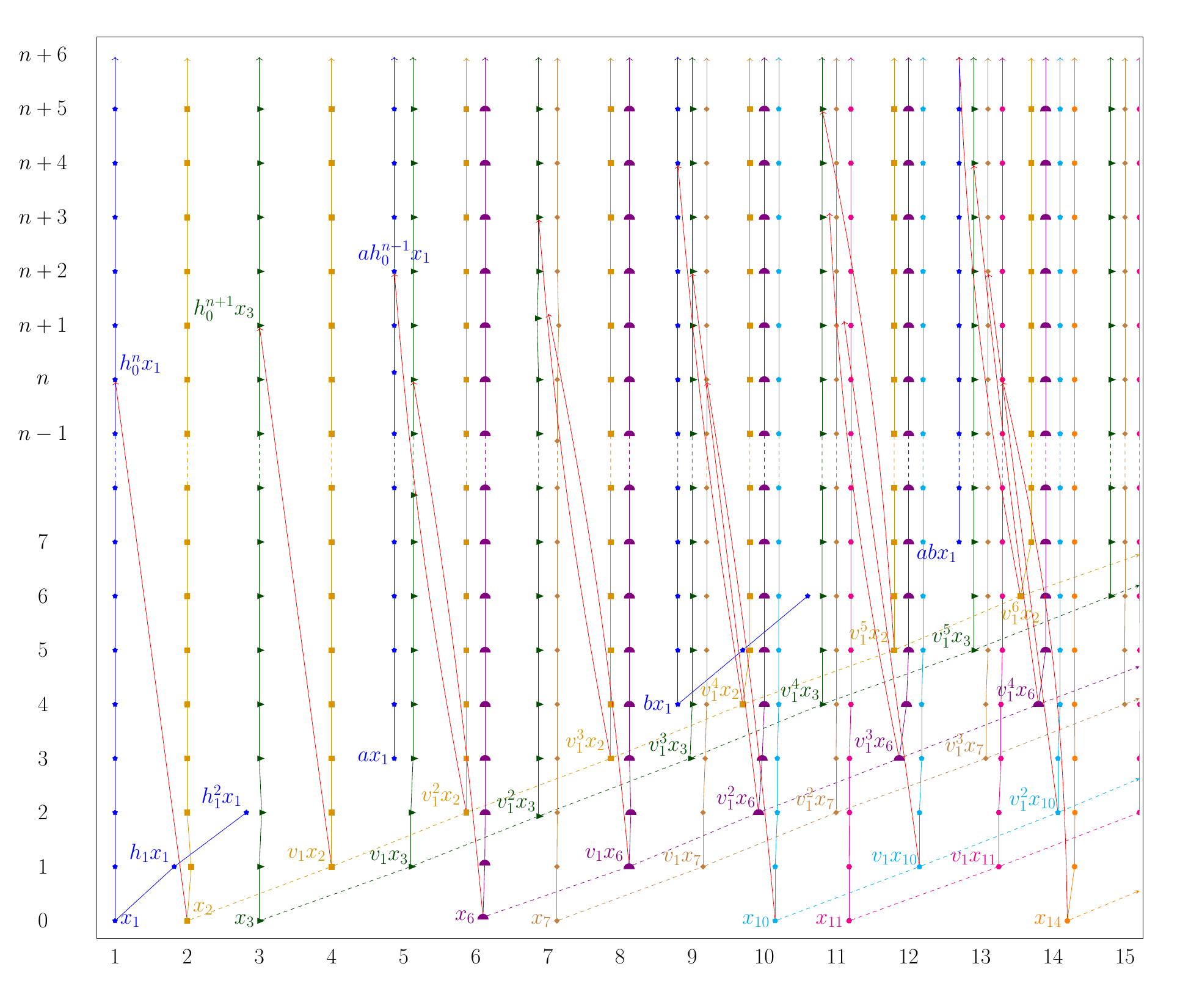}
	\caption{The \(E_n\)-page, with \(d_n\)-differentials (in red), of the Adams spectral sequence converging to 
	\(ko_k BC_{2^n}\). Each bullet represents \(\f_2\), and the vertical lines 
	represent multiplication by \(h_0\). Note that \(v_1\) is not an element of the \(E_n\)-page. However, \(H^*(BC_{2^n})\) has a submodule with a \(\ce(1)\)-module structure. The differentials shown between the classes indicate that the target is the sum of the two classes.}
	\label{fig:diff ko}
\end{figure}

There are no further nontrivial differentials through stem \(14\) for arbitrary
\(n\geq 2\), while for \(n=2\) the above list contains all nontrivial
differentials. Therefore, the spectral sequence collapses as shown in
\cref{fig:ko}.

\begin{remark}
Observe that it is enough to determine the differentials on the indecomposable classes \(v_1x_{2^k-2}\) for \(k > 1\). The multiplicative structure on the $E_2$-page then determines all remaining differentials. The preceding proposition determines these differentials for \(k < 4\), allowing us to resolve the Adams spectral sequence completely up to stem \(14\). Since the differentials on the classes \(v_1x_{2^k-2}\) for \(k > 3\) remain undetermined, we restrict our computations to this range.
\end{remark}

\begin{figure}[H] \vspace{-0.5cm}
   \includegraphics[scale=0.43]{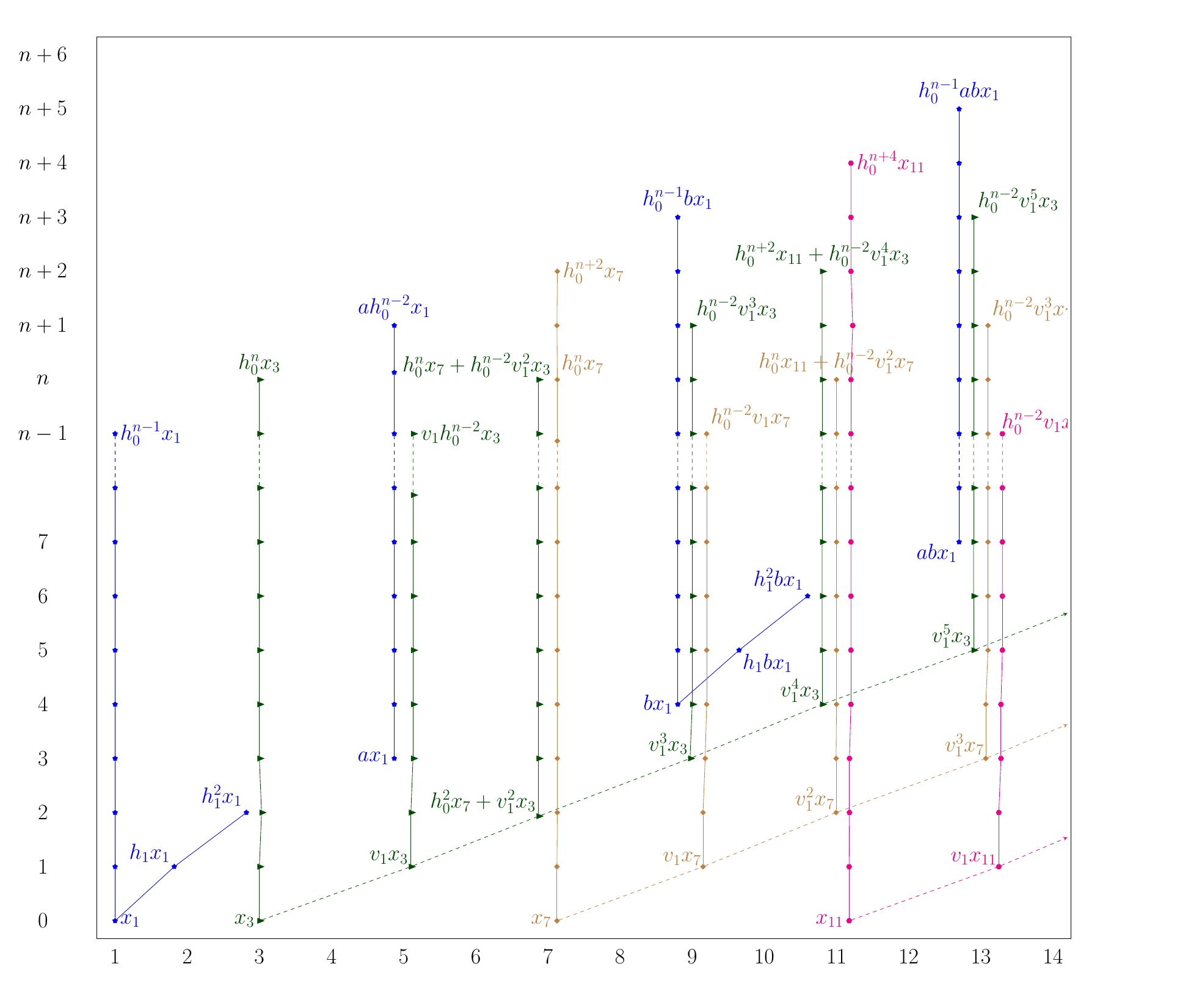}
	\vspace{-0.5cm} \caption{The $E_{\infty}$-page of the Adams spectral sequence converging to 
		$ko_k BC_{2^n}$. Each bullet represents $\f_2$ and vertical lines 
		represent $h_0$-multiplication.}
	\label{fig:ko}
\end{figure}

\subsection{Connective complex K-theory}\label{section: connective complex K-theory}
Finally, we compute the $ku$-homology of $BC_{2^n}$. 
We use the $ko$-homology computation from the preceding subsection, together with the natural map $ko\to ku$, to determine the differentials in the Adams spectral sequence converging to $ku_*(BC_{2^n})$.
We begin by computing the associated graded of \(ku_*(BC_{2^n})\) with respect to the Atiyah--Hirzebruch filtration.
\begin{proposition}
The associated graded of \(ku_*(BC_{2^n})\) is given by
\[
\operatorname{gr}^{*}_{AH}ku_k(BC_{2^n})
\cong
\begin{cases}
(\z/2^n)^{\oplus(j+1)}, & k=2j+1,\quad j\geq 0,\\
0, & \text{otherwise}.
\end{cases}
\]
\end{proposition}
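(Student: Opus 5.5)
We run the Atiyah--Hirzebruch spectral sequence of \cref{subsec:AHSS} with coefficients in \(ku_*\cong\z[v]\), \(|v|=2\). The \(E_1\)-page is
\[
E_1=\bigoplus_{i\ge1}\z[v]\{x[i]\},
\]
where \(v^j[i]\) sits in stem \(i+2j\).

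\textbf{The \(d_1\)-differentials.} The attaching map of each even cell to the preceding odd cell has degree \(2^n\). This gives
\[
d_1(v^j[2i])=2^n v^j[2i-1],\qquad i\ge1,\; j\ge0.
\]
Since \(\z[v]\) is torsion-free, \(d_1\) is injective on every even-filtration class. Hence \(E_2\) consists only of the classes \(\z/2^n\{v^j[2i-1]\}\), all lying in odd filtration.

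\textbf{Higher differentials vanish.} The \(\eta\)-attaching maps cannot produce \(d_2\)-differentials because \(\eta=0\) in \(ku_*\). More decisively, \(E_2\) lives only in odd filtration and odd stems. A differential \(d_r\) lowers the stem by one, so it would have to go from an odd stem to an even stem, and \(E_2\) is zero there. Therefore \(E_2=E_\infty\), and no Toda bracket analysis is needed.

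\textbf{Counting.} In stem \(k=2j+1\) the surviving classes are \(v^{m}[2i-1]\) with \(2i-1+2m=2j+1\), that is \(i=1,\dots,j+1\). This gives \(j+1\) copies of \(\z/2^n\). Even stems are zero. This is exactly the stated associated graded.

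There is essentially no obstacle. The only point to check is that \(d_1\) on \(\z[v]\)-modules is multiplication by \(2^n\) on every \(v\)-multiple, which follows from \(ku_*\)-linearity of the cellular differential. The genuine difficulty, resolving the additive extensions of these \(j+1\) copies of \(\z/2^n\), is deferred to the Adams spectral sequence argument and is not part of this statement.
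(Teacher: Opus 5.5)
Your proof is correct and takes the same route as the paper: the Atiyah--Hirzebruch spectral sequence with $d_1(x[2i])=2^n x[2i-1]$, followed by collapse at $E_2$ for degree reasons. You also spell out two points the paper leaves implicit: $E_2$ is concentrated in odd stems, so every higher differential has zero target, and the count $i=1,\dots,j+1$ gives the $j+1$ surviving copies of $\z/2^n$ in stem $2j+1$.
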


\begin{proof}
Consider the Atiyah--Hirzebruch spectral sequence from \cref{subsec:AHSS}:
\[
E_1
=
\bigoplus_{\substack{\text{cells of dimension }i\\ \text{in }BC_{2^n}}}
\Sigma^i ku_*
\;\Longrightarrow\;
\operatorname{gr}^{*}_{AH}ku_*(BC_{2^n}),
\]
 where $ku_{2j} = \z$ for $j\geq 0$ and 0 otherwise. 
The only nontrivial \(d_1\)-differentials are
\[
d_1(x[2i]) = 2^n x[2i-1], \quad i \ge 1,
\]
where \(x[i]\) denotes an element \(x \in H\z_*\) in filtration \(i\).
\begin{figure}[H]
	\includegraphics[width=0.87\linewidth]{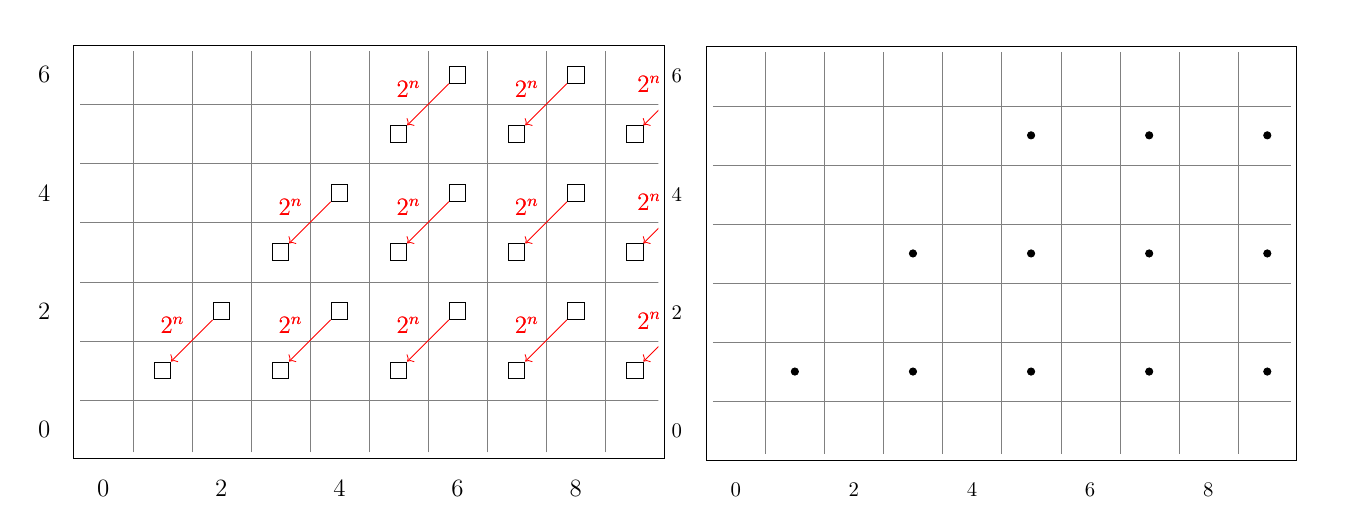}
	 \caption{The $E_1$-page (left) and the $E_\infty$-page (right) of the Atiyah--Hirzebruch spectral sequence converging to $\operatorname{gr}^{*}_{AH}ku_*(BC_{2^n})$. Each $\square$ represents a copy of $\z$ and each $\bullet$ represents a copy of $\z/2^n$.}\label{fig:kuAH}
\end{figure}
The spectral sequence collapses at \(E_2\) due to degree reasons.
\end{proof}

To determine the \(ku_*\)-module structure and resolve the extensions, we turn to the Adams spectral sequence
\[
E_2
=
\Ext_{\ca}^{**}\Big(H^*(BC_{2^n}\wedge ku)\Big)
\overset{\substack{\textit{K{\"u}nneth}}}{\cong}
\Ext_{\ca}^{**}\Big(H^*(BC_{2^n})\otimes H^*(ku)\Big)
\Rightarrow ku_*(BC_{2^n}).
\]
Recall from \cref{prop: classical description} that, as \(\ca\)-modules,
\[
H^*(ku)\cong \ca\ddash\ce(1).
\]
Applying the change-of-rings isomorphism, we may rewrite the \(E_2\)-term as
\[
E_2\cong \Ext_{\ce(1)}^{**}(H^*(BC_{2^n})).
\]
Since both \(\Sq^1\) and \(Q_1\) act trivially on \(H^*(BC_{2^n})\), we obtain
\[
E_2
\cong
\bigoplus_{k\geq 1}\Ext_{\ce(1)}^{**}(\f_2)\{\tilde{x}_k\}
\cong
\bigoplus_{k\geq 1}\f_2[h_0,v_1]\{\tilde{x}_k\}.
\]

The differentials in this spectral sequence are determined by those in the Adams spectral sequence for \(ko_*(BC_{2^n})\). We therefore begin by describing the map of Adams spectral sequences induced by the inclusion
\(
\ce(1)\xhookrightarrow{j}\ca(1).
\)

\begin{proposition}\label{prop: A1-to-E1-restriction}
On the \(E_2\)-pages, the induced map by the inclusion \(\ce(1)\xhookrightarrow{j}\ca(1)\),
\[
\Ext_{\ca(1)}^{**}(H^*(BC_{2^n}))
\xlongrightarrow{j^*}
\Ext_{\ce(1)}^{**}(H^*(BC_{2^n})),
\]
sends the indicated classes for \(k\geq 0\) and \(i=1,4k+2\), or \(4k+3\), as follows:

\[
\renewcommand{\arraystretch}{1.35}
\begin{array}{c|c}
y\in\Ext_{\ca(1)}^{**}(M)
&
j^*(y)\in\Ext_{\ce(1)}^{**}(M)
\\ \hline
x_i & \tilde{x}_i 
\\
h_1x_1 & 0 
\\
ax_1 & h_0v_1^2\tilde{x}_1
\\
bx_1 & v_1^4\tilde{x}_1
\\
v_1x_{4k+2}
&
h_0\tilde{x}_{4k+4}+v_1\tilde{x}_{4k+2}
\\
v_1x_{4k+3}
&
h_0\tilde{x}_{4k+5}+v_1\tilde{x}_{4k+3}
\end{array}
\]

More generally, for \(k,j\geq 0\) and
\(\varepsilon\in\{0,1\}\), the map sends

\[
\renewcommand{\arraystretch}{1.35}
\begin{array}{c|c}
\hspace{2cm}v_1^{2j+\varepsilon}x_{4k+2}
&
v_1^{2j+\varepsilon}\tilde{x}_{4k+2}
+\varepsilon h_0v_1^{2j}\tilde{x}_{4k+4}
\\
\hspace{2cm}v_1^{2j+\varepsilon}x_{4k+3}
&
v_1^{2j+\varepsilon}\tilde{x}_{4k+3}
+\varepsilon h_0v_1^{2j}\tilde{x}_{4k+5}.
\end{array}
\]

\end{proposition}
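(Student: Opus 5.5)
The plan is to use the $\ca(1)$-module splitting of \cref{cor: classicala1} and to compute $j^*$ one summand at a time. Restriction along $j$ is natural in the module, so $j^*$ respects the decomposition
\[
H^*(BC_{2^n})\cong \Sigma\f_2\oplus\bigoplus_{k\ge0}\bigl(\Sigma^{4k+2}\ca(1)\ddash\ce(1)\oplus\Sigma^{4k+3}\ca(1)\ddash\ce(1)\bigr).
\]
On the target side, \cref{cor: classicala0e1} shows that every $\ca(1)$-summand becomes a sum of trivial $\ce(1)$-modules after restriction. In particular $\Sigma^{4k+2}\ca(1)\ddash\ce(1)$ restricts to $\f_2\{\tilde x_{4k+2}\}\oplus\f_2\{\tilde x_{4k+4}\}$. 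It therefore suffices to treat two model cases, $\f_2$ and $N=\ca(1)\ddash\ce(1)$.

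\emph{Summand $\Sigma\f_2$.} Here $j^*$ is the restriction homomorphism $\Ext_{\ca(1)}(\f_2)\to\Ext_{\ce(1)}(\f_2)$ of Ext algebras. This is a ring map, so it is enough to find the images of $h_0,h_1,a,b$.
\begin{itemize}
\item $h_0\mapsto h_0$, because $\Sq^1=Q_0$ lies in $\ce(1)$.
\item $h_1\mapsto 0$, because the target is zero in the bidegree of $h_1$.
\item $a\mapsto h_0v_1^2$ and $b\mapsto v_1^4$. I expect these from the low-degree comparison of minimal resolutions, or from the fact that the map of Adams spectral sequences for $ko\to ku$ is an isomorphism on $E_2$ in $h_0$-towers in stems $4$ and $8$. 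There $\beta\mapsto v^4$ and $\alpha\mapsto 2v^2$.
\end{itemize}
This gives the rows for $x_1,h_1x_1,ax_1,bx_1$.

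\emph{Summand $N$.} The change-of-rings identification $\Ext_{\ca(1)}(N)\cong\Ext_{\ce(1)}(\f_2)$ is induced by the $\ce(1)$-linear unit $\iota\colon\f_2\to N$. Hence $j^*$ composed with the projection $N|_{\ce(1)}\to\f_2\{\tilde x_{4k+2}\}$ is the identity on $\f_2[h_0,v_1]$. This produces the leading term $v_1^{m}\tilde x_{4k+2}$.

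The remaining component is a map
\[
\theta\colon \Ext^{s,t}_{\ce(1)}(\f_2)\to\Ext^{s,t}_{\ce(1)}(\Sigma^2\f_2).
\]
In Adams grading it preserves both stem and filtration, so it sends $v_1^m$ (stem $2m$, filtration $m$) into the span of $h_0v_1^{m-1}$, and it is $h_0$-linear. I will compute $\theta(1)=0$ and $\theta(v_1)=h_0$ directly. For this I will write down the first two stages of a minimal $\ca(1)$-resolution of $N$ and restrict it to $\ce(1)$. Equivalently, I will check on cobar representatives that the class $v_1=[\xi_2]$-type cocycle, pulled back through $N\to N|_{\ce(1)}$, picks up the $\Sq^2$-connection $x_{4k+2}\mapsto x_{4k+4}$ as an $h_0$ term.

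\emph{Propagating to all powers of $v_1$.} To get the general formula with $\varepsilon\in\{0,1\}$, I will use linearity over $\Ext_{\ca(1)}(\f_2)$ together with $h_0$-injectivity on $\f_2[h_0,v_1]$, exactly as in \cref{remark: v1 indecomposable} and the end of the proof of \cref{prop:diff in ko}. The computation is
\[
h_0\, j^*(v_1^{m+2}x)=j^*(a\,v_1^mx)=h_0v_1^2\, j^*(v_1^mx),
\]
so $j^*(v_1^{m+2}x)=v_1^2 j^*(v_1^mx)$. Thus the $\varepsilon$-pattern reduces to the cases $m=0,1$ already computed.

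\emph{Summands $x_{4k+3}$.} These follow in either of two ways. One is the identical argument with an extra suspension. The other is to use the relation $v_1^jx_{4k+3}=x_1\cdot v_1^jx_{4k+2}$ from \cref{remark: Multi on E_2page}, provided the Pontryagin-product pairing is compatible with $j^*$.

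The main obstacle is the cross-term $\theta(v_1)=h_0\tilde x_{4k+4}$. It is invisible to the splitting argument and needs an explicit low-degree resolution computation that tracks how $\Sq^2\colon x_{4k+2}\mapsto x_{4k+4}$ interacts with $Q_1=\Sq^1\Sq^2+\Sq^2\Sq^1$. As a fallback, I would determine it indirectly from the known classical homotopy. The $ku$ Atiyah--Hirzebruch answer forces each $\tilde x_{4k+4}$-tower to be hit or to support differentials compatible with $d_n(v_1x_2)=h_0^{n+1}x_3$. That compatibility is impossible unless $j^*(v_1x_{4k+2})$ carries the $h_0\tilde x_{4k+4}$ term.
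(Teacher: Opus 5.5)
Your proposal is correct, but it gets the key cross-term differently from the paper. The paper does not compute $j^*$ through the change-of-rings unit. It writes $j^*(v_1x_r)=p\,h_0\tilde x_{r+2}+q\,v_1\tilde x_r$, since these are the only classes in that bidegree. It then reads off $q$ by composing with restriction to $\mathbb{E}(Q_1)$, which kills $h_0$. It reads off $p$ by composing with restriction to $\ca(0)$, which kills $v_1$. The value $p=1$ comes from the image $v_1x_r\mapsto h_0x_{r+2}^{\ca(0)}$, which is quoted from \cite[Theorem~13.30]{BDDHJ25}.

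In your route, $q=1$ is automatic, because $\iota^*\circ j^*$ is the change-of-rings isomorphism, and $p$ becomes a self-contained computation rather than a citation. The paper's detection trick avoids any resolution work, but it outsources exactly the nontrivial coefficient. Both arguments then propagate to higher powers of $v_1$ the same way, using $a$-linearity and $h_0$-injectivity.

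The step you flag as the main obstacle is short.
\begin{itemize}
\item Since $\Ext^{1,3}_{\ca(1)}(N)\cong\Ext^{1,3}_{\ce(1)}(\f_2)$ is one-dimensional, $v_1$ is represented by the nonsplit extension $E=\f_2\{g_0,g_2,z\}$ with $\Sq^2g_0=g_2$ and $\Sq^1g_2=z$.
\item In $E$ we have $Q_1g_0=\Sq^1\Sq^2g_0+\Sq^2\Sq^1g_0=z$.
\item So restricting $E$ to $\ce(1)$ and pulling back to the two trivial summands of $N|_{\ce(1)}$ gives the $Q_1$-extension on $g_0$ and the $\Sq^1$-extension on $g_2$. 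That is, $\theta(v_1)=h_0$.
\end{itemize}

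Two smaller points.
\begin{itemize}
\item In stem $4$, the map $\Ext_{\ca(1)}(\f_2)\to\Ext_{\ce(1)}(\f_2)$ is not an isomorphism on the $h_0$-tower, since $v_1^2$ is not hit. Argue instead that $c(\alpha)=2v^2$ has Adams filtration exactly $3$. Alternatively, once $b\mapsto v_1^4$ is known, note that $j^*(a)^2=h_0^2v_1^4\neq 0$.
\item Your fallback via differentials is unnecessary and weaker. It only pins down the $x_2$ case. It also runs against the paper's order of deduction, where this proposition is what produces $d_n(\tilde x_4)$.
\end{itemize}
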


\begin{proof}
Consider the two chains of inclusions
\[
\ca(0) \xhookrightarrow{i} \ce(1)\xhookrightarrow{j}\ca(1)
\qquad\text{and}\qquad
\mathbb E(Q_1)\xhookrightarrow{k}\ce(1)\xhookrightarrow{j}\ca(1),
\]
where $\e(Q_1)$ is the exterior algebra generated by $Q_1$. The induced maps on Ext allow us to compare a class over \(\ce(1)\) with
its images over \(\ca(0)\) and \(\mathbb E(Q_1)\). To distinguish the different copies of the same class, we write \(x_i^{\ca(0)}\) , \(x_i^{\ce(1)}\) and \(x_i^{\mathbb E(Q_1)}\) for the corresponding classes over \(\ca(0)\), \(\ce(1)\) and \(\mathbb E(Q_1)\), respectively.

The images of the map induced by the inclusion
\(\ca(0)\xhookrightarrow{j\circ i}\ca(1)\) are given in
\cite[Theorem~13.30]{BDDHJ25}. The induced  
 map $i^*$ from \(\ce(1)\)-Ext to \(\ca(0)\)-Ext sends
\[
h_0^{\ce(1)}\longmapsto h_0^{\ca(0)}\qquad\text{and}\qquad v_1^{\ce(1)}\longmapsto 0.
\]
The map $k^*$ from \(\ce(1)\)-Ext to
\(\mathbb E(Q_1)\)-Ext sends
\[
h_0^{\ce(1)}\longmapsto 0\qquad\text{and}\qquad v_1^{\ce(1)}\longmapsto v_1^{E(Q_1)},
\]
while the map $(k^*\circ j^*)$ from \(\ca(1)\)-Ext to \(\mathbb E(Q_1)\)-Ext sends
\[
  \qquad \qquad \qquad h_0^{\ca(1)}\longmapsto 0\qquad\text{and}\qquad v_1x_i^{\ca(1)}\longmapsto v_1^{E(Q_1)}x_i^{E(Q_1)}, \quad \text{ for } i=1,4k+2,4k+3.
\]

These give the relevant images summarized in the following table, where $M$ denotes \(H^*(BC_{2^n})\), \(i=1,4k+2,4k+3\) and \(r=4k+2,4k+3\).
\[
\renewcommand{\arraystretch}{1.35}
\begin{array}{c|c|c|c}
y\in\Ext_{\ca(1)}^{**}(M)
&
j^*(y)\in\Ext_{\ce(1)}^{**}(M)
&
k^*\circ j^*(y)\in\Ext_{\mathbb E(Q_1)}^{**}(M)
&
i^*\circ j^*(y)\in\Ext_{\ca(0)}^{**}(M)
\\ \hline
x_i & x_i^{\ce(1)} & x_i^{\mathbb E(Q_1)} & x_i^{\ca(0)}
\\
h_1x_1 & 0 & 0 & 0
\\
ax_1 & h_0v_1^2x_1^{\ce(1)} & 0 & 0
\\
bx_1 & v_1^4x_1^{\ce(1)} & v_1^4x_1^{\mathbb E(Q_1)} & 0
\\
v_1x_r
&
p\,h_0x_{r+2}^{\ce(1)}+q\,v_1x_r^{\ce(1)}
&
v_1x_r^{\mathbb E(Q_1)}
&
h_0x_{r+2}^{\ca(0)}
\end{array}
\]

It remains to determine \(p,q\in\f_2\) in the last row of the table.
Mapping the proposed image of \(v_1x_r\) from \(\ce(1)\)-Ext to
\(\ca(0)\)-Ext gives \(p\,h_0x_{r+2}^{\ca(0)}\). This must agree with \(i^*\circ j^*(v_1x_r)\), which is \(h_0x_{r+2}^{\ca(0)}\).
Therefore, \(p=1\).

Similarly, mapping to \(\mathbb E(Q_1)\)-Ext gives
\(q\,v_1x_r^{\mathbb E(Q_1)}\). This agrees with \(k^*\circ j^*(v_1x_r)\), which is
\(v_1x_r^{\mathbb E(Q_1)}\), so \(q=1\). Hence
\[
v_1x_r\longmapsto
h_0x_{r+2}^{\ce(1)}+v_1x_r^{\ce(1)}.
\]

Finally, the map is \(a\)-linear, and \(a\) maps to \(h_0v_1^2\).
Combining \(a\)-linearity with \(h_0\)-linearity gives
\(
h_0d(v_1^2x)=h_0v_1^2d(x).
\)
Since multiplication by \(h_0\) is injective on the target, the map is
\(v_1^2\)-linear on these classes. It follows that
\[
v_1^{2j}x_r\longmapsto v_1^{2j}x_r^{\ce(1)},\qquad
v_1^{2j+1}x_r\longmapsto
v_1^{2j+1}x_r^{\ce(1)}+h_0v_1^{2j}x_{r+2}^{\ce(1)}.
\]
Taking \(r=4k+2\) and \(r=4k+3\), and identifying
\(x_i^{\ce(1)}\) with \(\tilde{x}_i\), gives the stated formulas.
\end{proof}

\begin{remark}
Another important point is the multiplicative structure on the
\(E_2\)-page of the Adams spectral sequence converging to
\(ku_*(BC_{2^n})\). This is induced by the multiplicative structure on
the \(E_2\)-page of the Adams spectral sequence converging to
\(H\z_*(BC_{2^n})\), which is determined in
\cite[Proposition~13.18]{BDDHJ25}. The relations are
\[
\tilde{x}_i^2=0,\qquad
\tilde{x}_i\cdot\tilde{x}_j
=
\binom{i+j}{i}\tilde{x}_{i+j}.
\]
\end{remark}

\begin{proposition}
The nontrivial differentials in the Adams spectral sequence converging
to \(ku_*(BC_{2^n})\) are given as follows.
\begin{enumerate}
	\item For \(n=2\),
	\[
	d_2(\tilde{x}_2)=h_0^2\tilde{x}_1 \qquad
	\text{ and } \qquad \text{ for } k>1,\;
	d_2(\tilde{x}_{2k})
	=
	h_0^2\tilde{x}_{2k-1}
	+h_0v_1\tilde{x}_{2k-3}.
	\]

	\item For \(n>2\), through stem \(14\),
	\[
	\hspace{0.6cm}d_n(\tilde{x}_2)=h_0^n\tilde{x}_1,
	\qquad
	\text{ and } \qquad \text{ for } 2\leq k\leq 7,\;
	d_n(\tilde{x}_{2k})
	=
	h_0^n\tilde{x}_{2k-1}
	+h_0^{n-1}v_1\tilde{x}_{2k-3}.
	\]
\end{enumerate}
These differentials are \(h_0\)-linear and \(v_1\)-linear.
\end{proposition}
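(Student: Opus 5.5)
We transport the differentials of \cref{prop:diff in ko} along the map of Adams spectral sequences induced by $ko\to ku$. On $E_2$-pages this map is the restriction $j^*$ of \cref{prop: A1-to-E1-restriction}. We then use $h_0$- and $v_1$-linearity to spread these differentials over the whole $E_2$-page. Since $ku_*$ is $\z[v_1]$ and $\Ext_{\ce(1)}(\f_2)=\f_2[h_0,v_1]$ is concentrated in even stems, the $E_2$-page $\bigoplus_k\f_2[h_0,v_1]\{\tilde x_k\}$ is a free $\f_2[h_0,v_1]$-module. Moreover, $v_1$ is a permanent cycle detecting the Bott class. Hence every differential is $h_0$- and $v_1$-linear, and it suffices to determine $d_r(\tilde x_{2k})$. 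The odd classes $\tilde x_{2k+1}$ lie in odd stems. By the Pontryagin product relations $\tilde x_1\tilde x_{2k}=\tilde x_{2k+1}$, they are products of $\tilde x_1$ with even classes. The class $\tilde x_1$ is a permanent cycle, being the image of the bottom cell. So the odd classes survive once the even ones are handled.

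\textbf{Step 1: the first differential.} For $\tilde x_2$, compare with the Atiyah--Hirzebruch computation: $\operatorname{gr}ku_1=\z/2^n$, so the $\tilde x_1$-tower must be truncated at height $n$. The only candidate source is $\tilde x_2$, so $d_n(\tilde x_2)=h_0^n\tilde x_1$. This is also the image under $j^*$ of differential (1) of \cref{prop:diff in ko}. Shorter differentials $d_r$ with $r<n$ would produce too small a group.

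\textbf{Step 2: naturality.} For $k\ge2$, use $j^*(x_{2k})=\tilde x_{2k}$ when $2k\equiv2\pmod4$. Apply $j^*$ to (3), (5), (7) and to the $n=2$ formula (8). For example,
\[
d_n(\tilde x_6)=j^*(h_0^{n-1}v_1x_3)=h_0^{n-1}(h_0\tilde x_5+v_1\tilde x_3)=h_0^n\tilde x_5+h_0^{n-1}v_1\tilde x_3 .
\]
For $\tilde x_{4k}$ with $k\ge1$, which is not in the image of $x$-classes, apply $j^*$ to (2), (4), (6), (10). This gives
\[
d_n(j^*(v_1x_{4k-2}))=d_n(v_1\tilde x_{4k-2}+h_0\tilde x_{4k}).
\]
Subtract the already known $v_1 d_n(\tilde x_{4k-2})$ and divide by $h_0$, which is injective on the free target. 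For instance, from (2),
\[
h_0d_n(\tilde x_4)=j^*(h_0^{n+1}x_3)-v_1h_0^n\tilde x_1=h_0^{n+1}\tilde x_3+h_0^nv_1\tilde x_1,
\]
so $d_n(\tilde x_4)=h_0^n\tilde x_3+h_0^{n-1}v_1\tilde x_1$.

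Naturality only says that $j^*$ commutes with $d_n$ if no shorter differential occurs in $ku$. So first rule out $d_r$ with $r<n$, using the order count $\operatorname{gr}ku_{2j+1}=(\z/2^n)^{j+1}$. Alternatively, use the Leibniz rule with $\tilde x_2\tilde x_{2k}=\binom{2k+2}{2}\tilde x_{2k+2}$ to propagate.

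\textbf{Step 3: collapse.} After these $d_n$, each odd stem $2j+1$ has $j+1$ towers truncated to total length $n(j+1)$. This matches the Atiyah--Hirzebruch order, so $E_{n+1}=E_\infty$. Indeed, all surviving classes lie in odd stems, so no further differentials are possible.

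\textbf{Main obstacle.} The delicate point is Step 2 at the level of pages. In $ko$ the $E_2$-page has $h_1$-classes and non-free structure, so one must check that the differentials from \cref{prop:diff in ko} really are $d_n$ in the $ku$-spectral sequence and are not hit earlier. One must also check that $j^*$ is compatible through $E_n$. I would handle this by first proving that $E_2=E_n$ for $ku$ via the rank count, and then invoking naturality. The restriction to stem $14$ for $n>2$ is inherited from \cref{prop:diff in ko}.
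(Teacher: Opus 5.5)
Your overall route matches the paper's. The differential $d_n(\tilde{x}_2)=h_0^n\tilde{x}_1$ is forced by $ku_1(BC_{2^n})\cong\z/2^n$, and $d_n(\tilde{x}_{4k})$ comes from applying $j^*$ to $d_n(v_1x_{4k-2})$, using $j^*(v_1x_{4k-2})=h_0\tilde{x}_{4k}+v_1\tilde{x}_{4k-2}$ and cancelling $h_0$. The one variation is $\tilde{x}_{4k+2}$. You push (3), (5), (7), (8) forward directly. The paper instead writes $\tilde{x}_{4k+2}=\tilde{x}_2\tilde{x}_{4k}$ (note $\binom{4k+2}{2}$ is odd) and applies the Leibniz rule. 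Both give the same formula; the paper's version needs only the $ko$ differentials on the indecomposables $v_1x_{4k-2}$.

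The gap is in the step you flag as the main obstacle. You propose to rule out $d_r$ with $r<n$ by the order count, but this fails once $n\geq 3$. The count $\operatorname{gr}ku_{2j+1}=(\z/2^n)^{j+1}$ fixes only the total number of surviving classes in stem $2j+1$. It does not fix how the truncation is distributed among the towers. For example, take $2\le r<n$ and the hypothetical pattern $d_r(\tilde{x}_4)=h_0^{r-1}v_1\tilde{x}_1$. Then $d_n(v_1\tilde{x}_2)=h_0^nv_1\tilde{x}_1$ is already zero in $E_n$, so $v_1\tilde{x}_2$ can support $d_{2n-r}(v_1\tilde{x}_2)=h_0^{2n-r+1}\tilde{x}_3$. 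This pattern leaves exactly $2n$ classes in stem $3$. It maps compatibly to the $H\z$ spectral sequence, where $v_1\mapsto 0$. The resulting group $\z/2^{2n-r+1}\oplus\z/2^{r-1}$ even admits a filtration with both quotients $\z/2^n$. Your Leibniz alternative does not help either: $\tilde{x}_2\tilde{x}_{4k-2}=\binom{4k}{2}\tilde{x}_{4k}=0$, so it never controls $\tilde{x}_{4k}$, and in particular not $\tilde{x}_4$ or $\tilde{x}_8$.

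What actually rules out short differentials is naturality along $j^*$ on every page, not just on $E_n$. By \cref{prop:diff in ko}, the $ko$ spectral sequence has no $d_r$ with $r<n$. Assume inductively that $E_r^{ku}=E_2^{ku}$. Then $d_r$ vanishes on the images $\tilde{x}_1,\tilde{x}_{4k+2},\tilde{x}_{4k+3}$ of $j^*$. It also gives $h_0d_r(\tilde{x}_{4k+4})=d_r\bigl(j^*(v_1x_{4k+2})\bigr)+v_1d_r(\tilde{x}_{4k+2})=0$, and likewise for $\tilde{x}_{4k+5}$. Since $E_2^{ku}$ is $h_0$-torsion-free, $d_r=0$. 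The paper takes this for granted when it works on $E_n$. With this replacement for the order-count step, your argument goes through.
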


\begin{proof}
First, for all \(n\), comparison with the Atiyah--Hirzebruch spectral sequence gives
\[
ku_1(BC_{2^n})\cong\z/2^n.
\]
To recover this group, the Adams spectral sequence must support the differential
\[
d_n(\tilde{x}_2)=h_0^n\tilde{x}_1.
\]
Since \(v_1\) is a permanent cycle, \(v_1\)-linearity gives
\[
d_n(v_1\tilde{x}_2)=h_0^nv_1\tilde{x}_1.
\]

Under the map of Adams spectral sequences induced by the inclusion \(\ce(1)\hookrightarrow\ca(1)\) described in \cref{prop: A1-to-E1-restriction}, we have
\[
v_1x_2\longmapsto h_0\tilde{x}_4+v_1\tilde{x}_2.
\]

By \cref{prop:diff in ko},
\(
d_n^{ko}(v_1x_2)=h_0^{n+1}x_3.
\)
Naturality therefore gives
\[
h_0d_n^{ku}(\tilde{x}_4)+v_1d_n^{ku}(\tilde{x}_2)
=h_0^{n+1}\tilde{x}_3.
\]
Since multiplication by \(h_0\) is injective on the relevant target group, it follows that
\[
d_n^{ku}(\tilde{x}_4)
=h_0^n\tilde{x}_3+h_0^{n-1}v_1\tilde{x}_1.
\]

Next, using \(\tilde{x}_6=\tilde{x}_2\tilde{x}_4\) and the Leibniz rule, we obtain
\[
d_n^{ku}(\tilde{x}_6)
=h_0^n\tilde{x}_1\tilde{x}_4
+\tilde{x}_2\Big(h_0^n\tilde{x}_3+h_0^{n-1}v_1\tilde{x}_1\Big)
=h_0^n\tilde{x}_5+h_0^{n-1}v_1\tilde{x}_3.
\]

Similarly,
\(
v_1x_6\longmapsto h_0\tilde{x}_8+v_1\tilde{x}_6,
\) 
and, by \cref{prop:diff in ko},
\[
d_n^{ko}(v_1x_6)
=h_0^{n+1}x_7+h_0^{n-1}v_1^2x_3.
\]
Naturality gives
\(
h_0d_n^{ku}(\tilde{x}_8)+v_1d_n^{ku}(\tilde{x}_6)
=h_0^{n+1}\tilde{x}_7+h_0^{n-1}v_1^2\tilde{x}_3.
\)
\vspace{0.1cm}

Substituting the differential on \(\tilde{x}_6\) and again canceling \(h_0\), we obtain
\[
d_n^{ku}(\tilde{x}_8)
=h_0^n\tilde{x}_7+h_0^{n-1}v_1\tilde{x}_5.
\]
Repeating these two arguments determines the remaining differentials in the stated range. 
\end{proof}
\begin{figure}[H] \vspace{-0.5cm}
	\hspace{-1.15cm}  \includegraphics[scale=0.45]{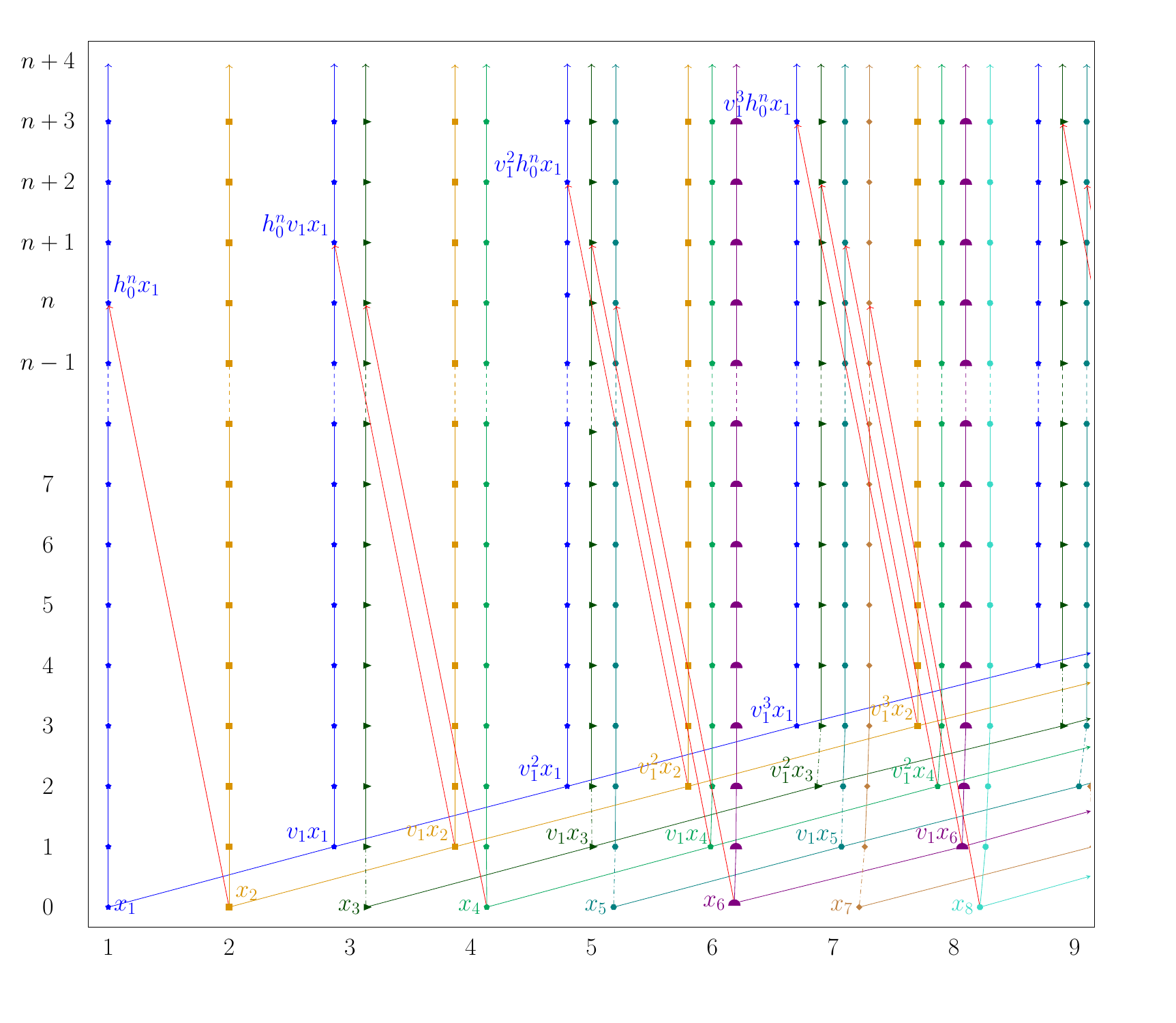}
	\vspace{-0.5cm} \caption{The $E_n$-page, with the $d_n$-differentials shown in red of the Adams spectral sequence converging to $ku_* BC_{2^n}$. Each bullet represents a copy of $\f_2$, and vertical solid lines denote multiplication by $h_0$. Dash--dot--dash lines indicate that a generator $\kappa_i$ maps to $h_0\kappa_i + v_1\kappa_{i-1}$; for example, $x_3 \mapsto v_1x_1 + h_0x_3$. Slanted lines represent multiplication by $v_1$.}
	\label{fig:E2 page of ku}
\end{figure}

\begin{remark}
Again, observe that it is enough to determine the differentials on the indecomposable classes \(\tilde{x}_{2^k}\). 
The multiplicative structure on the \(E_2\)-page, together with $ko$-differentials, then determines all remaining differentials. 
Therefore, the differentials remain undetermined for \(k>3\) and we restrict our computations to this range.
\end{remark}

These differentials propagate by \(h_0\)- and \(v_1\)-linearity, and then the spectral sequence collapses at \(E_{n+1}\) through stem \(14\), since the \(E_{n+1}\)-page is concentrated in odd stems.

There are higher differentials, so the module structure can be read directly from the Adams \(E_\infty\)-term shown in \cref{fig:ku}.

\begin{figure}[H] \vspace{-0.5cm}
	\hspace{-1.15cm}  \includegraphics[scale=0.5]{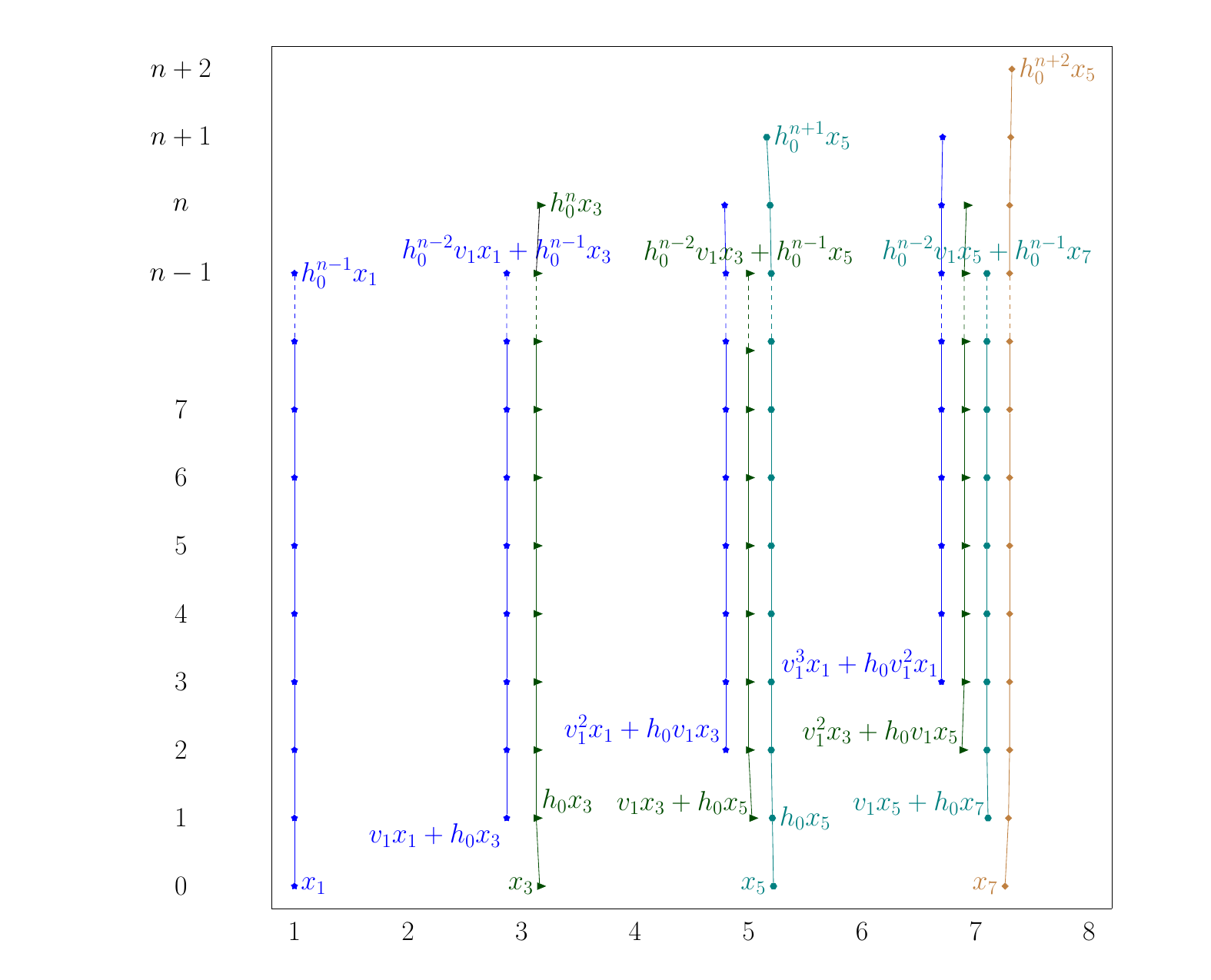}
	\vspace{-0.5cm} \caption{The  $E_{\infty}$-page of the Adams spectral sequence converging to $ku_* BC_{2^n}$. Each bullet represents a copy of $\f_2$, and vertical lines denote multiplication by $h_0$. $v_1$-multiplication is exists, but hidden in the chart.}
	\label{fig:ku}
\end{figure}


\section{Integral Motivic Homology}\label{Integral Homology}

We begin with computing the integral homology \(H\z_{**}(B\mu_{2^n})\) using the known structure of \(H^{**}(B\mu_{2^n})\) as $\ca^F(0)$-module. 

\subsection{Integral motivic homology over algebraically closed fields}
Note that it is enough to compute the integral motivic homology over $\c$ since the Lefschetz principle described in \cref{subsec:extention} allows us to extend the computation to any algebraically closed field of characteristic not equal to 2.
We use the same strategy as in the classical case.
Recall from \cref{prop: motivic description} that, analogously, we have
\[
H^{**}(H\z^{\c}) \cong \ca^{\c} \ddash \ca^{\c}(0),
\]
and from \cref{cor: a0e1} we have
\[
H^{**}B\mu_{2^n} \;\cong\; \bigoplus_{k \geq 1} \;\Sigma^{k,\lfloor \frac{k+1}{2}\rfloor} \m^{\c}_2.
\]

Using these to simplify the $E_2$- page of the motivic Adams spectral sequence \cref{ASS} converging to $H\z^{\c}_{**}B\mu_{2^n}$, we get
\[
E_2
 \cong  \Ext_{\ca^{\c}(0)}^{***}(H^{**}B\mu_{2^n}) \\
 \cong\bigoplus_{k \geq 1} \,\m^{\c}_2[h_0]\{x_k\}
\Rightarrow H\z^{\c}_{**}B\mu_{2^n}
\]
where $|x_k|=(k,\lfloor \tfrac{k+1}{2}\rfloor)$.

The Betti realization functor from motivic spectra over $\c$ to classical spectra discussed in \cref{sec:coho} gives that the differentials coincide with the classical ones:
\[
d_n(x_{2k}) = h_0^{n} x_{2k-1},
\]
and are $h_0$-linear.
This proves the following \cref{thm:c-HZ}.

\begin{thm}\label{thm:c-HZ}
The integral  $\c$-motivic homology groups of $B\mu_{2^n}$ are given by
\[
H\z_{**}^\c B\mu_{2^n} \;\cong\; \bigoplus_{k \geq 1} \;\Sigma^{2k+1,k+1}\, H\z_{**}^\c/2^n[\tau].
\]
\end{thm}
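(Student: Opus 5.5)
We compute the motivic Adams spectral sequence described just above the statement and then read off the $\m_2^\c$-module structure. By \cref{prop: motivic description} and the change-of-rings isomorphism, the $E_2$-page is $\Ext_{\ca^\c(0)}(H^{**}B\mu_{2^n})$. Since $\Sq^1$ acts trivially (\cref{cor: a0e1}), this splits as $\bigoplus_{k\ge1}\m_2^\c[h_0]\{x_k\}$. Here $h_0$ has stem $0$, filtration $1$, weight $0$; $\tau$ has stem $0$, weight $-1$; and $x_k$ sits in stem $k$, weight $\lfloor\frac{k+1}{2}\rfloor$.

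\textbf{Step 1: the differentials.} Betti realization sends $\tau\mapsto1$ and $x_k\mapsto x_k$, and gives a map of spectral sequences to the classical one computed in \cref{sec:classical}. For the $d_r$ with $r<n$: in the tridegree of $d_r(x_{2k})$, the only candidates are $\tau^j h_0^r x_{2k-1}$, and these realize nontrivially. Comparing with the classical chart, where such differentials vanish, rules them out. For $d_n$:
\begin{itemize}
\item The classical $d_n(x_{2k})=h_0^nx_{2k-1}$ forces $d_n(x_{2k})=\tau^j h_0^nx_{2k-1}$ for the unique $j$ matching weights.
\item Since $x_{2k}$ and $x_{2k-1}$ both have weight $k$, we get $j=0$.
\end{itemize}
By $\m_2^\c[h_0]$-linearity this determines all differentials. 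Alternatively, one can run the motivic Atiyah--Hirzebruch spectral sequence of \cref{subsec:AHSS}, where $d_1(x[2k])=h^nx[2k-1]=2^nx[2k-1]$ over $\c$, and match ranks.

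\textbf{Step 2: collapse.} The $E_{n+1}$-page is $\bigoplus_{k\ge1}\f_2[\tau,h_0]/(h_0^n)\{x_{2k-1}\}$. It is concentrated in odd stems, so every further differential, which lowers the stem by one, vanishes. Hence $E_{n+1}=E_\infty$.

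\textbf{Step 3: extensions.} Each tower $\f_2[h_0]/h_0^n\{\tau^jx_{2k-1}\}$ must assemble to a $\z/2^n$, i.e.\ $h_0$ must detect multiplication by $2$. In a fixed stem and weight the tower is the only thing present, so there are no exotic extensions across different $x$'s. The $\tau$-action is detected directly, since $\tau$ acts freely on $E_\infty$. This yields $\bigoplus_k\Sigma^{2k-1,k}H\z^\c_{**}/2^n$, which is $\tau$-free. Reindexing $k\mapsto k+1$ (so $x_{2k+1}$ lies in bidegree $(2k+1,k+1)$) matches the statement.

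\textbf{Main obstacle.} The delicate point is Step 1: ensuring that no $\tau$-multiple ambiguity or shorter differential occurs. This is handled by the weight count above together with the injectivity of Betti realization on $\tau$-free classes (inverting $\tau$ recovers the classical spectral sequence).
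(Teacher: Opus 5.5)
Your proposal is correct and follows the same route as the paper: change of rings to $\Ext_{\ca^\c(0)}$ with trivial $\Sq^1$, importing $d_n(x_{2k})=h_0^nx_{2k-1}$ from the classical computation via Betti realization and $h_0$-linearity, then collapse. Your weight argument ruling out $\tau$-multiples and shorter differentials, and your extension check, fill in details the paper leaves implicit. One correction: reindexing your (correct) answer $\bigoplus_{k\ge1}\Sigma^{2k-1,k}\z/2^n[\tau]$ gives $\bigoplus_{k\ge0}\Sigma^{2k+1,k+1}$, not $k\ge1$. So the statement as printed drops the $\Sigma^{1,1}$ summand coming from $x_1$, which is a typo you should flag rather than claim a match.
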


The theorem shows that the Adams chart for $H\z_{**}^\c(B\mu_{2^n})$ coincides with the Adams chart for $H\z_{*}(BC_{2^n})$, illustrated in \cref{fig:HZ}, where each $\bullet$ now represents $\f_2[\tau]$ instead of $\f_2$.

\subsection{Integral motivic homology over real closed fields}

Over $\r$, the presence of the class $\rho$ in $\r$-motivic cohomology introduces additional structure. But the overall strategy parallels the complex and classical calculations. Again, we only need to compute the integral motivic homology over $\r$ since the Lefschetz principle described in \cref{subsec:extention} allows us to extend the computation to any real closed field.  

\begin{remark}  
The $\r$-motivic integral homology of a point is  
\(
H\z^{\r}_{**} \cong \z[\rho,\tau^2]/ h\rho.
\)

For computing this, \cite{Hil11} uses the $\rho$-Bockstein spectral sequence: 
\[
E_1 = \Ext_{\ca(0)^\c}^{***}(\m_2^{\c})[\rho] \cong \f_2[\tau,\rho,h_0] \Rightarrow \Ext_{\ca^{\r}(0)}^{***}(\m_2^{\r}) ,\] 
where $ |\tau|=(0,0,1),\ |\rho|=(0,1,1),\ |h_0|=(0,1,0)$
and differentials $d_1(\tau) = \rho h_0$. The spectral sequence collapses immediately, giving  
\[
\Ext_{\ca^{\r}(0)}^{***}(\m_2^{\r}) \cong \f_2[\tau^2,\rho,h_0]/(\rho h_0).
\]  

Passing to the Adams spectral sequence, we find that it collapses at $E_2$ for degree reasons, yielding 
\(
H\z_{**}^{\r},
\)
as a ring.
\end{remark}  

We now use this as input for the Atiyah--Hirzebruch spectral sequence.
\begin{proposition}
	The associated graded of  $H\z_{**}^{\r}(B\mu_{2^n})$ with respect to the Atiyah--Hirzebruch filtration is 
	
\[
gr^*_{AH} H\z_{k,*}^{\r}B\mu_{2^n} \cong 
\begin{cases}
	\z/ 2^n[\tau^2] \bigoplus_{j \geq k } \f_2[\tau^2] & k >0  \text{ even }, \\
	\bigoplus_{ j \geq k } \f_2[\tau^2] & k > 1 \text{ odd },\\
	\bigoplus_{j \geq 0 } \f_2[\tau^2] & k \leq 1.
\end{cases}
 \] 
\end{proposition}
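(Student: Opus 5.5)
The plan is to run the Atiyah--Hirzebruch spectral sequence of \cref{subsec:AHSS} with $R=H\z^{\r}$. I will use the cell structure of $B\mu_{2^n}$ (one cell $x_i$ in bidegree $(i,\lfloor\frac{i+1}{2}\rfloor)$) and the coefficient ring $H\z^{\r}_{**}\cong\z[\rho,\tau^2]/(h\rho)$ from the preceding remark. Since $\eta$ acts as zero on $H\z^{\r}$, the element $h=2-\rho\eta$ acts on $H\z^{\r}_{**}$ as multiplication by $2$. So each cell contributes one copy $\z[\tau^2]$ in its own stem, together with copies $\rho^j\f_2[\tau^2]$, $j\ge 1$, in stem $i-j$.

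\textbf{Step 1: the $d_1$-differentials.} I will compute $d_1(x[2i])=h^n\cdot x[2i-1]=2^n x[2i-1]$. On the integral summand $\z[\tau^2]\{x[2i]\}$ this map is injective, with cokernel $\z/2^n[\tau^2]$ on the odd cell $2i-1$. On the $\rho$-divisible summands it vanishes, because $2\rho^j=0$. Hence all classes $\rho^j[i]$ with $j\ge1$ survive to $E_2$, and the only integral survivors are the $\z/2^n[\tau^2]$'s on the odd cells. The computation is $\tau^2$-linear, since $\tau^2$ is a permanent cycle in the coefficients.

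\textbf{Step 2: vanishing of the higher differentials.} The $d_2$'s are given by multiplication by $\eta$, which is zero on $H\z^{\r}_{**}$, so $d_2=0$. For $d_3$ (and, in the same way, the higher $d_r$), I would argue that after smashing with $H\z^{\r}$ the $\eta$-attaching maps become null as maps of $H\z^{\r}$-modules. Consequently the brackets $\langle x,\eta,h^n\rangle$ and $\langle x,h^n,\eta\rangle$ contain $0$, and it remains to rule out their indeterminacy. This is the step I expect to be the main obstacle, since degree arguments alone do not exclude $\rho$-multiples as targets.

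As a first attempt I would compare ranks with the motivic Adams $E_2$-page. Its $E_2$-page is $\bigoplus_k \Ext_{\ca^{\r}(0)}(\m_2^{\r})\{x_k\}$, computed by the $\rho$-Bockstein spectral sequence with $d_1(\tau)=\rho h_0$. This gives an upper bound in each bidegree that already equals the size of the $E_2$-page of Step 1, so no further differentials can occur. As a fallback I would use $\rho$- and $\tau^2$-linearity: any nonzero higher differential would have to hit a $\rho$-divisible class, and $\rho$-linearity would then force a nonzero differential on a $\rho$-free class mapping to $\c$. Base change to $\c$, together with \cref{thm:c-HZ}, rules this out.

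\textbf{Step 3: counting.} Finally I read off the associated graded stem by stem. For stem $k\le1$, every cell $i\ge1$ contributes exactly one class $\rho^{i-k}\f_2[\tau^2]$, giving $\bigoplus_{j\ge0}\f_2[\tau^2]$. For $k>1$, only the cells $j\ge k$ contribute. In addition, the surviving integral class $\z/2^n[\tau^2]$ appears in the stem of the odd cell. I will match this class with the stated parity after aligning the stem indexing with the chart conventions of \cref{fig: HZ AHSS over R}. This step involves only the bookkeeping of the weights $\lfloor\frac{i+1}{2}\rfloor$ and of the $(1,1)$-degree of $\rho$, which is suppressed in the charts.
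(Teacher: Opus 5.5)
Your Step 1 matches the paper's $d_1$ computation. Step 2, which you correctly identify as the crux, is not established by either of your arguments.

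\textbf{The Adams comparison fails as stated.} $\Ext_{\ca^{\r}(0)}(\m_2^{\r})\{x_k\}$ carries an infinite $h_0$-tower on every $x_k$, including the even ones, all in one stem and weight. So the Adams $E_2$-page matches the size of the Atiyah--Hirzebruch $E_1$-page, not $E_2$. To get your bound you would first need the Adams differentials $d_n(x_{2k})=h_0^nx_{2k-1}$. The paper deduces these from this very Atiyah--Hirzebruch spectral sequence. They can be imported from $\c$ by base change, since their targets are $\rho$-free, but you do not say so.

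\textbf{The fallback starts from a false premise.} A higher differential need not hit a $\rho$-divisible class. For example, $d_5\bigl(\rho^4\tau^{2m}[6]\bigr)=2^{n-1}\tau^{2m+2}[1]$ is compatible with stem, weight and filtration. On an even cell, the $\rho$-free class $\tau^{2m}[2i]$ does not survive to $E_2$, and the bottom surviving class $\rho\tau^{2m}[2i]$ maps to zero over $\c$, so base change says nothing there. Even where base change applies, a differential with $\rho$-divisible target is invisible over $\c$.

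\textbf{The missing observation.} This is what the paper's one-line appeal to ``$\rho$-linearity and degree reasons'' amounts to. $H\z^{\r}_{**}$ is concentrated in stems $\le 0$, so cell $q$ contributes only in stems $\le q$. Meanwhile $d_r$ lowers the cell by $r$ and the stem by one. Every class on $E_2$ is an $H\z^{\r}_{**}$-multiple of $1[2i-1]$ or of $\rho[2i]$, both in stem $2i-1$.
\begin{itemize}
\item The classes $1[2i-1]$ have no possible target for any $r\ge 2$.
\item The classes $\rho[2i]$ have no possible target for $r\ge 3$.
\item For $\rho[2i]$ and $r=2$, the only candidate is the integral spot of cell $2i-2$, which is $\ker(2^n)=0$ on $E_2$.
\end{itemize}
Since this is a spectral sequence of $H\z^{\r}_{**}$-modules, every class is a permanent cycle. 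No Toda brackets, Adams spectral sequence or base change are needed.

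\textbf{Step 3: the parity is not bookkeeping.} The stem of $1[2i-1]$ is $2i-1$ regardless of its weight. So no weight or $\rho$-degree convention can move the $\z/2^n[\tau^2]$ into even stems. Counting correctly, stem $s$ receives one $\f_2[\tau^2]$ from each cell $i\ge \max(1,s+1)$. It additionally receives $\z/2^n[\tau^2]$ from cell $s$ when $s>0$ is odd. In particular, your claim for $k\le 1$ fails at stem $1$, where cell $1$ contributes $\z/2^n[\tau^2]$ rather than $\f_2[\tau^2]$.

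This agrees with the displayed formula only after substituting $k=s+1$. It does agree with \cref{thm:r-HZ}, which places $H\z^{\r}_{**}/h^n$ in bidegrees $(2k-1,k)$. The printed indexing of the proposition appears to be off by one, and you should state your result in terms of the actual stem rather than promise to match it.
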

\begin{proof}
Consider the Atiyah--Hirzebruch spectral sequence (described in \cref{subsec:AHSS}):
\[
E_1 = \bigoplus_{\substack{\text{each cell } i \\ \text{in } B\mu_{2^n}}} \Sigma^{i,\lfloor \frac{i+1}{2}\rfloor} H\z^{\r}_{**} = \bigoplus_{i \geq 1} \Sigma^{i,\lfloor \frac{i+1}{2}\rfloor} H\z^{\r}_{**}  \Rightarrow H\z^{\r}_{**}B\mu_{2^n},
\] 
with nontrivial differentials
\(
d_1(x[2i]) = h^n x[2i-1], \; i\geq1.
\)

The spectral sequence collapses at the $E_2$-page. Since differentials are $\rho$-linear, there are no nontrivial higher differentials for degree reasons.
\end{proof}
\begin{figure}[H] \vspace{-0.5cm}
	\includegraphics[width=.95\linewidth]{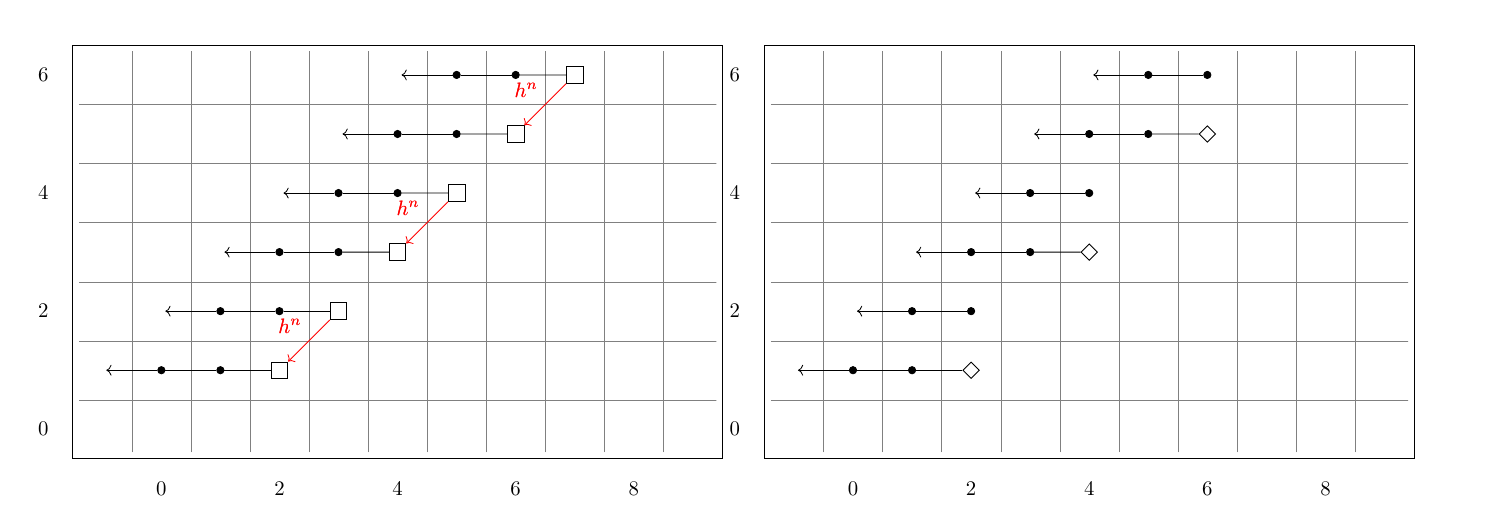}
	\vspace{-0.5cm} \caption{The \(E_1\)-page (left) and \(E_\infty\)-page (right) of the Atiyah--Hirzebruch spectral sequence converging to \(H\z_{**}^\r B\mu_{2^n}\). Each \(\square\) represents \(\z[\tau^2]\), each \(\bullet\) represents \(\f_2[\tau^2]\) , each  $\diamond$  represents $\z/2^n[\tau^2]$ and the red (slanted) arrows indicate multiplication by \(h^n\).}
	\label{fig: HZ AHSS over R}
\end{figure}

To resolve extension problems, we consider the motivic Adams spectral sequence (\cref{ASS}). Again,
\[
H^{**}(H\z^{\r}) \cong \ca^{\r} \ddash \ca^{\r}(0) \quad\text{ and }\quad H^{**}B\mu_{2^n} \;\cong\; \bigoplus_{k \geq 1} \;\Sigma^{k,\lfloor \frac{k+1}{2}\rfloor} \m^{\r}_2
\]
gives
\[
E_2
 \cong  \Ext_{\ca^{\r}(0)}^{***}(H^{**}B\mu_{2^n}) \cong \bigoplus_{k \geq 1} \, \Ext_{\ca^{\r}(0)}^{***}(\Sigma^{(k,\lfloor \tfrac{k+1}{2}\rfloor)}\m_2^\r)
 \cong\bigoplus_{k \geq 1} \, \Sigma^{(k,\lfloor \tfrac{k+1}{2}\rfloor)} \frac{\f_2[h_0,\tau^2,\rho]}{\rho h_0}\{x_k\},
\]
where $|x_k|=(k,\lfloor \tfrac{k+1}{2}\rfloor)$. Comparing to the Atiyah--Hirzebruch spectral sequence, we get

\[
d_n( x_{2k}) = h_0^{n} x_{2k-1}.
\]

Also, the differentials are $\tau^2$-linear as $\tau^2$ is an element of $H\z_{**}^\r$. The spectral sequence collapses for degree
reasons, depicted in \cref{fig:HZ over R}. This proves the following \cref{thm:r-HZ}.

\begin{figure}[H]\vspace{-0.5cm}
	\centering
	\includegraphics[scale=0.52]{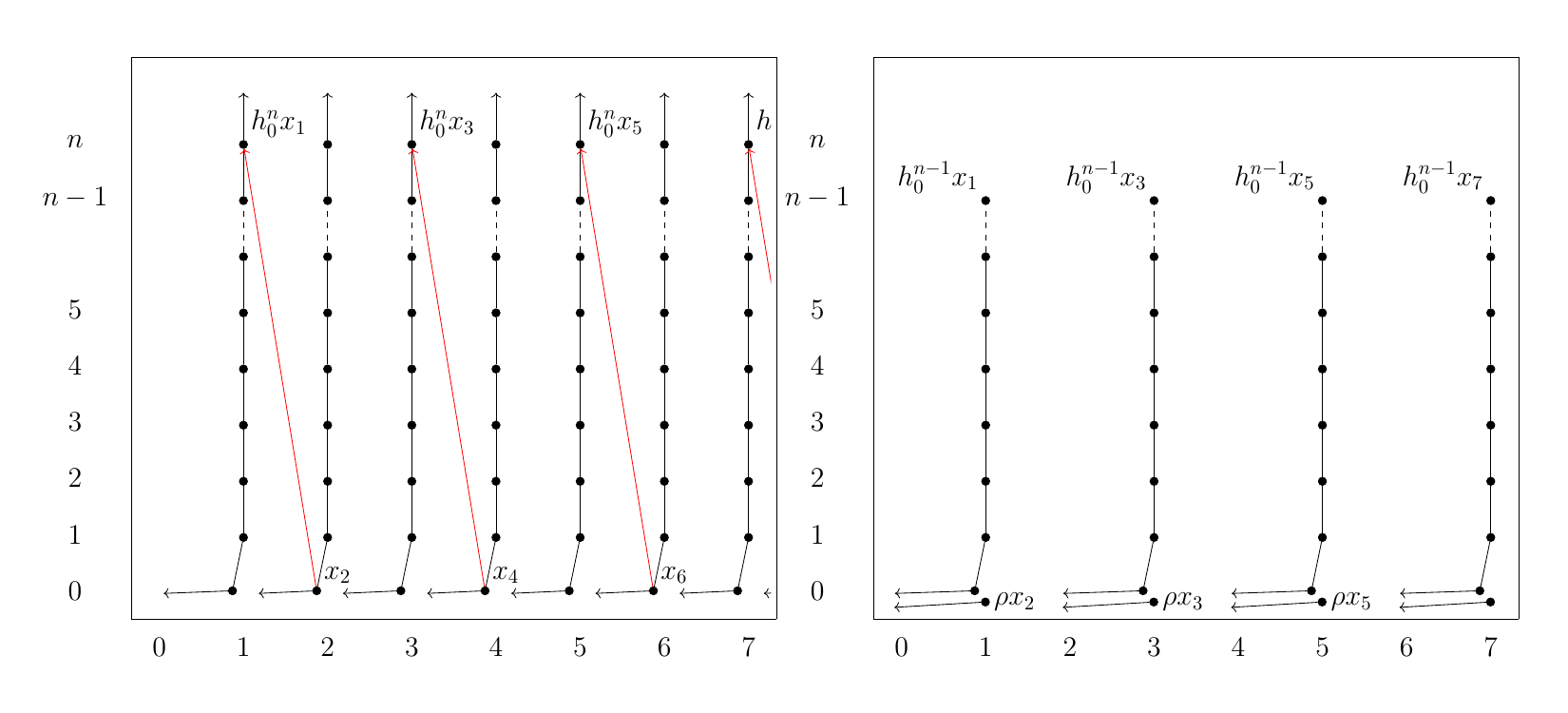}
	\vspace{-0.6cm}
	\caption{The $E_n$-page (left) and $E_{n+1} = E_{\infty}$-page (right) of the Adams spectral sequence converging to $H\z_{*}^\r BC_{2^n}$. Each $\bullet$ = $\f_2[\tau^2]$, and the $d_n$-differentials are drawn in red.}
	\label{fig:HZ over R}
\end{figure}

\begin{thm}\label{thm:r-HZ}
The \(2\)-completed integral homology of \(B\mu_{2^n}\) is given by
    \[
    H\z_{**}^\r(B\mu_{2^n});
    \cong
    \bigoplus_{k\geq 1}
    \Sigma^{2k-1,k} H\z_{**}^\r/h^n \oplus \bigoplus_{k \geq 1} \; \Sigma^{2k,k} \;\rho\; \z/2[\rho, \tau^2].
    \]
\end{thm}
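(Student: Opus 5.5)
Begin with the $E_2$-page already computed above,
\[
E_2\cong\bigoplus_{k\ge1}\frac{\f_2[h_0,\tau^2,\rho]}{(\rho h_0)}\{x_k\},\qquad |x_k|=(k,\lfloor\tfrac{k+1}{2}\rfloor),
\]
and treat the generators in pairs $(x_{2k-1},x_{2k})$, both of weight $k$. The only differentials are $d_n(x_{2k})=h_0^n x_{2k-1}$, which are forced by comparison with the Atiyah--Hirzebruch spectral sequence. They are $h_0$-, $\rho$- and $\tau^2$-linear, since $\tau^2,\rho\in H\z^\r_{**}$ and $H\z_{**}^\r(B\mu_{2^n})$ is a module over $H\z^\r_{**}$.

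\textbf{Step 1: $E_{n+1}$ for one pair.} On the $x_{2k-1}$-summand, the $\rho$-part $\rho^{j}\f_2[\tau^2]\{x_{2k-1}\}$, $j\ge1$, is untouched, because $\rho h_0=0$ and the image of $d_n$ consists of $h_0$-multiples. Only the $h_0$-tower is truncated at height $n$. On the $x_{2k}$-summand, the classes $h_0^i\tau^{2m}x_{2k}$ support differentials. The classes $\rho^j\tau^{2m}x_{2k}$ with $j\ge1$ are cycles, since $d_n(\rho x_{2k})=\rho h_0^nx_{2k-1}=0$, and they are not boundaries. So
\[
E_{n+1}\cong\bigoplus_{k\ge1}\Big(\f_2[\tau^2,\rho,h_0]/(\rho h_0,h_0^n)\{x_{2k-1}\}\;\oplus\;\rho\,\f_2[\rho,\tau^2]\{x_{2k}\}\Big).
\]
No further differentials are possible. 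The second summand is $h_0$-torsion-free of filtration $0$, and a $d_r$ out of a $\rho$-multiple lands on a $\rho$-multiple of $h_0$-positive filtration, which is zero. The first summand is generated by permanent cycles $x_{2k-1}$. This matches the ranks from the Atiyah--Hirzebruch $E_\infty$-page above.

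\textbf{Step 2: extensions.} For the summand on $x_{2k-1}$, the class $x_{2k-1}$ is the bottom cell, so it comes from the map $S^{2k-1,k}\to B\mu_{2^n}\wedge H\z$. Its $H\z^\r_{**}$-span is annihilated by $h^n$, because the $2k$-cell is attached by $h^n$. This gives a map $\Sigma^{2k-1,k}H\z^\r_{**}/h^n\to H\z^\r_{**}(B\mu_{2^n})$, and it realizes that summand of $E_\infty$: $h_0$ detects $h$, and $\rho$ acts freely while $\rho h=0$. For the summand on $x_{2k}$, the classes $\rho^j x_{2k}$ lift to elements $\rho\,y_{2k}$ with $h\rho y=0$. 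I must check that there is no hidden $h$-extension from $\rho^j\tau^{2m}x_{2k}$ into the $h_0$-tower on $x_{2k-1}$. Such an extension would have to raise filtration by at least one while preserving weight and stem. But any class $\rho z$ satisfies $h\cdot\rho z=0$ already in homotopy, by the relation $h\rho=0$ in $H\z^\r_{**}$. So no such extension exists. Hidden $\rho$-extensions from the $h_0$-tower into the $\rho$-part are excluded in the same way, since $\rho h=0$. $\tau^2$-extensions are ruled out by $\tau^2$-linearity.

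\textbf{Step 3: assembling.} Collecting the two families of summands, and reindexing the torsion-free part as $\Sigma^{2k,k}\rho\,\z/2[\rho,\tau^2]$, gives the stated splitting.

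The main obstacle is Step 2: making the vanishing of hidden extensions rigorous, in particular the claim that the lifts of $\rho^jx_{2k}$ generate a direct summand rather than gluing onto the torsion tower. I would handle it by comparing with the $\rho$-localized or Betti-realized computation and by using the relation $\rho h=0$ directly on lifts.
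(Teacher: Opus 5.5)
Your route is the paper's: $E_2$ is $\Ext_{\ca^\r(0)}$ on the cells, $d_n(x_{2k})=h_0^nx_{2k-1}$ is forced by the Atiyah--Hirzebruch count, and the differentials propagate by linearity until the spectral sequence collapses. The paper stops at the collapse and reads the answer off $E_\infty$. Your Step 2 goes further, but its key justification, and the matching step in Step 1, rest on a false claim when $j=1$.

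The class $\rho\tau^{2m}x_{2k}$ is not $\rho$ times anything in $E_r$ for $r>n$, because $x_{2k}$ supports $d_n$. Nor is the homotopy class $z_k$ detected by $\rho x_{2k}$, in bidegree $(2k-1,k-1)$, $\rho$-divisible in homotopy. The only $E_\infty$-class in bidegree $(2k,k)$ is $\rho x_{2k+1}$. Everything in bidegree $(2k-1,k-1)$ sits in Adams filtration $0$, so $\rho$ times the lift of $\rho x_{2k+1}$ is detected exactly by $\rho^2x_{2k+1}$, which is independent of $\rho x_{2k}$. So no $y_{2k}$ with $\rho y_{2k}=z_k$ exists. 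Your argument ``any class $\rho z$ satisfies $h\rho z=0$'' therefore says nothing about $z_k$. Likewise, ``a $d_r$ out of a $\rho$-multiple is $\rho$ times a positive-filtration class'' does not apply to $\rho\tau^{2m}x_{2k}$. Note also that this summand is annihilated by $h_0$; it is not $h_0$-torsion-free.

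The conclusions do hold, but the reason is weight parity, not $\rho h=0$.

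\begin{itemize}
\item The positive-filtration classes of $E_{n+1}$ are exactly $h_0^i\tau^{2m}x_{2l-1}$ with $1\le i\le n-1$, in stem $2l-1$ and weight $l-2m$.
\item There are none in even stems. So $\tau^{2m}x_{2k-1}$ and $\rho\tau^{2m}x_{2k}$ are permanent cycles, and $\rho$-linearity then handles every $\rho^j$ with $j\ge 2$.
\item There are none in bidegree $(2k-1,k-1)$, so $hz_k=0$ automatically.
\item Take $\rho^{j-1}\tau^{2m}z_k$ as the lifts of $\rho^j\tau^{2m}x_{2k}$. This gives a well-defined injective module map $\Sigma^{2k,k}\rho\,\f_2[\rho,\tau^2]\to H\z^\r_{**}(B\mu_{2^n})$.
\item For the other family, $h^n\tilde x_{2k-1}=0$ is best seen from the fact that bidegree $(2k-1,k)$ contains only $\f_2[h_0]/h_0^n\{x_{2k-1}\}$. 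The attaching map alone only places $h^n\tilde x_{2k-1}$ in the image of the $(2k-2)$-skeleton.
\end{itemize}

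The sum of the two families of module maps is an isomorphism on the Adams associated graded, hence an isomorphism, which gives the stated splitting.
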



\section{Effective algebraic K-theory}\label{kgl}

As discussed in \cref{theories}, in motivic homotopy theory, we have the notion of “connective” cover of the algebraic K-theory spectrum $KGL$, analogous to the classical complex K-theory spectrum $KU$.

We now analyze the effective algebraic K-theory of $B\mu_{2^n}$, proceeding via the Atiyah--Hirzebruch and Adams spectral sequences, and using the fact  from \cref{prop: motivic description} that
\[
H\f_2^{**}(kgl^{\textit{F}}) \;\cong\; \mathcal{A^{\textit{F}}} \ddash \mathcal{E^{\textit{F}}}(1).
\]

Again, calculations are done only over $\c$ and $\r$ as the Lefschetz principle extends to the computation over any algebraically closed field of characteristic not equal to 2 and any real closed field, respectively.

\subsection{Effective algebraic K-theory over algebraically closed fields} 

\begin{proposition}
The associated graded of \(kgl_{**}^{\c} (B\mu_{2^n})\) with respect to the Atiyah--Hirzebruch filtration is
\[gr^*_{AH} kgl_{k,w}^{\c} (B\mu_{2^n})  \cong 
\begin{cases}
	\Big(\z/2^n[\tau]\Big)^{\oplus(i+1)} & \text{if } k = 2i+1 > 0, \\
	0 & \text{otherwise.}
\end{cases}
\]
\end{proposition}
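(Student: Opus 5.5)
We will run the Atiyah--Hirzebruch spectral sequence of \cref{subsec:AHSS} for $R=kgl^{\c}$, mirroring the classical $ku$ computation. The first input is the coefficient ring. Since $H^{**}(kgl^{\c})\cong \ca^{\c}\ddash\ce^{\c}(1)$ by \cref{prop: motivic description}, the Adams spectral sequence for $kgl^{\c}_{**}$ has $E_2\cong \Ext_{\ce^{\c}(1)}(\m_2^{\c})\cong \f_2[\tau,h_0,v_1]$. It collapses for degree reasons, since all classes lie in even stems. This gives $kgl^{\c}_{**}\cong \z_2[\tau,v_1]$ with $|v_1|=(2,1)$, concentrated in even stems. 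Hence the $E_1$-page is
\[
E_1=\bigoplus_{i\geq 1}\Sigma^{i,\lfloor\frac{i+1}{2}\rfloor}\z[\tau,v_1],
\]
where the summand for the $i$-cell has, in stem $k$, a copy of $\z[\tau]$ generated by $v_1^{j}[i]$ whenever $k=i+2j$ with $j\geq 0$.

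Next we determine the differentials. By the cell structure of \cref{sec:coho}, even cells are attached to the preceding odd cells by $h^n$, and over $\c$ the element $h$ is the degree-$2$ map. This gives $d_1(x[2i])=2^n x[2i-1]$ for every $x\in kgl^{\c}_{**}$, and these differentials are injective on each $\z[\tau]$-summand in even filtration. The $d_2$-differentials would be given by $\eta$-multiplication. However, $\eta$ acts trivially on $kgl^{\c}_{**}$: in stem $1$ the coefficient ring vanishes, or equivalently $kgl$ is the cofiber of $\eta$ on $kq$. So all $d_2$ vanish. Then every odd-filtration class survives to $E_2$ as $\z/2^n[\tau]$, and no even-filtration class survives.

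After $E_2$, the surviving classes all sit in odd filtration and odd stem. Every differential $d_r$ lowers the stem by one, so it would go from an odd stem to an even stem, where $E_2$ is zero. The spectral sequence therefore collapses at $E_2$. Counting in stem $k=2i+1$: the odd cells $1,3,\dots,2i+1$ each contribute $\z/2^n[\tau]$ (namely $v_1^{i-m}[2m+1]$), which gives $(\z/2^n[\tau])^{\oplus(i+1)}$. Every even stem is zero.

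Weights need to be tracked so that each summand is a free $\f$-type $\z/2^n[\tau]$-module in the right bidegrees. Since $v_1$ has weight $1$ and each odd cell $2m+1$ has weight $m+1$, the class $v_1^{i-m}[2m+1]$ has weight $i+1$ for every $m$, with $\tau$-towers below it. The main point needing care is the vanishing of $d_2$ and the absence of $\tau$-torsion phenomena, that is, confirming that the $\c$-motivic coefficient ring has no $\eta$-multiples. We would settle this by the collapse of the Adams spectral sequence for $kgl^{\c}$, or alternatively by comparison with Betti realization to $ku$, since the $E_1$-page is $\tau$-free.
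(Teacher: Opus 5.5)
Your proposal is correct and follows essentially the same route as the paper. Both arguments compute $kgl^{\c}_{**}\cong \z_2[\tau,v_1]$ from the collapsing Adams spectral sequence, take $d_1(x[2i])=2^n x[2i-1]$ from the $h^n$ attaching maps, and conclude collapse at $E_2$ because the surviving classes all lie in odd stems. Your separate $\eta$-argument for the vanishing of $d_2$ is harmless but redundant: the parity-of-stems argument already rules out every $d_r$ with $r\geq 2$. Your explicit weight bookkeeping for the classes $v_1^{i-m}[2m+1]$ is a useful addition that the paper leaves implicit.
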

\begin{proof} Consider the Atiyah--Hirzebruch spectral sequence (\cref{subsec:AHSS}):
\[
E_1 = \bigoplus_{\substack{\text{each cell } i \\ \text{in } B\mu_{2^n}}} \Sigma^{i,\lfloor \frac{i+1}{2}\rfloor} kgl_{**}^{\c}  \Rightarrow gr^*_{AH} kgl_{**}^{\c} (B\mu_{2^n}) ,
\]
where $kgl_{n,*}^{\c} \cong \z_2[\tau] \text{ for } n=2k \geq 0$  and  0 otherwise, computed using the motivic Adams spectral sequence
with $\Ext$ generators $|h_0| = (0,1,0)$, $|v_1| = (2,1,1)$. The nontrivial $d_1$-differentials are
\[
d_1(x[2i]) = 2^n\cdot x[2i-1], \quad i \geq 1.
\]
For degree reasons, this spectral sequence collapses at $E_2$, similar to classical case.
\end{proof}

\begin{thm}\label{thm:c-kgl}
The \(E_{\infty}\)-term of the Adams spectral sequence converging to \(kgl_{**}^\c B\mu_{2^n}\) as a module over \(E_\infty\)-page of the Adams spectral sequence converging to \(kgl_{**}^\c\) is given by
\begin{enumerate}
    \item For arbitrary \(n\), through stem \(14\),
    \[
   E_\infty\Bigl(kgl_{k, *}^{\c} B\mu_{2^n}\Bigr)
    \cong
\begin{cases}
	\z/2^{n+i}[\tau]\bigoplus \Big(\z/2^{n-1}[\tau]\Big)^{\oplus i}  & \text{if } 0<k = 2i+1 \leq 14, \\
	0 & \text{otherwise.}
\end{cases}
    \]

    \item For \(n=2\),
    \[
   E_\infty\Bigl(kgl_{k, *}^{\c} B\mu_{4}\Bigr)
    \cong
\begin{cases}
	\z/2^{2+i}[\tau]\bigoplus \Big(\z/2[\tau]\Big)^{\oplus i}  & \text{if } 0<k = 2i+1 , \\
	0 & \text{otherwise.}
\end{cases}
    \]
\end{enumerate}
\end{thm}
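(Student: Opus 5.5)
We follow the same two-step strategy as in the classical $ku$ computation and transport it to $\c$ via Betti realization and $\tau$-linearity. First we identify the $E_2$-page. By \cref{prop: motivic description}, $H^{**}(kgl^\c)\cong\ca^\c\ddash\ce^\c(1)$, so change of rings gives
\[
E_2\cong \Ext_{\ce^\c(1)}^{***}\big(H^{**}B\mu_{2^n}\big).
\]
By \cref{cor: a0e1}, $H^{**}B\mu_{2^n}$ is a sum of trivial $\ce^\c(1)$-modules $\Sigma^{k,\lfloor\frac{k+1}{2}\rfloor}\m_2^\c$. Since $\Ext_{\ce^\c(1)}(\m_2^\c)\cong\f_2[\tau,h_0,v_1]$, we obtain
\[
E_2\cong\bigoplus_{k\ge1}\f_2[\tau,h_0,v_1]\{\tilde x_k\},
\]
with $|\tilde x_k|=(k,\lfloor\frac{k+1}{2}\rfloor)$. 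This is exactly the classical $E_2$-page of \cref{fig:E2 page of ku} tensored with $\f_2[\tau]$. Because $\tau$ has stem $0$ and weights are determined by stem and filtration (no $\tau$-torsion appears on the $E_2$-page), each classical class lifts uniquely up to a power of $\tau$.

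Second, we determine the differentials. Betti realization sends $\tau\mapsto1$ and induces a map of Adams spectral sequences that is the identification $E_2^{\c}[\tau^{-1}]\cong E_2^{cl}[\tau^{\pm1}]$. Differentials are $\tau$-linear, and the $E_2$-page is $\tau$-free. Hence every motivic $d_r$ is the unique $\tau$-free lift of the classical one, with weights forcing the correct power of $\tau$, which one checks is $\tau^0$ on the generators. Concretely we get $d_n(\tilde x_2)=h_0^n\tilde x_1$ and $d_n(\tilde x_{2k})=h_0^n\tilde x_{2k-1}+h_0^{n-1}v_1\tilde x_{2k-3}$, for $2\le k\le7$ in general and for all $k$ when $n=2$. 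These propagate by $h_0$-, $v_1$- and $\tau$-linearity. Alternatively, one can rederive them intrinsically by the same argument as in the classical case. The AHSS above forces $d_n(\tilde x_2)$. Then the map $\ce^\c(1)\hookrightarrow\ca^\c(1)$, together with the motivic version of \cref{prop: A1-to-E1-restriction} and the $kq$-differentials, gives $d_n(\tilde x_4)$ and $d_n(\tilde x_8)$. Finally, the Leibniz rule with $\tilde x_i\tilde x_j=\binom{i+j}{i}\tilde x_{i+j}$ handles the rest.

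Third, we read off $E_\infty$. After the $d_n$'s, the $E_{n+1}$-page is concentrated in odd stems, so it collapses. In stem $2i+1$ one counts $h_0$-towers, exactly as in \cref{fig:ku}. The tower on the leftover combination has length $n+i$, while the $i$ truncated towers on $v_1^j\tilde x_{2i+1-2j}$ have length $n-1$. Each is tensored with $\f_2[\tau]$, giving $\z/2^{n+i}[\tau]\oplus(\z/2^{n-1}[\tau])^{\oplus i}$; for $n=2$, $\z/2^{n-1}=\z/2$. As a consistency check, the order matches the AHSS count $(2^n)^{i+1}$: $2^{n+i}\cdot2^{(n-1)i}=2^{n(i+1)}$.

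The main obstacle is the bookkeeping in this last step. We must identify which linear combinations survive, so that the $h_0$-tower lengths come out as $n+i$ and $n-1$ rather than some other partition. This requires carefully solving the linear system of differentials in each odd stem, and the stem-$14$ restriction for $n>2$ is inherited directly from the classical undetermined differentials on $\tilde x_{2^k}$, $k>3$.
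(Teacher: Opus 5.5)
Your argument is essentially the paper's. You use change of rings to $\Ext_{\ce^{\c}(1)}$ of a sum of trivial modules, then Betti realization together with $\tau$-linearity to import the classical $ku$ differentials (weights force no $\tau$-powers), and the sequence collapses at $E_{n+1}$ because only odd stems survive. The paper simply reads $E_\infty$ off the classical chart, whereas you try to make the tower count explicit. That extra step contains one incorrect claim, though the final answer is right.

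Write $f_j=v_1^j\tilde x_{2i+1-2j}$ for $0\le j\le i$, which lies in stem $2i+1$ and filtration $j$. The differentials into stem $2i+1$ are $d_n(v_1^j\tilde x_{2i+2-2j})=h_0^nf_j+h_0^{n-1}f_{j+1}$ for $j<i$, and $d_n(v_1^i\tilde x_2)=h_0^nf_i$. In $E_{n+1}$ these give $h_0^{n-1}f_j=h_0^{n-1+j}f_0$ and $h_0^{n+i}f_0=0$. A parity count in filtration $n+i-1$ also shows $h_0^{n+i-1}f_0\neq 0$.

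Consequently, for $1\le j\le i$ the monomial $v_1^j\tilde x_{2i+1-2j}$ does \emph{not} carry a truncated tower of length $n-1$. Its tower has length $n+i-j$ and merges into the tower on $\tilde x_{2i+1}$. The decomposition is the reverse of what you wrote:
\begin{itemize}
\item the long tower of length $n+i$ sits on the monomial $\tilde x_{2i+1}$;
\item the $i$ short towers of length $n-1$ sit on the combinations $v_1^j\tilde x_{2i+1-2j}+h_0^j\tilde x_{2i+1}$, which have the same stem, filtration and weight.
\end{itemize}

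A clean way to see both facts at once: on stem $2i+2$, $d_n$ is the $\f_2[\tau,h_0]$-linear map $h_0^{n-1}(h_0I+N)$, where $N$ is the nilpotent shift. Its Smith normal form is $\mathrm{diag}(h_0^{n-1},\dots,h_0^{n-1},h_0^{n+i})$. This gives injectivity, so the even stems vanish, and it gives the isomorphism type $\z/2^{n+i}[\tau]\oplus(\z/2^{n-1}[\tau])^{\oplus i}$.

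Your order check $2^{n(i+1)}$ cannot distinguish between partitions, so this computation is genuinely needed. With the corrected generators, your proof goes through.
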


\begin{proof}
Recall that, as in the classical case, \cref{prop: motivic description} gives an isomorphism
 \[
H^{**}(kgl^{\c}) \cong \ca^{\c}\ddash\ce^{\c}(1),
\] 
and from \cref{cor: a0e1} we have
\[
H^{**}B\mu_{2^n} \;\cong\; \bigoplus_{k \geq 0} \;\Sigma^{k,\lfloor \frac{k+1}{2}\rfloor} \m^{\c}_2.
\]
Using these together with the change-of-rings isomorphism to simplify the \(E_2\)-page of the motivic Adams spectral sequence (\cref{ASS}), we obtain
\[
E_2^{**} \cong \bigoplus_{k \geq 1} \Sigma^k \Ext_{\ce^{\c}(1)}^{***}(\m_2)  \cong \bigoplus_{k \geq 1} \m_2^{\c}[h_0,v_1]\{\tilde{x}_k\} \Rightarrow kgl_{**}^{\c}(B\mu_{2^n}),
\]
 with $|\tilde{x}_k|=(k,\lfloor \tfrac{k+1}{2}\rfloor), \ |h_0|=(1,1,1), \ |v_1|=(3,1,1)$. Using the Betti realization functor and the fact that the differentials are $\tau$-linear, we get differentials analogous to those in the classical case.

 For arbitrary \(n\), through stem \(14\),
	\[
	\hspace{0.6cm}d_n(\tilde{x}_2)=h_0^n\tilde{x}_1,
	\qquad
	\text{ and } \qquad \text{ for } 2\leq k\leq 7,\;
	d_n(\tilde{x}_{2k})
	=
	h_0^n\tilde{x}_{2k-1}
	+h_0^{n-1}v_1\tilde{x}_{2k-3}.
	\]

Propagation by \(h_0\)- and \(v_1\)-linearity determines all remaining differentials through stem \(14\), so the spectral sequence collapses at \(E_{n+1}\).  

These differentials continue beyond stem \(14\) for \(n=2\), giving the second part of the theorem,
	\[
	d_2(\tilde{x}_2)=h_0^2\tilde{x}_1 \qquad
	\text{ and } \qquad \text{ for } k>1,\;
	d_2(\tilde{x}_{2k})
	=
	h_0^2\tilde{x}_{2k-1}
	+h_0v_1\tilde{x}_{2k-3}.
	\]

The Adams chart for \(\c\)-motivic \(kgl^{\c}_{**}(B\mu_{2^n})\) coincides with the classical Adams chart for \(ku_{*}(BC_{2^n})\), shown in \cref{fig:ku}, where each \(\bullet\) now represents \(\f_2[\tau]\) instead of \(\f_2\). 
\end{proof}

\subsection{Effective algebraic K-theory over real closed fields}

Next, we compute $kgl_{**}^{\r}B\mu_{2^n}$. 

As shown in \cite{Hil11}, we have
\begin{equation} \label{eqn: result for Ext_E1(M2) in R}
	\Ext_{\ce^{\r}(1)}^{***}(\m_2^{\r}) \cong \dfrac{\f_2[\tau^4,\rho,h_0,v_1,\tau^2 h_0]}{(\rho h_0, \rho^3 v_1, \rho \cdot [\tau^2 h_0])}, \text{\quad as a ring}.
\end{equation}

The $\r$-motivic Adams spectral sequence collapses, as there is no room for nontrivial differentials from the multiplicative generators (see \cite[Fig.~1]{Hil11}). 

\begin{figure}[H] \vspace{-1cm}
	\includegraphics[scale=0.65]{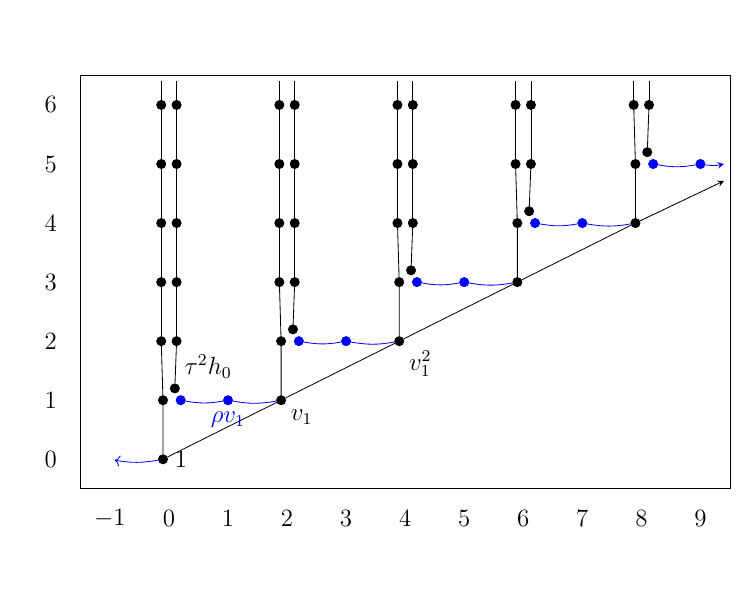}
	\vspace{-1cm} \caption{The $E_2 = E_\infty$-page of the motivic Adams spectral sequence converging to $kgl_{**}^{\r}$ ; each $\bullet$ represents $\f_2[\tau^4]$. Note that every class shown which is not $\rho^3$-divisible supports $v_1$-multiplication, although this is hidden in the chart. } 
	\label{Fig. 1} 
\end{figure}
Consider the Atiyah--Hirzebruch spectral sequence
\[
E_1 = \bigoplus_{\substack{\text{each cell } i \\ \text{in } B\mu_{2^n}}}  \Sigma^{i,\lfloor \frac{i+1}{2}\rfloor} kgl_{**}^{\r} \;\;\Rightarrow\;\; kgl_{**}^{\r}B\mu_{2^n}, 
\]
where \(x[i]\) denotes an element \(x \in kgl_{**}^{\r}\).
\begin{proposition}
The differentials are given by:
\begin{enumerate}
    \item \(d_1(x[2i]) = h^n \cdot x[2i-1], \quad i \geq 1,\)
    \item no non-trivial \(d_2\) or \(d_3\)-differentials exist.
\end{enumerate}
\end{proposition}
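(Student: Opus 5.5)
Part (1) comes from the attaching maps described in \cref{subsec:AHSS}. Each even cell $x_{2i}$ is attached to $x_{2i-1}$ by a map of degree $h^n$. So the cellular $d_1$ is multiplication by $h^n$ on $kgl^{\r}_{**}$, which gives $d_1(x[2i])=h^n x[2i-1]$. The work lies in describing $E_2$ accurately. In the Adams chart of \cref{Fig. 1}, $h$ is detected by $h_0$, and $\rho h_0=0$. Hence the kernel of $h^n$ on $kgl^{\r}_{**}$ consists of the $\rho$-divisible (and $\rho^3$-torsion, $h_0$-torsion) classes, together with whatever $2$-power torsion there is. The cokernel is $kgl^{\r}_{**}/h^n$. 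I would write $E_2$ cellwise as follows: on each odd cell $x[2i-1]$ put $kgl^{\r}_{**}/h^n$, and on each even cell $x[2i]$ put the $h^n$-annihilator of $kgl^{\r}_{**}$, which is essentially generated by the $\rho$-multiples. This mirrors the integral case, \cref{thm:r-HZ}.

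For the $d_2$ part of (2), I would use the general formula $d_2(x[i])=\eta\cdot x[i-2]$ for $i\equiv 0,1\pmod 4$. Since $kgl$ is the cofiber of $\eta$ on $kq$ (the cofiber sequence $\Sigma^{1,1}kq\xrightarrow{\eta}kq\to kgl$), $\eta$ acts by zero on $kgl^{\r}_{**}$. Concretely, the unit map sends $\eta$ to $0$ because there is no $h_1$ in $\Ext_{\ce^{\r}(1)}$ and the class in stem $(1,1)$ vanishes. Therefore every $d_2$ is zero. Here I would use the fact that the AHSS differentials are $kgl^{\r}_{**}$-linear, given by the module action of the stable attaching map, which factors through the unit $S\to kgl$.

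For $d_3$, I would use the Toda-bracket description $d_3(x[i])\in\langle x,\eta,h^n\rangle[i-3]$ or $\langle x,h^n,\eta\rangle[i-3]$. Since $kgl$ is a ring spectrum with $\eta=0$ in $kgl^{\r}_{**}$, these brackets are formed in $kgl$-modules. I would argue they are zero modulo indeterminacy by degree and filtration considerations. The target of such a $d_3$ sits on $E_2$ in a cell three below, in stem one lower, and it must have the correct weight shift: cells change weight by $\lfloor (i+1)/2\rfloor$, and a differential of cell length $3$ lands in coweight one different. I would check that $E_2$ has nothing in the required tridegree, using that $kgl^{\r}_{**}$ is concentrated in the chart of \cref{Fig. 1}, where $\rho$-towers have length at most $3$ off $v_1$-classes. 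As an alternative check, I would use $\rho$- and $\tau^4$-linearity: the possible sources are $\rho$-multiples on even cells, while targets on $x[i-3]$ must be $\rho$-divisible classes modulo $h^n$. Comparing the weight of $\eta$ (which is $1$) with what the bracket requires should rule out nonzero values.

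The main obstacle is $d_3$. The bracket $\langle x,\eta,h^n\rangle$ need not vanish just because $\eta$ maps to $0$: over $\r$, $h=2-\rho\eta$, and brackets such as $\langle \rho^k,\eta,h^n\rangle$ might produce $v_1$-type classes, analogous to $\alpha\in\langle\eta^2,\eta,2^n\rangle$ in the $ko$ case. I would first attempt a direct tridegree count on $E_2$ to show that the target groups vanish. If that fails, I would compute the bracket in $kgl$ by noting that $\eta$ is null in $kgl$-modules, so that $\langle x,\eta,h^n\rangle$ reduces to $x\cdot\langle 1,\eta,h^n\rangle_{kgl}$, and then determine the latter from the Adams chart, possibly by comparing with the $\c$-motivic case via base change, where $d_3=0$ by \cref{thm:c-kgl}.
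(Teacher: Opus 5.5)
Your treatment of $d_1$ and $d_2$ agrees with the paper. The $d_3$ part, however, has a genuine gap.

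\textbf{The tridegree count fails.} A $d_3$ from $x[4k]$ to $x[4k-3]$, or from $x[4k-2]$ to $x[4k-5]$, raises the coefficient bidegree by $(2,1)$. Take $z=\rho\in\ker(h^n)$ on the even cell $x[4k]$; it survives to $E_3$. The target group then contains $\rho v_1\in kgl^{\r}_{1,0}$. This class is not $h$-divisible, so it is nonzero in $kgl^{\r}_{**}/h^n$, and it is still present on $E_3$. So the target group is not empty: it is exactly the ``$v_1$-type class'' you were worried about.

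\textbf{Your fallback does not close the gap either.} Over $\c$ the even cells carry nothing on $E_2$, because $2^n$ acts injectively on $\z_2[\tau,v_1]$. So the vanishing of $d_3$ there is vacuous and tells you nothing about $\langle 1,\eta,h^n\rangle_{kgl}$. In fact that bracket is nonzero: it is the coset $2^{n-1}v_1$ modulo $2^n v_1$. This comes from the classical $v_1\in\langle 1,\eta,2\rangle$ in $ku_2$, via juggling and base change.

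\textbf{What is missing.} The paper's step is to evaluate the bracket, not to show the target vanishes. Both $\langle z,\eta,h^n\rangle$ and $\langle z,h^n,\eta\rangle$ contain $h^{n-1}v_1 z$. Any $z$ surviving on an even cell lies in $\ker(h^n)$. It is therefore a $\rho$-multiple, and since $\rho h=0$ it satisfies $hz=0$. Because $n\geq 2$, this gives $h^{n-1}v_1z=0$, so $d_3$ vanishes.

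The hypothesis $n\geq 2$ is essential, and your argument never uses it. For $n=1$ the same reduction gives $\langle\rho,\eta,h\rangle\ni\rho v_1\neq 0$. Any argument that does not distinguish $n\geq 2$ from $n=1$ cannot be complete.
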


\begin{proof}
\begin{enumerate}
\item Each even-dimensional cell of \(B\mu_{2^n}\) is attached to the preceding odd dimensional cell by degree \(h^n\) map. 
This gives the stated \(d_1\)-differentials.  

\item The \(d_2\)-differentials corresponds to  the Hopf map  which is zero in \(kgl\), so the \(d_2\)-differentials vanishes.  

For the \(d_3\)-differentials, recall they are determined by Toda brackets of the form
\[
\langle x[i], \eta, h^n \rangle \quad (\text{for }i \equiv 0 \pmod 4, \; i>0)
\text{ and }
\langle x[i], h^n, \eta \rangle \quad (\text{for }i \equiv 2 \pmod 4, \; i>2).
\]
Using the relation \(h^{n-1} v_1 z \in \langle z, \eta, h^n \rangle\) and  \(h^{n-1} v_1 z \in \langle z, h^n, \eta \rangle\), we would obtain
\[
d_3(z[4k]) \;=\; h^{n-1} v_1 \cdot z[4k-3] \quad \text{ and } \]
\[
d_3(z[4k-2]) \;=\; h^{n-1} v_1 \cdot z[4k-1] \quad  \forall \; k \ge 1.
\]
However, there are no $d_3$ differentials because all the target classes are $h^{n-1}$ torsion.
\end{enumerate}
\end{proof}

\begin{remark}
In this paper, our primary technique is to compute the Atiyah-–Hirzebruch differentials using the cellular filtration. However, beyond the $E_3$-page, the higher differentials are not determined directly by this approach. In particular, there may exist potential differentials, as shown in \cref{fig:E2page}.

\end{remark}

\begin{figure}[H]\vspace{-0.3cm}
	\includegraphics[scale=0.4]{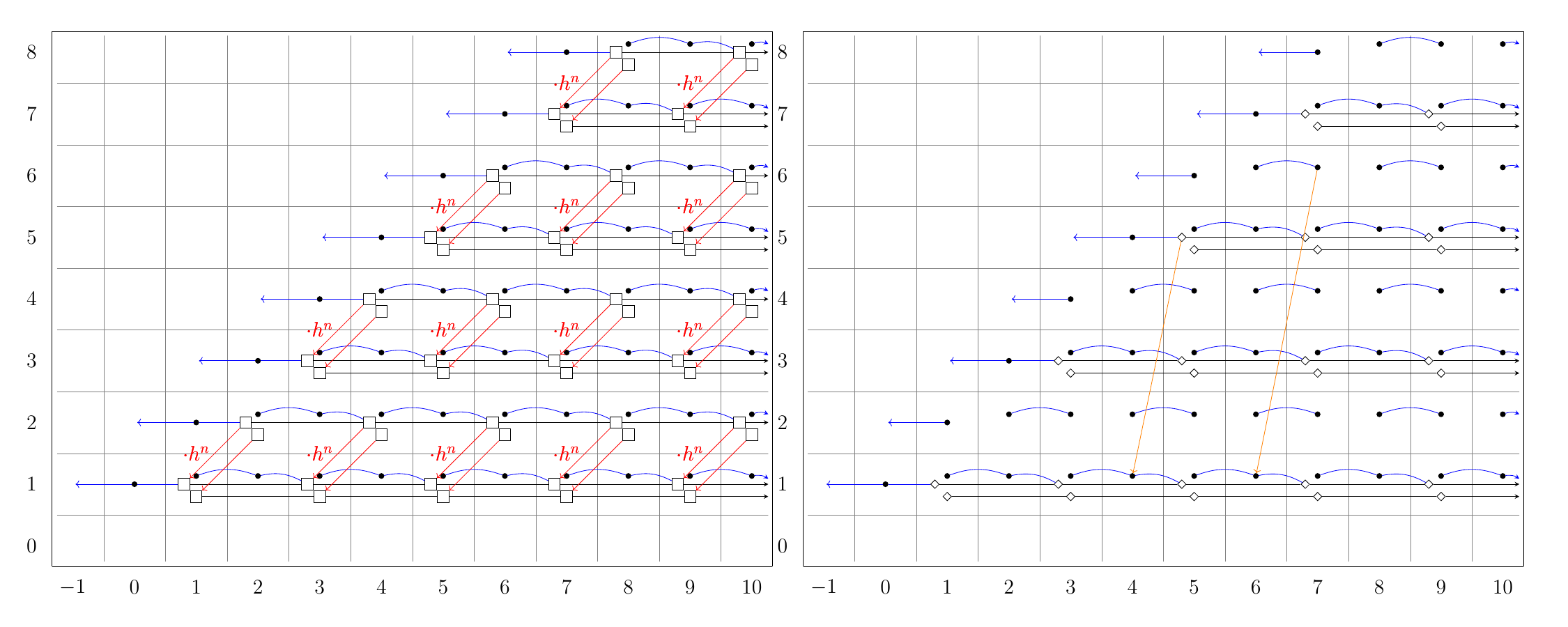}
	\vspace{-0.9cm} 
 \caption{The chart represents the \(E_1\)-page (left), and the \(E_4\)-page with some potential higher differentials (right) for the Atiyah--Hirzebruch spectral sequence converging to \(kgl_{**}^{\r}B\mu_{2^n}\). Here \(\bullet = \z_2[\tau^4]\), $\diamond = \z/2^n[\tau^4]$ and horizontal blue and black lines represent $\rho$- and $v_1$-multiplication respectively.}
 \label{fig:E2page}
\end{figure}

To determine whether these differentials actually occur (see \cref{fig:kgl over R}) and to resolve the hidden extension problems, we appeal to the Adams spectral sequence (\cref{ASS}).
\begin{thm}\label{thm:r-kgl}
As a module over the \(E_\infty\)-page of the Adams spectral
sequence converging to \(kgl_{**}^{\r}\), the \(E_\infty\)-term
of the Adams spectral sequence converging to
\(kgl_{**}^{\r} B\mu_{2^n}\) is given as follows.
\begin{enumerate}
    \item For arbitrary \(n\), through stem \(14\),
    \[
    E_\infty\Bigl(kgl_{k,*}^{\r} B\mu_{2^n}\Bigr)
    \cong
    \begin{cases}
        \begin{aligned}
            &\Bigl(\z/2^{n+i}[\tau^2]\Bigr)^{\oplus 2}
            \oplus
            \Bigl(\z/2^{n-1}[\tau^2]\Bigr)^{\oplus 2i}
            \\
            &\qquad {}\oplus
            \f_2[\tau^2]^{\oplus (i+1)}
            \oplus \f_2[\tau^2,\rho],
        \end{aligned}
        & 0 < k = 2i+1 \leq 14,
        \\[1.5ex]
        \f_2[\tau^2]^{\oplus i}
        \oplus \f_2[\tau^2,\rho],
        & 0 < k = 2i \leq 14,
        \\[1ex]
        \f_2[\tau^2,\rho],
        & \text{otherwise}.
    \end{cases}
    \]

    \item For \(n=2\),
    \[
    E_\infty\Bigl(kgl_{k,*}^{\r} B\mu_4\Bigr)
    \cong
    \begin{cases}
        \begin{aligned}
            &\Bigl(\z/2^{i+2}[\tau^2]\Bigr)^{\oplus 2}
            \oplus
            \Bigl(\z/2[\tau^2]\Bigr)^{\oplus 2i}
            \\
            &\qquad {}\oplus
            \f_2[\tau^2]^{\oplus (i+1)}
            \oplus \f_2[\tau^2,\rho],
        \end{aligned}
        & 0 < k = 2i+1,
        \\[1.5ex]
        \f_2[\tau^2]^{\oplus i}
        \oplus \f_2[\tau^2,\rho],
        & 0 < k = 2i,
        \\[1ex]
        \f_2[\tau^2,\rho],
        & \text{otherwise}.
    \end{cases}
    \]
\end{enumerate}
\end{thm}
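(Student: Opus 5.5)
The plan is to run the $\r$-motivic Adams spectral sequence for $kgl^{\r}\wedge B\mu_{2^n}$. Its differentials are fixed by $\rho$-linearity and by comparison with the $\c$-motivic computation in \cref{thm:c-kgl}; the Atiyah--Hirzebruch $E_4$-page then controls the group orders.

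\textbf{Step 1: the $E_2$-page.} By \cref{prop: motivic description} and change of rings, $E_2\cong \Ext_{\ce^{\r}(1)}(H^{**}B\mu_{2^n})$. Since $\Sq^1$ and $Q_1$ act trivially on the generators $x_k$, \cref{cor: a0e1} splits the cohomology into shifted copies of $\m_2^{\r}$. Hence
\[
E_2\cong\bigoplus_{k\ge1}\Ext_{\ce^{\r}(1)}(\m_2^{\r})\{\tilde x_k\},
\]
with each summand the ring in \eqref{eqn: result for Ext_E1(M2) in R}, shifted by $(k,\lfloor\tfrac{k+1}{2}\rfloor)$. Each summand splits into two parts:
\begin{itemize}
\item the $h_0$-towers, namely $\f_2[\tau^4,h_0,v_1]\{1,\tau^2h_0\}$, which after $\rho$-inversion behave like the $\c$-motivic page with $\tau^2$ in place of $\tau$;
\item the $\rho$-towers, namely $\rho^j$ and $\rho^jv_1,\rho^jv_1^2$ for $j\le 2$, which are $h_0$-torsion.
\end{itemize}

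\textbf{Step 2: the differentials.} I would compare with the $\c$-motivic spectral sequence through extension of scalars $\r\to\c$. This map sends $\rho\mapsto0$, fixes $\tau^2$, $h_0$ and $v_1$, and commutes with differentials. The differentials on $\tilde x_{2k}$ must lift the $d_n$ of \cref{thm:c-kgl}, so I expect
\[
d_n(\tilde x_2)=h_0^n\tilde x_1,\qquad d_n(\tilde x_{2k})=h_0^n\tilde x_{2k-1}+h_0^{n-1}v_1\tilde x_{2k-3}\quad (2\le k\le 7),
\]
with the same formulas on $\tau^2h_0\tilde x_{2k}$. Any extra $\rho$-divisible correction term would have to be $h_0$-torsion. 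Such a term is excluded because $h_0\rho=0$: it forces $h_0\cdot d(\tilde x_{2k})$ to have no $\rho$-component. The remaining differentials then follow by $h_0$-, $v_1$-, $\tau^4$- and $\rho$-linearity.

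\textbf{Step 3: the $\rho$-multiples.} Since $\rho h_0=0$, we get $\rho\, d_n(\tilde x_{2k})=0$. So the $\rho$-multiples $\rho^j\tilde x_{2k}$ survive as cycles, and nothing hits them, because the targets of the $d_n$ are all $h_0$-divisible. These surviving classes account for the $\f_2[\tau^2]$ summands in even and odd stems and for the $\f_2[\tau^2,\rho]$ ($\rho$-tower) summand in every stem. The extra $\f_2[\tau^2]^{\oplus(i+1)}$ in odd stems should come from $\rho$-torsion classes on $\tilde x_{\mathrm{odd}}$ that are not hit, since $d_n$ lands only in $h_0$-multiples.

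\textbf{Step 4: collapse and the $n=2$ case.} After $d_n$, the page should be concentrated so that any higher differential would have to go from a $\rho$-tower to an $h_0$-tower or the reverse. Linearity rules this out: $h_0$ kills the source but not the target, or $\rho$ does the reverse. For $n=2$, the infinite family of differentials from \cref{thm:c-kgl} lifts in the same way, giving part (2).

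\textbf{Bookkeeping and main obstacle.} Finally I would count orders: each odd stem should contain two copies of the $\c$-type pattern, one from $1$ and one from $\tau^2h_0$. I would check these counts against the Atiyah--Hirzebruch $E_4$-page in \cref{fig:E2page}. The step I expect to be hardest is this count in Steps 3--4. It means tracking exactly which $\rho^j v_1^{\epsilon}$-classes on $\tilde x_{2k}$ and $\tilde x_{2k-1}$ survive, given the relation $\rho^3v_1=0$. It also means confirming that the potential Atiyah--Hirzebruch differentials shown in \cref{fig:E2page} do not occur, so that the $\rho$-torsion count matches $\f_2[\tau^2]^{\oplus i}$ or $\f_2[\tau^2]^{\oplus(i+1)}$. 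I would do this stem by stem up to stem $14$, using weight (coweight) considerations to separate the classes.
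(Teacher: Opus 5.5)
Your route is the same as the paper's: change of rings, comparison with $\c$ by extension of scalars, linearity, and collapse at $E_{n+1}$. Two steps fail as written, and the second is a genuine gap.

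\textbf{Step 2.} The relation $h_0\rho=0$ cannot rule out a $\rho$-divisible correction to $d_n(\tilde x_{2k})$. Multiplying by $h_0$ kills exactly such a correction, so the fact that $h_0\cdot d_n(\tilde x_{2k})$ has no $\rho$-component tells you nothing about $d_n(\tilde x_{2k})$ itself. The working argument, which is the paper's, is a degree count. In (stem, filtration, weight), a class $\rho^a v_1^n\tau^{4m}\tilde x_l$ lying in the target tridegree $(2k-1,n,k)$ would force $-1-4m=0$. So base change to $\c$ is injective there, and this step is repairable.

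\textbf{Step 4.} The dichotomy you claim, that any higher differential goes between a $\rho$-tower and an $h_0$-tower, is false. Your Step 1 list of $\rho$-torsion stops at $\rho^jv_1^2$. In $\Ext_{\ce^{\r}(1)}(\m_2^{\r})$, however, $\rho v_1^m$ and $\rho^2v_1^m$ are nonzero for every $m$; only $\rho^3v_1=0$. Consequently:
\begin{itemize}
\item $\rho\tilde x_{2k}$, in tridegree $(2k-1,0,k-1)$, and $\rho v_1^r\tilde x_{2k-2r-1}$, in $(2k-2,r,k-1)$, are both $h_0$-torsion;
\item both survive $d_n$;
\item a $d_r$ between them with $r>n$ is compatible with $h_0$-, $\rho$- and $v_1$-linearity.
\end{itemize}

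\textbf{These differentials actually occur.} Use the Gysin cofiber sequence $B\mu_{2^n,+}\to\mathbb{P}^\infty_+\to\mathrm{Th}(\mathcal{O}(2^n))$.
\begin{itemize}
\item Since $kgl$ is oriented with multiplicative formal group law, the second map becomes cap product with $[2^n]_F(x)=\sum_{j\ge1}\pm\binom{2^n}{j}v_1^{j-1}x^j$ on the free module $kgl_{**}\{\beta_k\}$.
\item Because $\pi_{-1,-1}kgl^{\r}=\f_2\{\rho\}$, this map sends $\rho\beta_k\mapsto\rho v_1^{2^n-1}\beta_{k-2^n}\neq0$.
\item For $n=2$, the group $kgl_{7,3}(\mathbb{P}^\infty_+)$ has basis $\rho v_1^d\beta_{4-d}$, $0\le d\le4$. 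The kernel of the cap product is spanned by the terms with $d\ge1$, so it lies in Adams filtration $\ge1$.
\item Hence $\rho\beta_4$ does not lift to $B\mu_4$. Since $\rho\tilde x_8$ maps to $\rho\beta_4$, it is not a permanent cycle.
\item Its only possible target on $E_3$ is $\rho v_1^3\tilde x_1$, so $d_3(\rho\tilde x_8)=\rho v_1^3\tilde x_1$.
\end{itemize}
The same computation gives $d_3(\rho\tilde x_{2k})=\rho v_1^3\tilde x_{2k-7}$ for all $k\ge4$. Then $\rho$- and $v_1$-linearity give the corresponding differentials on $\rho v_1^j\tilde x_{2k}$ and $\rho^2v_1^j\tilde x_{2k}$.

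So $E_3\neq E_\infty$ for $n=2$ already in stems $5$--$7$. For $n=3$, the analogous $d_7(\rho\tilde x_{16})=\rho v_1^7\tilde x_1$ and $d_7(\rho^2\tilde x_{16})=\rho^2 v_1^7\tilde x_1$ already affect stems $13$ and $14$. The paper's proof also just asserts collapse at $E_{n+1}$ by pointing to the chart, so following it will not close this gap. The $\rho$-torsion part of the stated $E_\infty$ has to be recomputed with these differentials. This is precisely the check of the ``potential'' Atiyah--Hirzebruch differentials that you flagged as the main obstacle and then assumed away.
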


\begin{proof}
	Recall from \cref{prop: motivic description},
\[
H^{**}(kgl^{\r}) \cong \ca^{\r} \ddash \ce^{\r}(1).
\]
and from \cref{cor: a0e1} we have
\[
H^{**}B\mu_{2^n} \;\cong\; \bigoplus_{k \geq 1} \;\Sigma^{k,\lfloor \frac{k+1}{2}\rfloor} \m^{\r}_2.
\]
Using these and change of rings isomorphism to simplify the $E_2$-page of the motivic Adams spectral sequence we get, 
\[
E_2 \cong \bigoplus_{k\geq 1} \Ext^{***}_{\ce(1)^\r}(\m_2^\r)\{\tilde{x}_k\}.
\]
The extension of scalars from the $\r$-motivic stable homotopy category to the $\c$-motivic stable homotopy category induces a morphism of Adams spectral sequences which is compatible with all differentials. In particular, any Adams differential detected in the $\c$-motivic Adams spectral sequence must also occur in the $\r$-motivic Adams spectral sequence up to $\rho$-multiples, provided the relevant classes are present; see \cite{DI16a} for details.

In the $\c$-motivic Adams spectral sequence, the differentials are known to occur; see \cref{thm:c-kgl}. Since the extension-of-scalars map preserves the generators $h_0$, $v_1$, and the classes $\tilde{x}_j$, and because the target degrees contain no $\rho$-multiples of the appropriate weight, these differentials carry over directly to the $\r$-motivic setting.

Therefore, for arbitrary \(n\), the Adams differentials through stem \(14\) are given by
\[
\hspace{0.6cm}d_n(\tilde{x}_2)=h_0^n\tilde{x}_1,
\qquad
\text{and} \qquad
\text{for } 2\leq k\leq 7,\;
d_n(\tilde{x}_{2k})
=
h_0^n\tilde{x}_{2k-1}
+h_0^{n-1}v_1\tilde{x}_{2k-3}.
\]
These differentials continue beyond stem 14 for $n=2$,
\[
d_2(\tilde{x}_2)=h_0^2\tilde{x}_1
\qquad
\text{and} \qquad
\text{for } k>1,\;
d_2(\tilde{x}_{2k})
=
h_0^2\tilde{x}_{2k-1}
+h_0v_1\tilde{x}_{2k-3}.
\]

\begin{figure}[H] \vspace{-0.5cm}
	\includegraphics[scale=0.44]{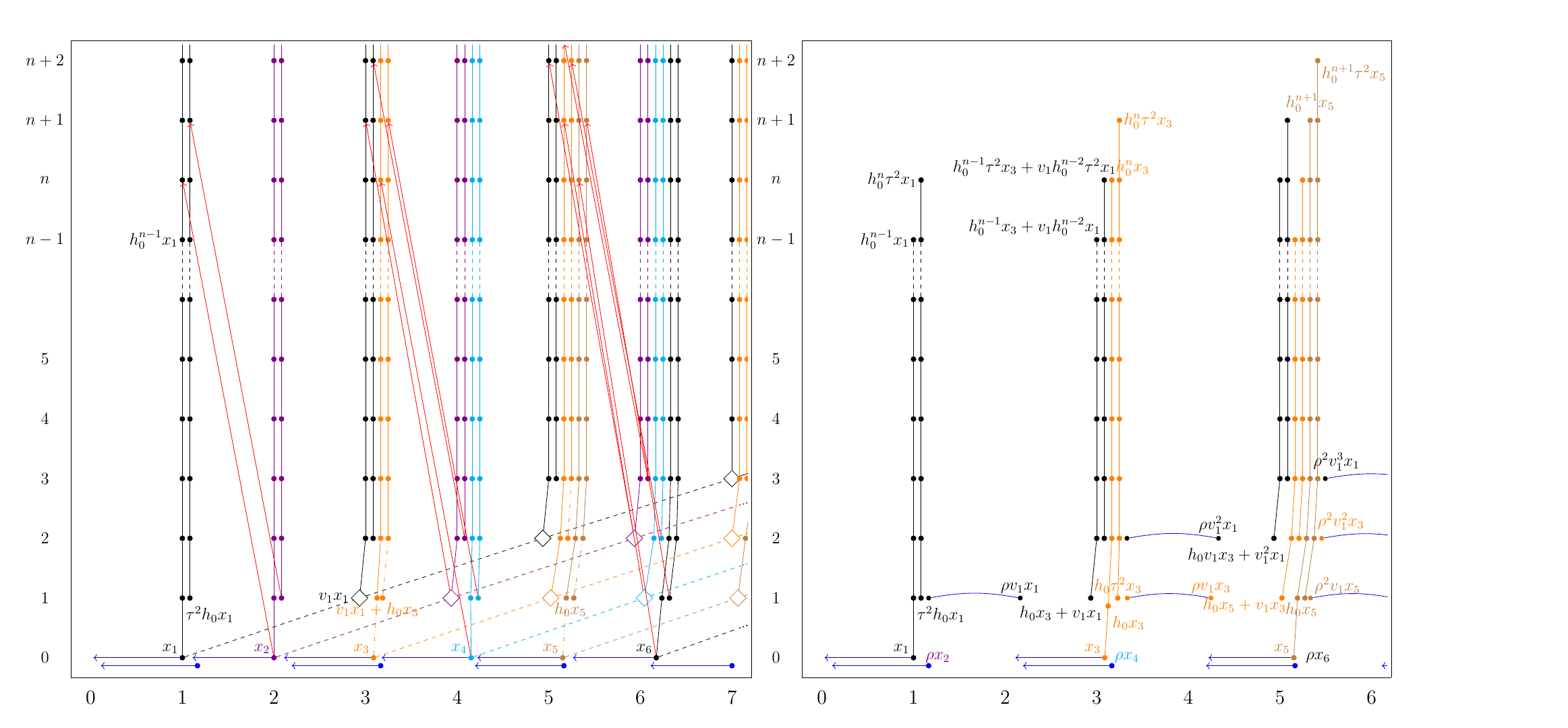}
	\vspace{-0.8cm} \caption{The $E_n$-page (left), where the $d_n$-differentials are drawn in red, and the $E_{\infty}$-page (right) of the Adams spectral sequence converging to $kgl_{k,*}^{\r} B\mu_{2^n}$. Each $\bullet$ represents $\f_2[\tau^2]$, \textbf{ $\diamond$ represents $\f_2[\tau^2,\rho]/\rho^3$ }, vertical solid lines represent multiplication by $h_0$; dash--dot--dash lines indicate that a generator $\kappa_i$ maps to $h_0 \kappa_i + v_1 \kappa_{i-2}$, dashed slanted lines 
	represent multiplication by $v_1$ and left arrowed horizontal (blue) lines represents $\rho$-tower.}
	\label{fig:kgl over R}
\end{figure}
Moreover, these propagate by \(h_0\), \(v_1\), and \(\tau^2 h_0\)-linearity. Therefore, once the differential on \(\tilde{x}_{2i}\) is understood, all corresponding differentials on its multiples follow automatically. Finally, the Adams spectral sequence collapses at the \(E_{n+1}\)-page, as illustrated in \cref{fig:kgl over R}, which rules out the possibility of any further nontrivial differentials through stem \(14\) for arbitrary \(n\) and in all stems for \(n=2\).

This completes the proof.
\end{proof}

\begin{remark}
After this calculation, we can definitely say that the higher differentials in the Atiyah--Hirzebruch spectral sequence do not occur, and the spectral sequence therefore yields the complete associated graded object. 

We emphasize that this reverse approach of deducing Atiyah--Hirzebruch differentials via the Adams spectral sequence is efficient and conceptually powerful, allowing spectral sequences to be used in-combination to ultimately leading to the desired result.
\end{remark}

\section{Very Effective Hermitian K-Theory}\label{kq}

As discussed in \cref{theories}, in motivic homotopy theory, we have the notion of “connective” cover of the Hermitian $K$-theory spectrum $KQ$, analogous to the classical real K-theory spectrum $KO$.

In this section, our goal is to compute the very effective Hermitian $K$-theory of $B\mu_{2^n}$. The computation also proceeds via the Atiyah--Hirzebruch and Adams spectral sequences, in close analogy with the $kgl$-case. 

Again, using the Lefschetz principle described in \cref{subsec:extention}, we can extend the computation over $\c$ and $\r$ to any algebraically closed field of characteristic not equal to 2 and any real closed field, respectively.

\subsection{Very Effective Hermitian K-theory over algebraically closed fields}\label{kqC}
In this section we compute $kq_{**}^{\c}(B\mu_{2^n})$, which uses $kq_{**}^{\c}$ as a main ingredient in the spectral sequences.  

\begin{remark}\label{kq_*}
As shown in \cite[Theorem~4.10]{IS11} with chart \cite[Figure~ A.1]{IS11}, the bigraded homotopy ring of very effective Hermitian $K$-theory is
\[
kq_{**}^{\c}:= \pi_{**}(kq) \;\cong\; 
\dfrac{\z[\tau, \eta, \alpha, \beta]}{(2\eta, \tau \eta^3, \eta \alpha, \alpha^2 - 4\beta)}.
\]
\end{remark}

So, let's start with the Atiyah--Hirzebruch spectral sequence, which gives

\begin{proposition}
	The associated graded groups are given by
		\[
gr^{*}_{AH} kq_{k,*}^{\c}(B\mu_{2^n}) \cong
\begin{cases}
	\z/2^n[\tau] \bigoplus \Big(\z/2^{n-1}\Big)^{\oplus 2m}[\tau], & k = 8m+1> 0, \\
	\f_2[\tau], & k = 8m+2> 0, \\
	\Big(\f_2[\tau]\Big)^{\oplus 2m+2} \bigoplus \Big(\z/2^{n}[\tau]\Big)^{\oplus 2m+1}, & k = 8m+3> 0, \\
	(\f_2)^{\oplus m+1}, & k = 8m+4, 8m+6, 8m> 0, \\
	\Big(\z/2^{n-1}\Big)^{\oplus 2m+2}[\tau] \oplus (\f_2)^{\oplus m+1}, & k = 8m+5> 0, \\
	\Big(\z/2^{n}[\tau] \oplus \f_2[\tau]\Big)^{\oplus 2m+2} \oplus (\f_2)^{\oplus m+1}, & k = 8m+7> 0, \\
\end{cases}\]
\end{proposition}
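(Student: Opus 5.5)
We run the cellular Atiyah--Hirzebruch spectral sequence of \cref{subsec:AHSS} with coefficients $kq_{**}^{\c}$ from \cref{kq_*}, mirroring the classical $ko$ computation. The $E_1$-page is
\[
E_1=\bigoplus_{i\geq 1}\Sigma^{i,\lfloor\frac{i+1}{2}\rfloor}kq^{\c}_{**}.
\]
Under Betti realization it maps to the classical $E_1$-page for $ko_*(BC_{2^n})$, and inverting $\tau$ recovers the classical spectral sequence. We therefore organize $kq^{\c}_{**}$ into two parts. The first is the $\tau$-free part, consisting of the classes $\tau^j\eta^\epsilon\alpha^a\beta^b$ that survive $\tau$-inversion. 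The second is the $\tau$-torsion part, the classes $\eta^k\beta^b$ with $k\geq 3$, which are killed by $\tau$ because $\tau\eta^3=0$.

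\textbf{Step 1: differentials on the $\tau$-free part.} On the $\tau$-free part we impose exactly the classical differentials, now made $\tau$-linear:
\[
d_1(x[2i])=h^n x[2i-1]\ (=2^n x[2i-1]\text{ over }\c),\qquad d_2(x[i])=\eta\, x[i-2]\ \ (i\equiv 0,1 \bmod 4),
\]
\[
d_3(\eta^2[4k])=2^{n-1}\alpha[4k-3].
\]
The $d_3$ uses the bracket $2^{n-1}\alpha\in\langle\eta^2,\eta,2^n\rangle$, which holds motivically by the same Adams-chart argument as classically. This reproduces every term of the classical associated graded, now tensored with $[\tau]$: the summands $\z/2^n[\tau]$, $(\z/2^{n-1})^{\oplus 2m}[\tau]$, $\f_2[\tau]$, and so on. The $\tau$-linearity and the Betti-realization comparison guarantee that the pattern is identical.

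\textbf{Step 2: the $\tau$-torsion part.} The new classes $\eta^{k}\beta^b[i]$ with $k\geq 3$ are simple $\f_2$'s, since $2\eta=0$ and they are $\tau$-torsion. They survive $d_1$ because $h^n$ acts as zero on them. For $d_2$, which is multiplication by $\eta$, we trace the $\eta$-towers cell by cell. On a $\tau$-torsion tower, $d_2\colon \eta^k[i]\mapsto\eta^{k+1}[i-2]$ for $i\equiv 0,1 \pmod 4$ pairs adjacent cells. Each such pair cancels the whole infinite $\eta$-tower except for a single bottom or top class. Counting these leftovers stem by stem should yield exactly one $\f_2$ per pair of cells below the stem. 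In the stems $k\equiv 0,4,5,6,7 \pmod 8$ this gives the new summands $(\f_2)^{\oplus m+1}$, with no $[\tau]$ attached.

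We then check that no further differentials occur. On the $\tau$-torsion part, a $d_3$ would come from the bracket $\langle\eta^k,\eta,h^n\rangle$, and this must vanish or land in classes already killed. For higher differentials we reuse the classical argument: targets must support nontrivial $\eta$-multiplication, but the odd-filtration survivors are $\eta$-torsion. Weight considerations also help, since the $\tau$-torsion classes live in weights that differ from the $\tau$-free ones.

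\textbf{Main obstacle.} The hardest part is the bookkeeping of the $\tau$-torsion $\eta$-towers under $d_2$ and $d_3$, including their interaction with the $\tau$-free classes in the same stem and weight. This must produce exactly $m+1$ copies of $\f_2$, and only in stems $8m,8m+4,8m+5,8m+6,8m+7$. To handle it, I would first draw the weight-graded chart explicitly through stem $16$ and verify the count. I would then argue $\beta$-periodicity (shifting $(8,4)$) to conclude the pattern in general.
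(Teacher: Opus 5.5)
Your outline (cellular AHSS, $d_1$ from the degree-$2^n$ attaching maps, $d_2$ from $\eta$, one $d_3$ from a Toda bracket, then collapse) is the paper's, but Step~1 is wrong as written. Over $\c$ the relation is only $\tau\eta^3=0$, so $\eta^3\neq 0$ in $kq^{\c}_{**}$, and the bracket $\langle\eta^2,\eta,2^n\rangle$ is not defined. Moreover, $\eta^2[4k]$ never reaches $E_3$, since $d_2(\eta^2[4k])=\eta^3[4k-2]\neq 0$. It also has the wrong weight: $2k+2$, versus $2k+1$ for $\alpha[4k-3]$. The correct differential, which is the one the paper uses, is $d_3(\tau\eta^2[4k])=2^{n-1}\alpha[4k-3]$, coming from $2^{n-1}\alpha\in\langle\tau\eta^2,\eta,2^n\rangle$. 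The same example shows that your split into a $\tau$-free part and a $\tau$-torsion part is not respected by the differentials. Indeed, $d_2$ carries the $\tau$-free classes $\eta^2[4j]$ and $\eta^2[4j+1]$ to the $\tau$-torsion classes $\eta^3[4j-2]$ and $\eta^3[4j-1]$. So you cannot simply import the classical differentials $\tau$-linearly. The $\eta^2$-classes surviving $d_2$ in filtrations $4j$ and $4j+1$ are $\tau\eta^2\f_2[\tau]$, not $\eta^2\f_2[\tau]$. The $\tau$-free groups nevertheless come out abstractly as stated.

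The more serious gap is Step~2: its mechanism is incorrect, and its count is only asserted. Within each $d_2$-pair, multiplication by $\eta$ maps $\{\eta^k[4j]\}_{k\ge 2}$ isomorphically onto $\{\eta^k[4j-2]\}_{k\ge 3}$. The same holds for $4j+1\to 4j-1$, where $\eta[4j-1]$ is also hit, by $1[4j+1]$. So all $\tau$-torsion in filtrations $\ge 2$ cancels. The lone leftover $\eta[4j-2]$ is $\tau$-free, and there is no top class because the towers are infinite. The only $\tau$-torsion surviving $d_2$ sits on the unpaired bottom cell, namely $\eta^k\beta^b[1]$ with $k\ge 3$. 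These classes are never hit later. Any class in filtration $\ge 2$ that survives to $E_3$ and has the right stem and weight would need a negative power of $\tau$. Likewise, $h_1^kb^jx_1$ cannot be hit in the Adams spectral sequence. The classical collapse argument ("odd-filtration survivors are $\eta$-torsion") does not handle this infinite $\eta$-tower; the weight argument is what is needed. The number of $\tau$-torsion $\f_2$'s in stem $s\ge 4$ is therefore $\lfloor (s-4)/8\rfloor+1$, not one per pair of cells. This equals $m+1$ in stems $8m+4,\dots,8m+7$, but only $m$ in stems $8m$, $8m+1$, $8m+2$, $8m+3$ for $m\ge 1$. For example, stem $8$ contains only $\eta^7[1]$, while $\eta^8[1]$, $\eta^9[1]$, $\eta^{10}[1]$ survive in stems $9$, $10$, $11$. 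So your planned check through stem $16$ cannot reproduce the displayed $\tau$-torsion summands in stems $\equiv 0,1,2,3 \pmod 8$; it would show they are misstated there. This agrees with the full summand $E_\infty(kq^{\c}_{**})\{x_1\}$ that the paper retains in \cref{thm:c-kq}.
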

\begin{proof} By \cref{subsec:AHSS}, the $E_1$-page of the Atiyah-–Hirzebruch spectral sequence is 
\[
E_1 = \bigoplus_{\substack{\text{each cell } i \\ \text{in } B\mu_{2^n}}}  \Sigma^{i,\lfloor \frac{i+1}{2}\rfloor} kq_{**}^{\c}\Rightarrow gr^{**}_{AH}kq_{**}^{\c}(B\mu_{2^n}),
\]
with nontrivial differentials computed similar to classical ones
\[
\begin{aligned}
	d_1(x[2i]) &= 2^n x[2i-1], \quad i\geq 1, \\
	d_2(x[i]) &= \eta x[i-2], \quad i = 4k, 4k+1 \geq 4, \\
	d_3(\tau \eta^2[4i]) &= 2^{n-1} \alpha[4i-3] \quad \textit{ as } 2^{n-1}\alpha \in <\tau \eta^2, \eta, 2^n>  \text{ shown in \cref{fig:kqAHSS} }.
\end{aligned} \]
The spectral sequence converges at $E_4$-page because of degree reasons and $\eta$, $\alpha$, $\beta$-linearity.
\end{proof}
\begin{figure}[H] \vspace{-0.5cm}
	\includegraphics[width=0.9\linewidth]{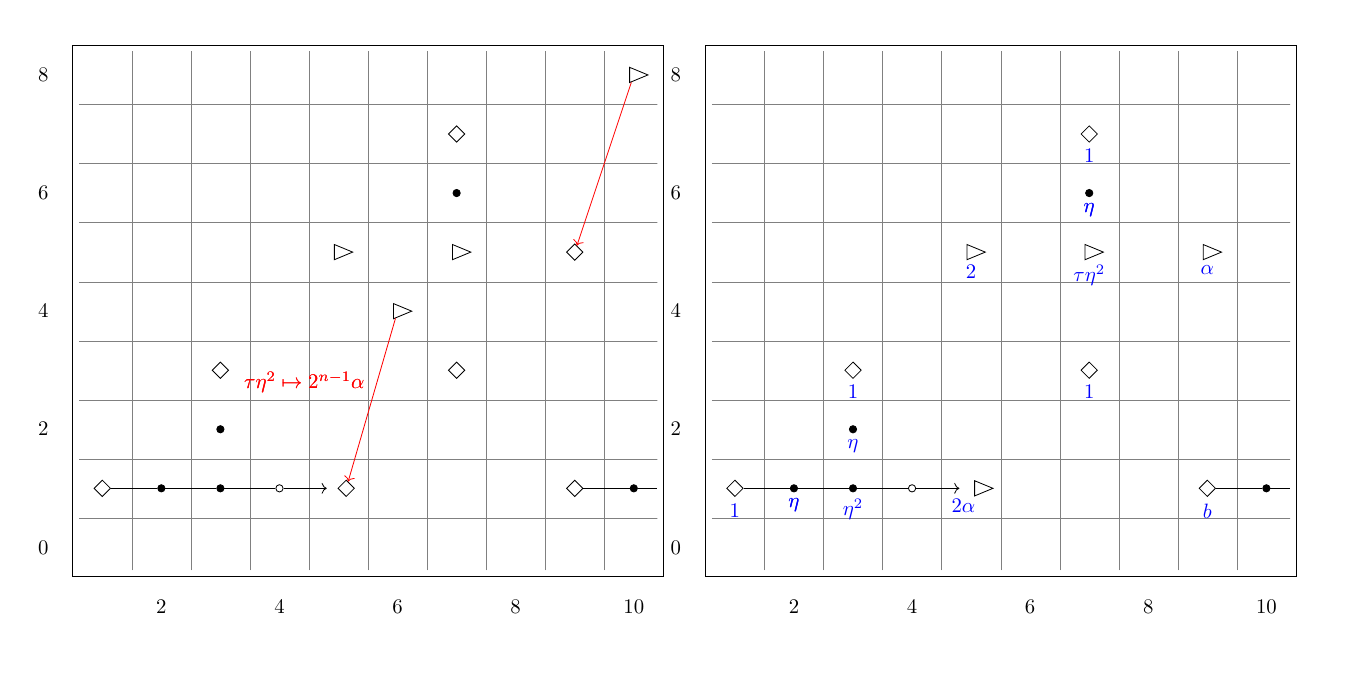}
	\vspace{-0.5cm} 
	\caption{The $ E_3$(left) and $E_\infty$(right) page of Atiyah--Hirzebruch spectral sequence converging to $gr^*_{AH} kq_{**} B\mu_{2^n}^\c$. Each $\diamond$  represents $\z/2^n[\tau]$, $\triangleright $ represents $\z/2^{n-1}[\tau]$, $\bullet$ represents $\f_2[\tau]$ and each $\circ$ represents $\f_2$. Horizontal lines represent multiplication by $\eta$. Differentials $d_3$ are shown in red.}
	\label{fig:kqAHSS}
\end{figure}

Again extension problems are resolved using the motivic Adams spectral sequence in \cref{ASS}. So, recall from \cref{prop: motivic description} that, analogously, we have
\[
H^{**}(kq^{\c}) \cong \ca^{\c} \ddash \ca^{\c}(1).
\]
Using this and a change of rings isomorphism to simplify the $E_2$-page of the motivic Adams spectral sequence we get

\[
E_2 = \Ext_{\ca^{\c}(1)}^{***}(H^{**}(B\mu_{2^n})) \Rightarrow kq_{**}^{\c}( B\mu_{2^n}).
\]
From \cref{cor: a1} we have
\[
H^{**}(B\mu_{2^n}) = \Sigma^{1,1}\m_2^{\c} \bigoplus_{k\geq 0} \Big( \Sigma^{4k+2,2k+1}\ca^{\c}(1)\ddash \ce^{\c}(1) \oplus \Sigma^{4k+3,2k+2}\ca^{\c}(1)\ddash \ce^{\c}(1)\Big).
\]
Consequently, we get
\[
E_2^{***} = \Ext_{\ca^{\c}(1)}^{***}(\Sigma \m_2^{\c}) \oplus \bigoplus_{k\geq 0} \Big( 
\Ext_{\ce^{\c}(1)}^{***}(\Sigma^{4k+2,2k+1}\m_2^{\c}) \oplus \Ext_{\ce^{\c}(1)}^{***}(\Sigma^{4k+3,2k+2}\m_2^{\c})\Big), 
\]
with
\[\Ext_{\ce^{\c}(1)}^{***}(\m_2^{\c}) \cong\m_2^{\c}[h_0, v_1]\] and
\[\Ext_{\ca^{\c}(1)}^{***}(\m_2^{\c}) \cong \dfrac{\m_2^{\c}[h_0,h_1,a,b]}{(h_0 h_1,\tau h_1^3, h_1 a, a^2 = 4 b)} \text{(by \cref{kq_*})}.\]
Now, we can proceed to analyze the differentials giving us the following theorem.

\begin{thm}\label{thm:c-kq}
The \(E_{\infty}\)-term of the Adams spectral sequence converging to \(kq_{**}^\c B\mu_{2^n}\) as a module over \(E_\infty\)-page of the Adams spectral sequence converging to \(kq_{**}^\c\) is given by

\begin{enumerate}
    \item For arbitrary \(n>1\), through stem \(14\),
    \[
	E_\infty\Bigl(kq_{**}^{\c} B\mu_{2^n}\Bigr)
	\cong
	\frac{E_\infty(kq_{**}^{\c})\{x_1\}}
		{(h_0^n,\, ah_0^{n-1})}
	\;\oplus \;
	\bigoplus_{i=1}^3
	\frac{\Biggl(
	\frac{\m_2^\c[h_0,v_1^2]}
		{h_0^{n+2i-1}}\{x_{4i-1}\}
	\oplus
	\frac{\m_2^\c[h_0,v_1^2]}{h_0^{n-1}}\{v_1x_{4i-1}\}
	\Biggr)}{
	\Bigl( a+h_0v_1^2,
	v_1^2h_0^{n-1}x_3+h_0^{n+1}x_7,\,
	v_1^2h_0^{n-1}x_7+h_0^{n+1}x_{11}
	\Bigr)}.
\]
    \item For \(n=2\),
    \[
    kq_{k,*}^{\c} B\mu_{4}\cong
\begin{cases}
	\z/2^{2m+1}[\tau]\oplus \z/2[\tau], &
	k=8m+1,\quad m\geq 0, \\[2pt]
	\z/2[\tau], &
	k=8m+2,\quad m\geq 0, \\[2pt]
	\z/2^{4m+3}[\tau]\oplus \z/2^{2m+1}[\tau], &
	k=8m+3,\quad m\geq 0, \\[2pt]
	\z/2^{2m+2}[\tau], &
	k=8m+5,\quad m\geq 0, \\[2pt]
	\z/2^{4m+5}[\tau]\oplus \z/2^{2m+1}[\tau], &
	k=8m+7,\quad m\geq 0, \\[2pt]
	0, & \text{otherwise}.
\end{cases}
\]
\end{enumerate}
\end{thm}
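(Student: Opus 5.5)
The plan is to transport the classical computation of \cref{prop:diff in ko} to the $\c$-motivic setting, in the same way as in the proofs of \cref{thm:c-HZ} and \cref{thm:c-kgl}.

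\emph{Step 1: identify the $E_2$-page.} By \cref{cor: a1} and the change-of-rings isomorphism, the $E_2$-page splits as
\[
\Ext_{\ca^\c(1)}^{***}(\m_2^\c)\{x_1\}\oplus\bigoplus_{k\ge0}\m_2^\c[h_0,v_1]\{x_{4k+2},x_{4k+3}\}.
\]
In each fixed Milnor--Witt degree this is isomorphic to the classical $E_2$-page of \cref{fig:diff ko} tensored with $\f_2[\tau]$. The one exception is the $h_1$-towers on $x_1$, which are truncated by $\tau h_1^3=0$ instead of $h_1^3=0$. The multiplicative relations of \cref{remark: Multi on E_2page} lift verbatim, because they come from the Pontryagin product on the $\ca^\c(0)$-level $E_2$-page. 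That page is $\bigoplus_k\m_2^\c[h_0]\{x_k\}$ and realizes to the classical one.

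\emph{Step 2: differentials.} Betti realization gives a map of Adams spectral sequences. After inverting $\tau$, this map is an isomorphism onto the classical spectral sequence tensored with $\f_2[\tau^{\pm1}]$; this is the identification of $\ca^\c[\tau^{-1}]$ recalled in the background. Every class in $\m_2^\c[h_0,v_1]\{x_i\}$ and every $\tau$-free class in the $x_1$-summand maps injectively under $\tau$-localization. So each classical differential (1)--(10) of \cref{prop:diff in ko} lifts to the unique $\c$-motivic differential with the same formula, namely $d_n(x_2)=h_0^nx_1$, $d_n(v_1x_6)=h_0^{n+1}x_7+h_0^{n-1}v_1^2x_3$, and so on. This requires checking that weights match: $h_0$ and $v_1$ both have weight $1$ in the motivic grading, so the classical formulas are weight-homogeneous. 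Alternatively, one can rerun the Leibniz-rule arguments of \cref{prop:diff in ko} directly over $\m_2^\c$. The remaining classes are the $\tau$-torsion classes $h_1^2x_1$ and their $b$-multiples. They cannot support or receive differentials, since any target or source would have to be $\tau$-torsion, and by degree reasons there are no such classes in the relevant tridegrees. This step is where I expect the main care is needed: I would go through the chart stem by stem through stem $14$ (all stems for $n=2$) and use $h_0$-, $a$-, $b$- and $\tau$-linearity together with the $v_1^2$-linearity argument from \cref{prop:diff in ko}.

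\emph{Step 3: read off $E_\infty$ and resolve extensions.} Once the differentials are fixed, $E_{n+1}=E_\infty$ by the same sparseness argument as classically. The quotient description in part (1) is then just the cokernel and kernel bookkeeping of $d_n$:
\begin{itemize}
\item $x_1$ is truncated by $h_0^n$ and $ah_0^{n-1}$;
\item the towers on $x_{4i-1}$ and $v_1x_{4i-1}$ are truncated at heights $n+2i-1$ and $n-1$;
\item the mixed relations come from the two-term differentials.
\end{itemize}
For $n=2$, the groups in part (2) follow by $\tau$-linearity from the Bruner--Greenlees answer quoted in \cref{prop:diff in ko}. Any hidden $2$-extension in the motivic $E_\infty$ realizes to one in the classical $E_\infty$, and conversely classical extensions lift because the realization map is injective on $\tau$-free classes. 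Hence the groups are $ko_k(BC_4)$ tensored with $[\tau]$, and the $\tau$-torsion $h_1^2$-classes contribute nothing in positive stems beyond what the AHSS count already fixes. I would cross-check orders against the Atiyah--Hirzebruch associated graded just computed. That associated graded also contains the extra $\f_2$ summands coming from $\tau$-torsion $\eta^2$-classes, and these must match the surviving $h_1^2x_1$-type classes.
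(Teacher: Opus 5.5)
Your route is the paper's route: identify $E_2$ from the $\mathcal{A}^{\mathbb{C}}(1)$-splitting, import the classical $ko$-differentials of \cref{prop:diff in ko} via Betti realization, propagate by $h_0$-, $v_1^2$- and $\tau$-linearity, collapse at $E_{n+1}$, and quote Bruner--Greenlees for $n=2$. The paper's proof says nothing beyond this.

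\textbf{Where it goes wrong: the $\tau$-torsion.} This breaks your Step~3.

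In $\operatorname{Ext}_{\mathcal{A}^{\mathbb{C}}(1)}(\mathbb{M}_2^{\mathbb{C}})$ the relation is $\tau h_1^3=0$, and this truncates nothing. We have $h_1^k\neq 0$ for all $k$, and $h_1^2x_1$ is $\tau$-free: it realizes to the classical $h_1^2x_1\neq 0$. The $\tau$-torsion classes are $h_1^kx_1$ and $b^jh_1^kx_1$ with $k\geq 3$. Correspondingly, $\eta^3$, not $\eta^2$, is the first $\tau$-torsion power of $\eta$ in $kq^{\mathbb{C}}_{**}$. So the motivic $E_2$-page is \emph{larger} than the classical one tensored with $\mathbb{F}_2[\tau]$.

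Your claim that ``any source would have to be $\tau$-torsion'' is also not valid. $\tau$-linearity only constrains the target of a $\tau$-torsion source; a $\tau$-free class can perfectly well hit a $\tau$-torsion class. The correct argument is a coweight count. Adams differentials preserve weight and lower the stem by one, so they lower coweight by one. The class $h_1^kx_1$ has coweight $0$ and stem $k+1\geq 4$, and nothing has negative coweight. All coweight-$1$ classes ($\tau h_0^jx_1$, $\tau h_1x_1$, $\tau h_1^2x_1$, $h_0^jx_2$, $h_0^jx_3$) lie in stems $\leq 3$. The same count handles the $b^j$-multiples. Separately, $h_0$ has weight $0$, not $1$; that is why the classical formulas are weight-homogeneous.

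\textbf{Consequence for part (2).} These classes are permanent and never hit, so they survive to $E_\infty$. For instance, $h_1^3x_1$ in (stem, filtration, weight) $=(4,3,4)$ detects $\eta^3$ on the bottom cell. This is a nonzero $\tau$-torsion element of $kq^{\mathbb{C}}_{4,4}(B\mu_4)$, whereas the display in part (2) gives $0$ in stem $4$.

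Your Step~3 asserts both that such classes survive and that the groups are exactly $ko_k(BC_4)$ tensored with $[\tau]$. These are incompatible, and ``contribute nothing beyond what the AHSS count already fixes'' does not reconcile them. The paper's own Atiyah--Hirzebruch proposition for $kq^{\mathbb{C}}_{**}(B\mu_{2^n})$ records extra $\mathbb{F}_2$ summands with no $\tau$-multiplication in stems $8m+4$ and $8m+6$, exactly where part (2) displays $0$. The paper's proof only transports $\tau$-free differentials through Betti realization and does not account for these classes in part (2).

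What your argument actually establishes for $n=2$ is the following. Modulo $\tau$-torsion, the groups are $ko_k(BC_4)\otimes\mathbb{Z}[\tau]$, with hidden extensions lifting as you describe. On top of that there is one $\mathbb{F}_2$ summand with trivial $\tau$-action for each surviving $b^jh_1^kx_1$, $k\geq 3$; these sit alone in their stem and coweight, so no extensions involve them. State and prove part (2) in that corrected form. Part (1) is unaffected, since $E_\infty(kq^{\mathbb{C}}_{**})\{x_1\}$ already contains the full $h_1$-towers.
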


\begin{proof}
Continuing from the previous discussion, the \(E_2\)-page of the Adams spectral sequence converging to \(kq_{**}^{\c}(B\mu_{2^n})\) is shown in \cref{fig:kq ASS over C}.
The nontrivial differentials are as follows. For arbitrary \(n\), through stem \(14\), they are
\[
\begin{aligned}
d_n(x_2)&=h_0^n x_1, &
d_n(v_1x_2)&=h_0^{n+1}x_3,\\
d_n(v_1^2x_2)&=a h_0^{n-1}x_1, &
d_n(x_6)&=h_0^{n-1}v_1x_3,\\
d_n(v_1x_6)&=h_0^{n+1}x_7+h_0^{n-1}v_1^2x_3, &
d_n(v_1^2x_6)&=h_0^{n-1}v_1^3x_3,\\
d_n(x_{10})&=h_0^{n-1}v_1x_7, &
d_n(v_1x_{10})&=h_0^{n+1}x_{11}
+h_0^{n-1}v_1^2x_7,\\
d_n(v_1^2x_{10})&=h_0^{n-1}v_1^3x_7, &
d_n(x_{14})&=h_0^{n-1}v_1x_{11}.
\end{aligned}
\]

In the case \(n=2\), this pattern continues beyond stem \(14\), and for \(k>3\) we have
\[
\begin{aligned}
d_2(x_{4k+2})
&=h_0v_1x_{4k-1},\\
d_2(v_1^2x_{4k+2})
&=h_0v_1^3x_{4k-1},\\
d_2(v_1x_{4k+2})
&=h_0^3x_{4k+3}+h_0v_1^2x_{4k-1}.
\end{aligned}
\]
These differentials are \(\tau\) and \(v_1^2\)-linear, and the spectral sequence collapses at \(E_{n+1}\).
The differentials follow from \cref{prop:diff in ko} via Betti realization and propagate by \(h_0\)- and \(v_1^2\)-linearity. Degree considerations rule out any further nontrivial differentials.
This completes the computation of the \(E_{n+1}\)-term of the Adams spectral sequence. The spectral sequence collapses at \(E_{n+1}\) for degree reasons. The resulting \(E_{\infty}\)-term is illustrated in \cref{fig:kq ASS over C}.
\end{proof}

\begin{figure} [H]\vspace{-0.5cm}
	\centering    \includegraphics[width=1.0\linewidth, height=0.32\textheight]{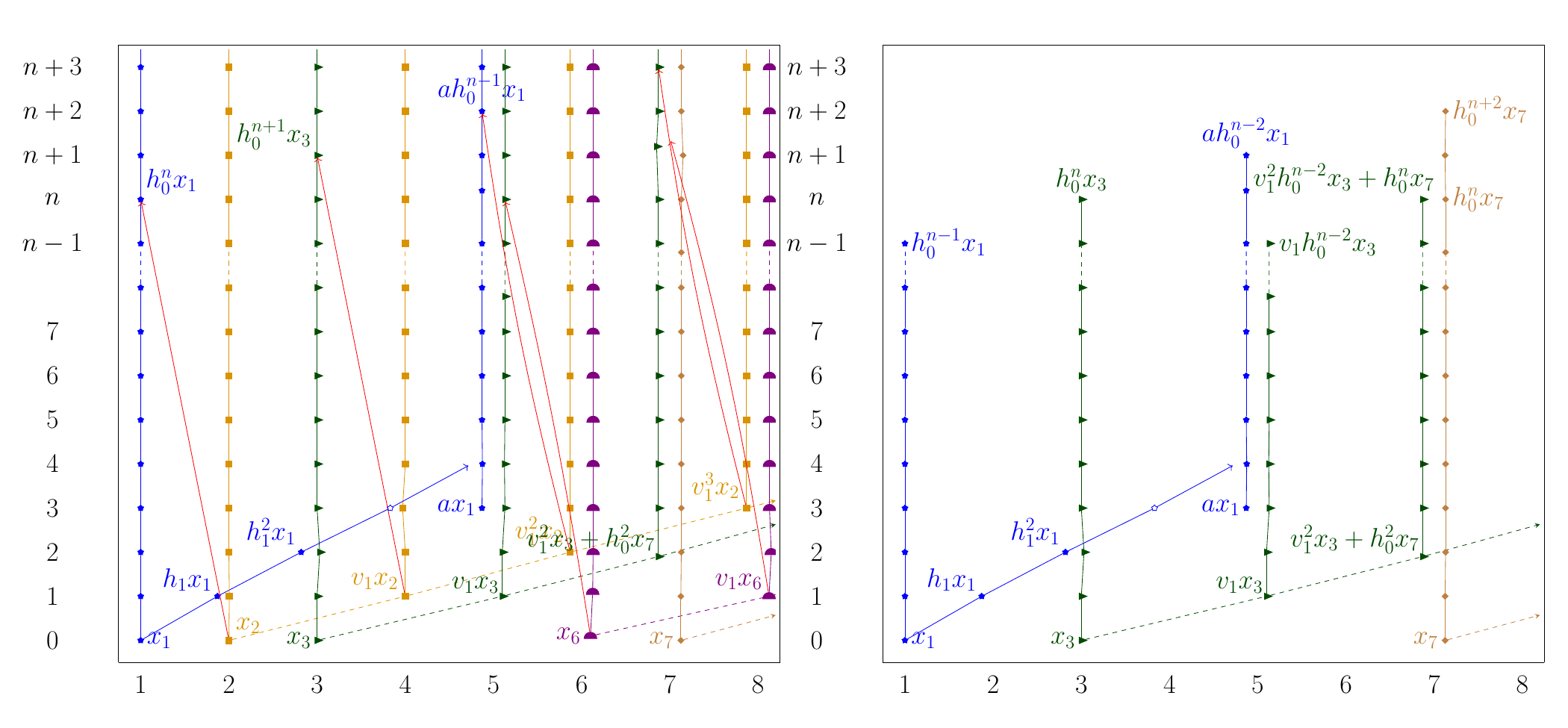}
	\vspace{-1cm} \caption{The $E_n$-page (left), where the $d_n$-differentials are in red, 
		and the $E_{n+1} = E_{\infty}$-page (right) of the Adams spectral sequence converging to 
		$kq_{**} B\mu_{2^n}$. Each bullet represents $\f_2[\tau]$; $\circ$ represents $\f_2$, vertical lines 
		represent multiplication by $h_0$ and slanted lines 
		represent multiplication by $v_1$.}
	\label{fig:kq ASS over C}
\end{figure}

\subsection{Very Effective Hermitian K-theory over real closed fields}\label{kqR}
Finally, we compute $kq_{**}^\r B\mu_{2^n}$. We proceed as before, but the presence of $\rho$ makes the computation more involved. Instead of presenting the full calculation at once, we use the periodicity in the cell structure of $B\mu_{2^n}$ and organize the analysis by a filtration through its cells and their $stem - weight$ pairs.

To begin the Atiyah--Hirzebruch spectral sequence, we first need to understand its input, $kq_{**}^\r$.
\cite{Hil11} uses the $\rho$-Bockstein spectral sequence which has the form
\[
E_1 = \Ext_{\ca(1)^\c}^{***}(\m_2^\c)[\rho] \Rightarrow \Ext_{\ca(1)^\r}^{***}(\m_2^\r).
\]
to compute the $E_2$-term of the motivic Adams spectral sequence converging to $kq_{**}^\r$, \[
E_2 = \Ext_{\ca(1)^\r}^{***}(\m_2^\r) \Rightarrow kq_{**}^\r.
\]
The Adams spectral sequence collapses, giving a complete computation of $kq_{**}^\r$, organized by $coweight = stem - weight$, as shown in \cref{fig:kqR}. See \cite{GHIR19} for detailed calculations.

\begin{figure}[h] \vspace{-0.5cm}
	\includegraphics[scale=0.43]{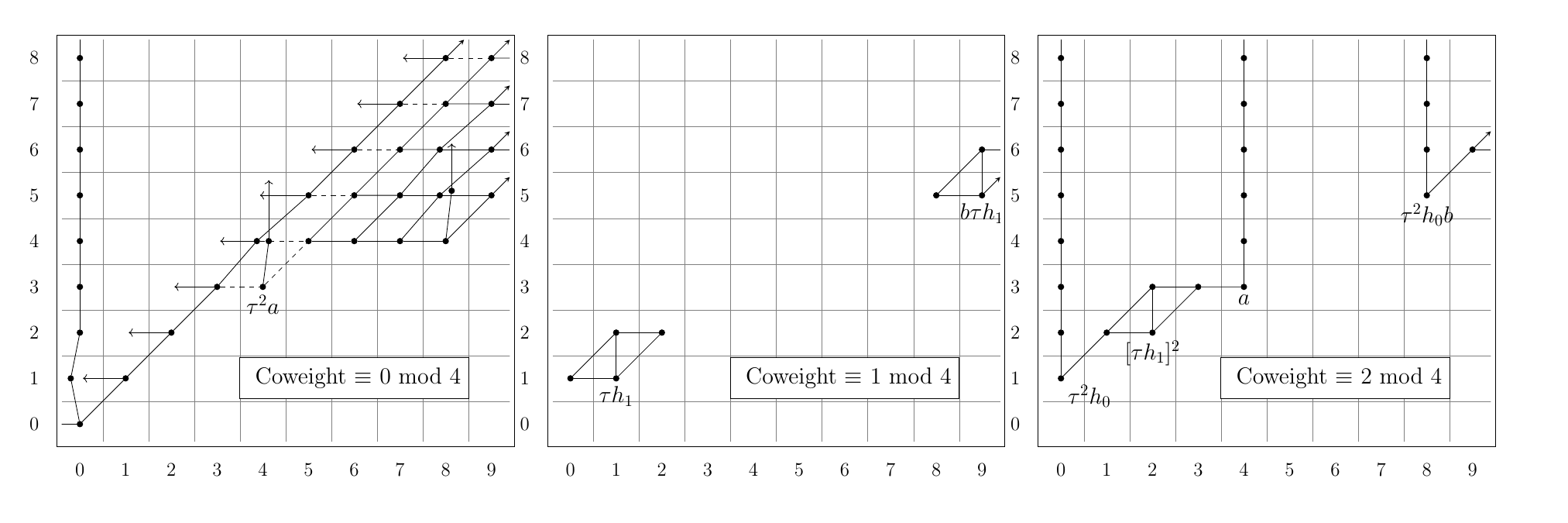}
	\vspace{-0.7cm} \caption{$\Ext_{\ca^\r(1)}^{***}(\m_2^\r)$ organized by coweights modulo $4$ converging to $kq_{**}^\r$. Each $\bullet$ represents $\f_2[\tau^4]$. We have redrawn the charts from \cite[Section 12.1]{GHIR19}, which can be extracted from \cite{Hil11}.}\label{fig:kqR}
\end{figure}

We now analyze the Atiyah--Hirzebruch spectral sequence
\[E_1 = \bigoplus_{\substack{\text{each cell } i \\ \text{in } B\mu_{2^n}}} \Sigma^{i,\lfloor \frac{i+1}{2}\rfloor} kq_{**}^\r \Rightarrow kq_{**}^\r B\mu_{2^n}.\]

Since $B\mu_{2^n}$ has one cell in each positive degree, the $E_1$-page contains a suspended copy of $kq_{**}^\r$ in every positive filtration. Note that we use the $\Ext$-charts for simplicity of the calculation. The generators, relations, hidden extensions and massey products in the $\Ext$-charts are all described in \cite[Section~6]{GHIR19}.
The $E_1$-page is organized by coweight modulo 4.
Also, each even cell attached to the preceding odd cell by $h^n$ map, so the resulting $d_1$-differentials are induced by $h^n$ attaching map.

Thus, the $E_2$-page of the Atiyah–-Hirzebruch spectral sequence is given by
\[
E_2^{i,**} \;\cong\;
\begin{cases}
	kq_{**}/h^n & \text{if } i = 2k+1 > 0, \\[6pt]
	\ker\!\Big(h^n : kq_{**} \to kq_{**}\Big) & \text{if } i = 2k > 0.
\end{cases}
\]

From the chart perspective, for odd filtrations ($i = 2k+1$) the $E_2$-page is obtained by
removing classes divisible by $h^n$(target of $h^n$-multiplication), while for even filtrations($i = 2k+2$) it is obtained by removing classes that support nontrivial multiplication by $h^n$(source of $h^n$-multiplication). This produces the charts shown in \cref{fig:kqAH_E2_f0} for even filtrations and \cref{fig:kqAH_E2_f1} for odd filtrations.
\begin{figure}[H] \vspace{-0.5cm}
\includegraphics[scale=0.42]{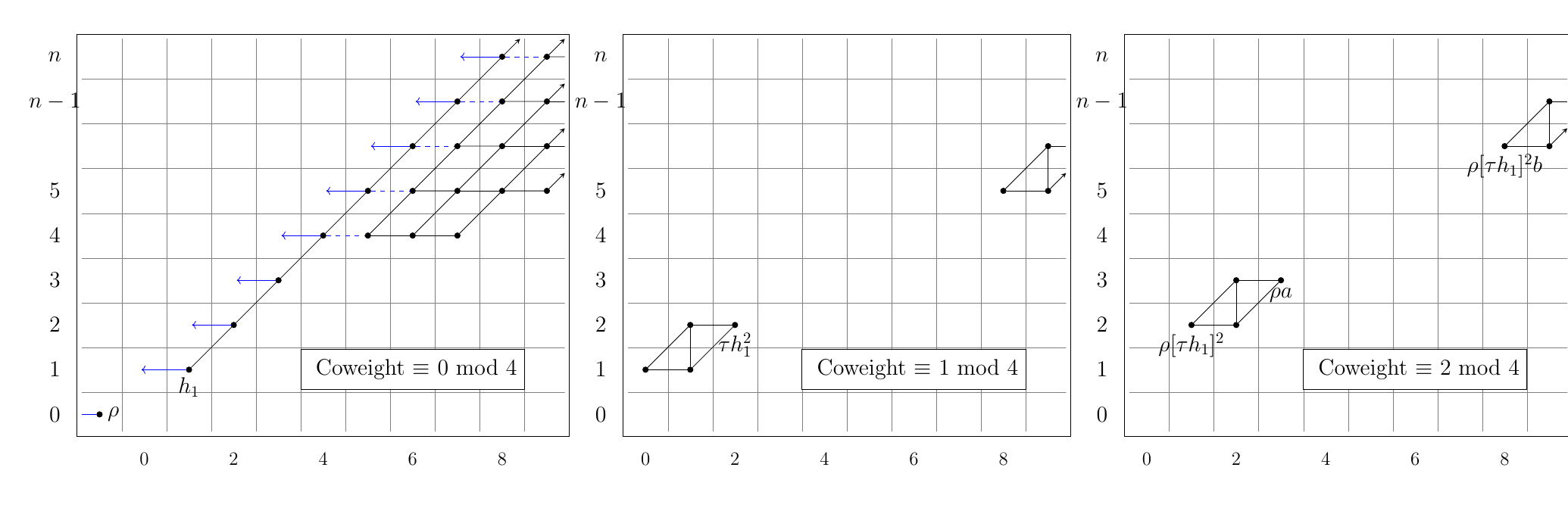}
	\vspace{-0.7cm} \caption{Charts representing the even filtrations of the $E_2$-term of the Atiyah--Hirzebruch spectral sequence converging to $kq_{**}^\r B\mu_{2^n}$ organized by coweights. Each $\bullet$ represents $\f_2[\tau^4]$.}\label{fig:kqAH_E2_f0}
\end{figure}

\begin{figure}[H] \vspace{-1cm}
	\includegraphics[scale=0.43]{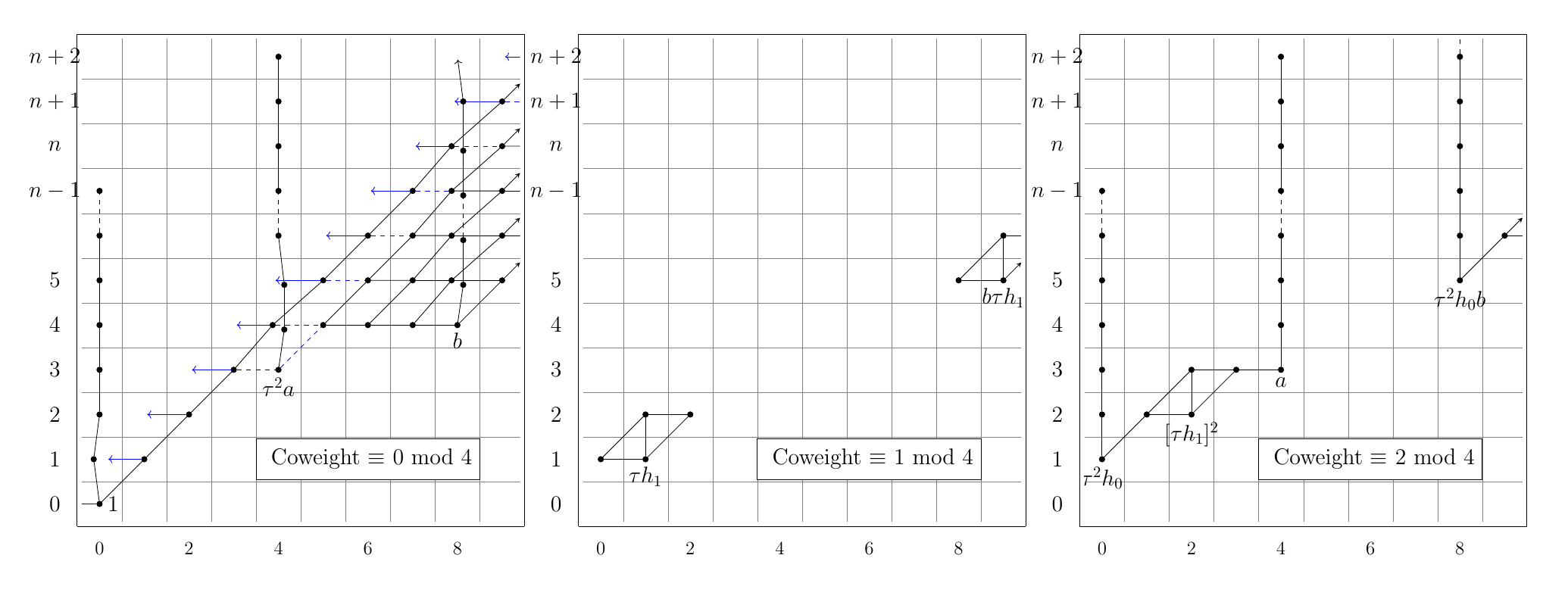}
	\vspace{-0.7cm} \caption{Charts representing the odd filtrations of the $E_2$-term of the Atiyah--Hirzebruch spectral sequence converging to $kq_{**}^\r B\mu_{2^n}$ organized by coweights. Each $\bullet$ represents $\f_2[\tau^4]$.}\label{fig:kqAH_E2_f1}
\end{figure}

Next, the $d_2$-differentials are induced by the Hopf map, attaching the $4i$ and $4i+1$ cells to the $4i-2$ and $4i-1$ cells, respectively, yielding
\[
d_2(x[4i]) = \eta \cdot x[4i-2], \quad i>0, \qquad
d_2(x[4i+1]) = \eta \cdot x[4i-1], \quad i>0.
\]
Recall, $d_2$ increases the stem $t-s$ by $1$, and increases the coweight by $1$  giving that it preserve the weight and pairity. The $E_3$-page is obtained by taking the kernel and cokernel of multiplication by $\eta$ on each row of $E_2$. Explicitly,
\[
E_3^{i,**} \;\cong\;
\begin{cases}
	\ker(\eta : kq_{**}/h^n \to kq_{**}/h^n) & \text{if } i = 4k+1 > 4, \\[2mm]
	\ker(h^n) / \eta & \text{if } i = 4k+2 > 0, \\[1mm]
	(kq_{**}/h^n)/\eta & \text{if } i = 4k+3, \\[1mm]
	\ker(\eta : \ker(h^n) \to \ker(h^n)) & \text{if } i = 4k+4 > 0.
\end{cases}
\]

From the chart perspective, each row of the $E_3$-page can be visualized as the corresponding row of $E_2$, with classes that are $\eta$-divisible or support nontrivial multiplication by $\eta$ removed, depending on the filtration. This produces the charts shown in \cref{fig:kqAH_E3_f0}, \cref{fig:kqAH_E3_f1}, \cref{fig:kqAH_E3_f2}, and \cref{fig:kqAH_E3_f3} for each congruence class of the filtration modulo $4$.

\begin{figure}[H] \vspace{-0.2cm}
	\includegraphics[scale=0.39]{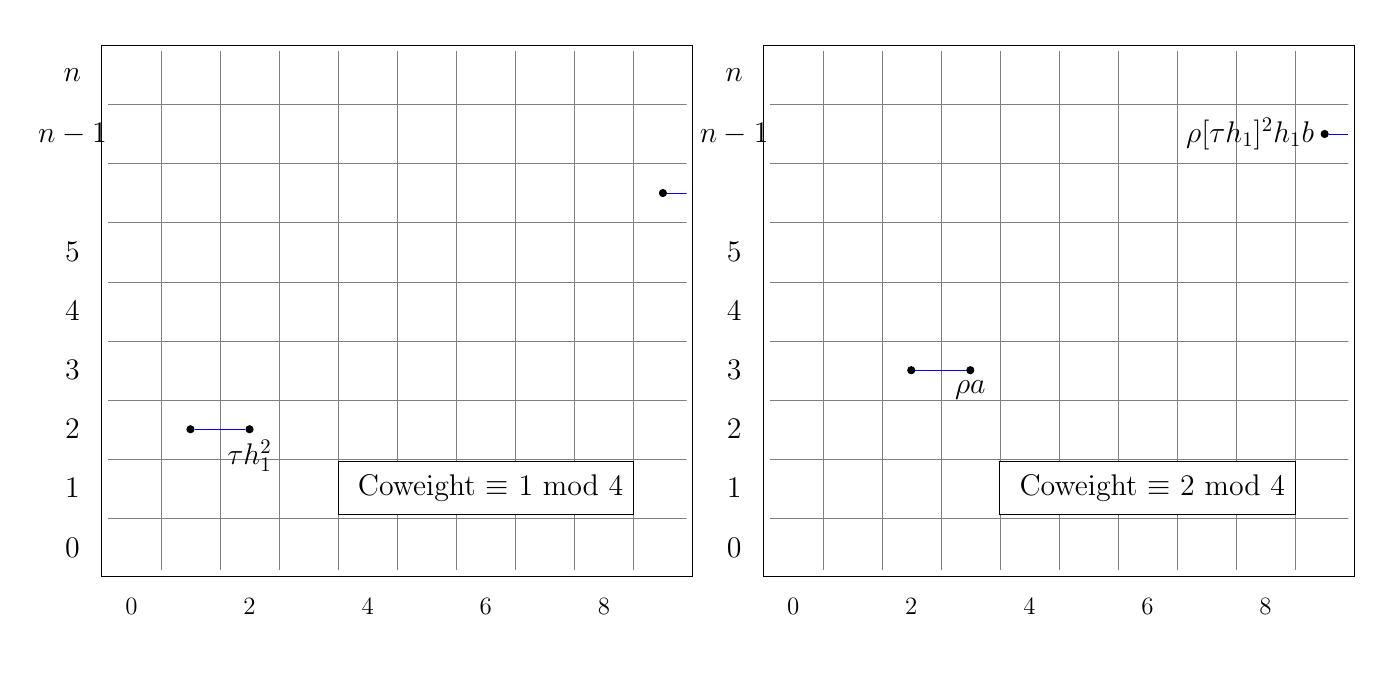}
	\vspace{-0.7cm} \caption{Charts represents filtrations congruent to $0$ mod $4$ of the $E_3$-page of the Atiyah--Hirzebruch spectral sequence organized by coweights since the other coweights vanish. Each $\bullet$ represents $\f_2[\tau^4]$.}\label{fig:kqAH_E3_f0}
\end{figure}
\begin{figure}[H] \vspace{-0.4cm}
	\includegraphics[scale=0.38]{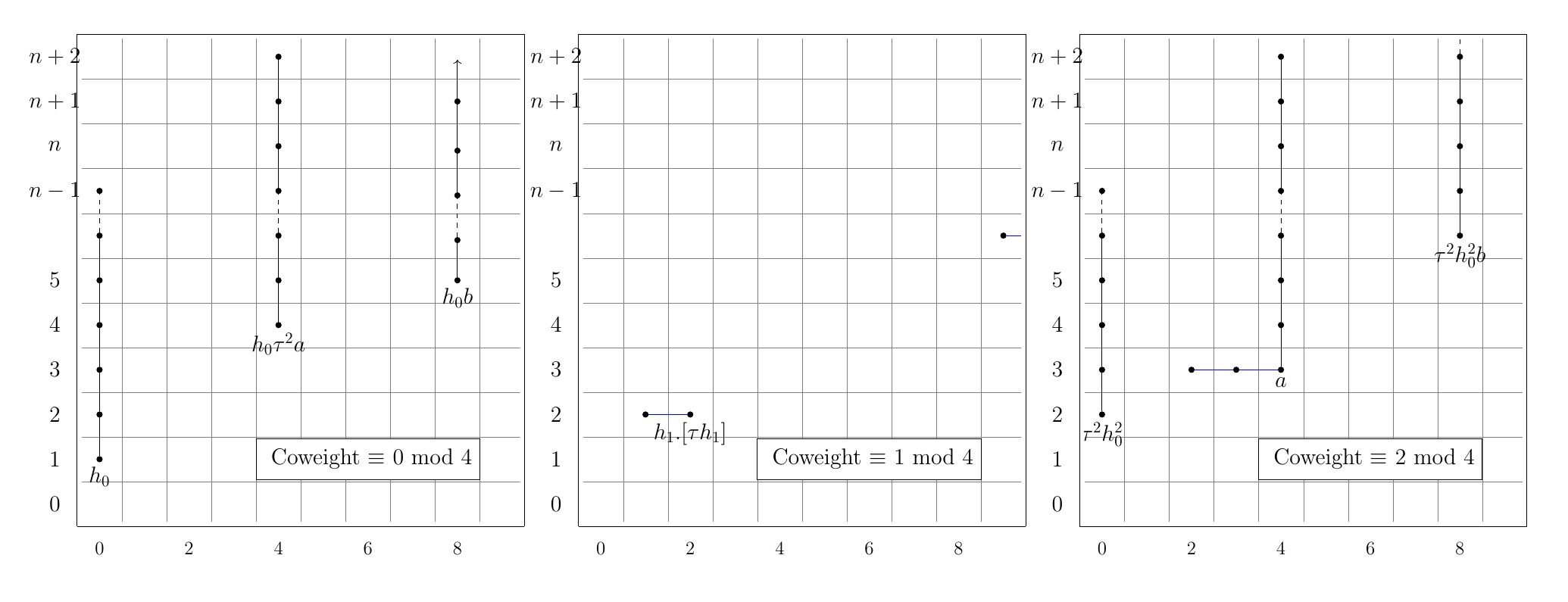}
	\vspace{-0.7cm} \caption{Charts represents filtrations congruent to $1$ mod $4$ and greater than 1 organized by coweights. Each $\bullet$ represents $\f_2[\tau^4]$.}\label{fig:kqAH_E3_f1}
	\vspace{-0.1cm}
\end{figure}
\begin{figure}[H] \vspace{-0.2cm}
	\includegraphics[scale=0.38]{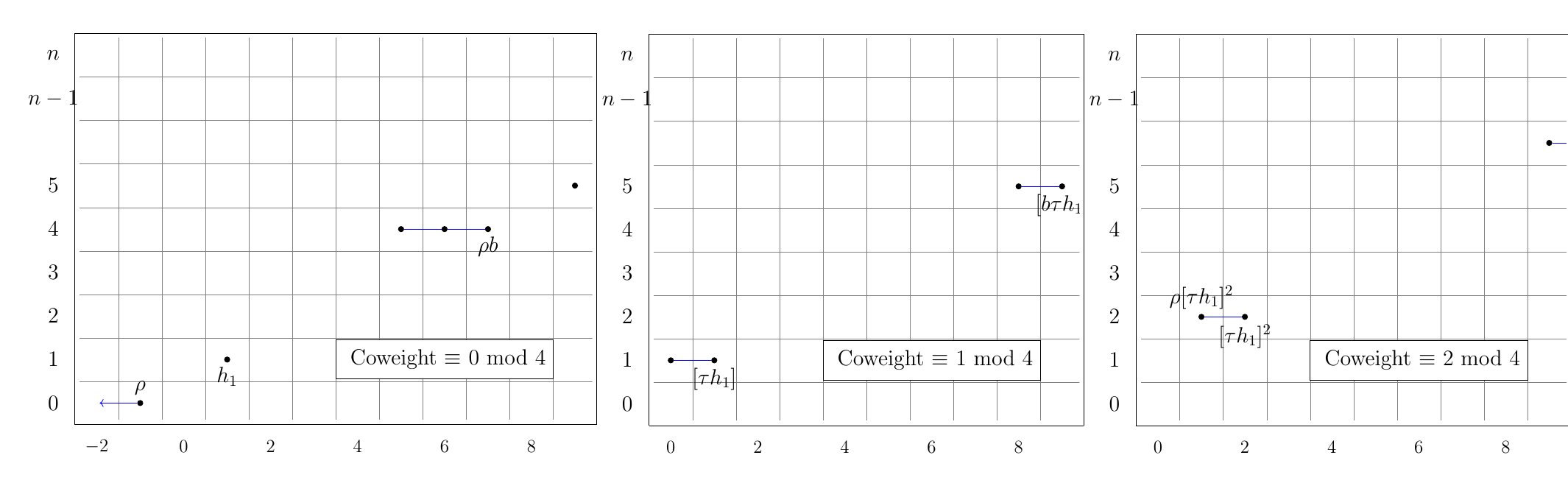}
	\vspace{-0.7cm} \caption{Charts represents filtrations congruent to $2$ mod $4$ of the $E_3$-page organized by coweights. Each $\bullet$ represents $\f_2[\tau^4]$.}\label{fig:kqAH_E3_f2}
	\vspace{-0.1cm}
\end{figure}
\begin{figure}[H] \vspace{-0.2cm}
	\includegraphics[scale=0.38]{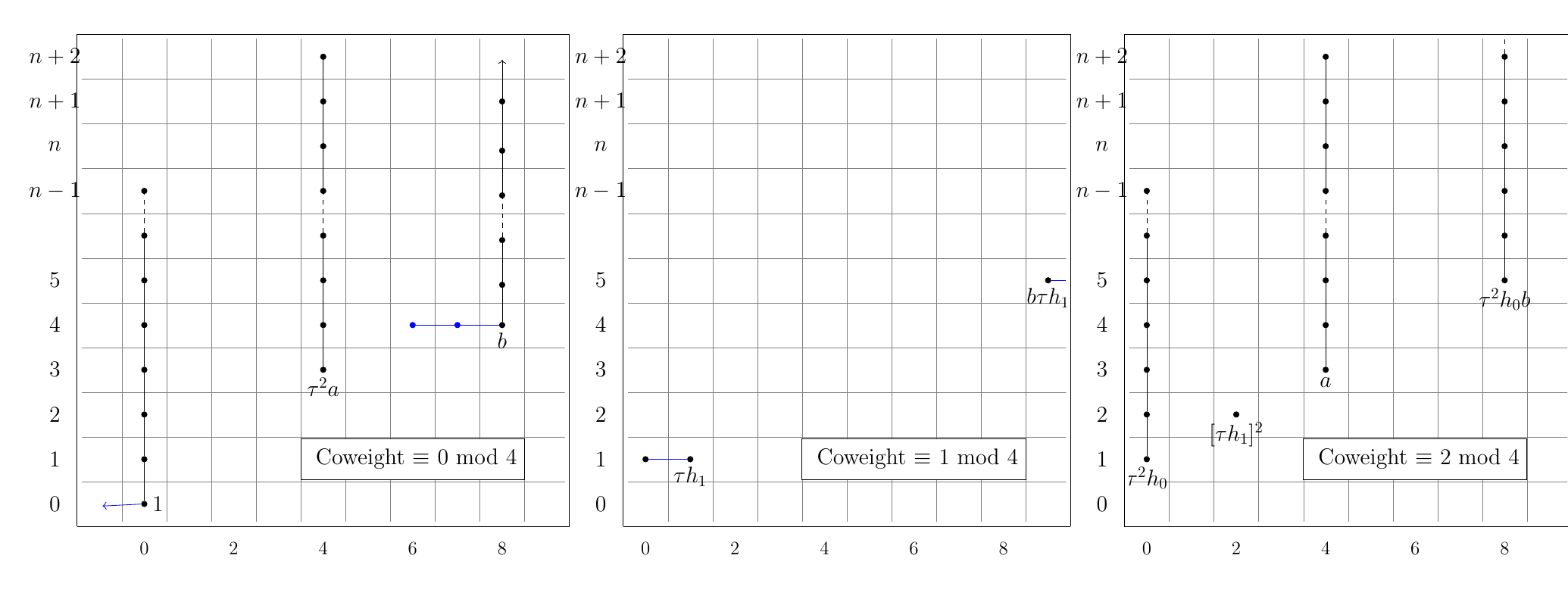}
	\vspace{-0.7cm} \caption{Charts represents filtrations congruent to $3$ mod $4$ of the $E_3$-page organized by coweights. Each $\bullet$ represents $\f_2[\tau^4]$.}\label{fig:kqAH_E3_f3}
\end{figure}

\begin{proposition}
Up to Bott periodicity, the only nontrivial $d_3$-differential is
\[
d_3(h_1 \cdot \tau h_1) = h_0^{n-1} a.
\]
\end{proposition}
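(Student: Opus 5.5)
The plan follows the recipe of \cref{subsec:AHSS}: every $d_3$ is a Toda bracket coming from a three-cell subquotient of $B\mu_{2^n}$. Concretely,
\[
d_3(x[4k]) \in \langle x,\eta,h^n\rangle[4k-3]
\quad\text{and}\quad
d_3(x[4k+2]) \in \langle x,h^n,\eta\rangle[4k-1].
\]
So I will go through the $E_3$-page (\cref{fig:kqAH_E3_f0}--\cref{fig:kqAH_E3_f3}) one congruence class of filtration mod $4$ at a time. For each class I will list the sources $x$ that can carry a $d_3$, together with the possible targets in filtration three lower, one stem lower and the same weight. Then I will compute the relevant brackets in $kq^\r_{**}$, using the Massey products recorded in \cite[Section~6]{GHIR19}.

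\textbf{Sources in filtration $4k$.} A class $x[4k]$ survives to $E_3$ only if $h^n x=0$ and $\eta x=0$, which forces $x$ to be $\eta$-torsion. In the $\r$-motivic chart the natural candidate is $h_1\cdot\tau h_1$. This is the analogue of $\tau\eta^2$ over $\c$, and of $\eta^2$ classically, where we had $d_3(\eta^2[4k])=2^{n-1}\alpha[4k-3]$. Juggling gives
\[
\langle h_1\cdot\tau h_1,\eta,h^n\rangle \supseteq \tau h_1\langle h_1,h_1,h^n\rangle ,
\]
or alternatively one can use the relation $h^{n-1}a\in\langle \tau h_1^2,h_1,h_0^n\rangle$, read off from the Massey product $a\in\langle h_0,h_1,\tau h_1^2\rangle$ together with $h_0$-linearity. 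Either way the bracket is detected by $h_0^{n-1}a$, which gives $d_3(h_1\cdot\tau h_1)=h_0^{n-1}a$. As a consistency check, extension of scalars to $\c$ must send this differential to the one in the proof of the $\c$-motivic proposition, since $\tau h_1\cdot h_1\mapsto\tau h_1^2$. Bott periodicity ($b$, or $\beta$ on homotopy) then propagates the differential, because brackets are $b$-linear.

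\textbf{Other candidate sources.} I will rule these out class by class:
\begin{itemize}
\item Sources in filtration $4k+2$ must satisfy $h^n x=0$. Their brackets $\langle x,h^n,\eta\rangle$ land in $\rho$-torsion-free or $h_0$-divisible classes, and these have already been removed from the odd rows of $E_3$, where we took the quotient by $\eta$. So such brackets vanish modulo indeterminacy.
\item $\rho$-multiples and $\rho$-towers cannot support or receive a $d_3$, by $\rho$-linearity: $\rho$ is a permanent cycle in the coefficients, and the corresponding $\c$-motivic classes map to zero or to targets of weight with no room.
\item Pure coweight and weight bookkeeping kills most of the remaining pairs, because $d_3$ preserves weight while changing the stem by $-1$ and the coweight by $-1$.
\end{itemize}

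\textbf{Main obstacle.} The hard step is showing that the bracket for $h_1\cdot\tau h_1$ really is $h_0^{n-1}a$ and is not swallowed by indeterminacy. The indeterminacy is $x\cdot kq^\r_{**}+kq^\r_{**}\cdot h^n$. The second term can contain $h_0^{n}a$-type classes but not $h_0^{n-1}a$, and $h_1\cdot\tau h_1\cdot(\text{stem }2)$ vanishes in the relevant degree. I will verify this directly from the charts in \cite{GHIR19}. I will also confirm nontriviality by mapping to $\c$, where $\tau\eta^2$ hits $2^{n-1}\alpha$; this shows the image there is nonzero, so the $\r$-motivic bracket is nonzero as well.
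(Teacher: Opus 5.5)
Your argument for the differential itself is the paper's: the paper's proof rests on $h_0^{n-1}a\in\langle \tau h_1\cdot h_1,h_1,h_0^n\rangle$. Deriving this from $a\in\langle h_0,h_1,\tau h_1^2\rangle$ by juggling in $h_0^{n-1}$ is sound, and confirming nontriviality by base change to $\c$ actually adds something the paper omits. Drop your first route, though. The bracket $\langle h_1,h_1,h^n\rangle$ is not defined, since $h_1^2\neq 0$; equivalently, $\eta^2\neq 0$ in $kq^\r_{**}$.

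The gap is in the word ``only''. Your claim that $\rho$-multiples and $\rho$-towers can neither support nor receive a $d_3$ ``by $\rho$-linearity'' is not a valid principle. Linearity gives $d_3(\rho x)=\rho\, d_3(x)$. That says nothing about which classes are hit, and it is nonzero whenever $\rho\, d_3(x)\neq 0$.

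\begin{itemize}
\item The class $\rho\cdot h_1\tau h_1$ does survive to $E_3$ in filtration $4k$. Linearity gives $d_3=\rho h_0^{n-1}a$, which vanishes only because $\rho h_0=0$. That is the fact the paper uses.
\item More seriously, the row $E_3^{4k+2}=\ker(h^n)/\eta$ contains classes such as $\rho^j[4k+2]$. These are $d_1$-cycles only because $\rho h=0$, whereas $1[4k+2]$ supports a $d_1$, so linearity gives no control over their $d_3$. That $d_3$ is the new bracket $\langle y\rho,h^n,\eta\rangle\supseteq y\langle\rho,h^n,\eta\rangle$. The paper disposes of this row using $\langle\rho,h_0^n,h_1\rangle=0$ together with degree considerations.
\item This is exactly where $n\geq 2$ enters. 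For $n=1$ the corresponding Massey product $\langle\rho,h_0,h_1\rangle$ contains $\tau h_1\neq 0$, reflecting $d_1(\tau)=\rho h_0$ in the $\rho$-Bockstein spectral sequence. Your $\rho$-linearity argument never uses $n\geq 2$.
\item Your substitute reasoning for filtration $4k+2$ does not close the gap either. The target rows $(kq^\r_{**}/h^n)/\eta$ still contain $\rho$-torsion-free classes, for instance the unit $\rho$-tower. So it is false that such targets ``have already been removed''.
\end{itemize}

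Two smaller omissions remain. You leave implicit why sources in odd filtration carry no $d_3$; the paper's reason is that no primary attaching maps link filtration $1\bmod 4$ to $2\bmod 4$, or $3\bmod 4$ to $0\bmod 4$. And ``bookkeeping kills most of the remaining pairs'' has to be replaced by an actual check, against the $E_3$-charts, that the remaining classes in filtration $0\bmod 4$ have no nontrivial targets.
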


\begin{proof}
Recall that the $d_3$-differentials decrease the filtration by 3, increase the stem $t-s$ by $2$, and increase the coweight by $1$. 
They arise from Toda brackets of the form:
\[
d_3(x[i]) \;\in\;
\begin{cases}
\langle x[i], \eta, h\rangle, & i \equiv 0 \pmod 4,\;\; i>0, \\[1mm]
\langle x[i], h, \eta\rangle, & i \equiv 2 \pmod 4,\;\; i>2.
\end{cases}
\]

In particular,
\[
h_0^{n-1} a \in \langle \tau h_1 \cdot h_1, h_1, h_0^n \rangle,
\]
which gives a $d_3$-differential connecting \cref{fig:kqAH_E3_f0} to \cref{fig:kqAH_E3_f1}, from the class $h_1 \cdot \tau h_1$ (the dot $(2,2)$ in the left chart of \cref{fig:kqAH_E3_f0}, corresponding to coweight $1 \bmod 4$) to $h_0^{n-1} a$ (the dot $(4,n+2)$ in the right chart of \cref{fig:kqAH_E3_f1}, corresponding to coweight $2 \bmod 4$).

On the other hand,
\[
d_3(\rho \tau h_1 \cdot h_1) = \rho h_0^{n-1} a = 0,
\]
so the class $\rho \tau h_1 \cdot h_1$, along with the remaining classes in \cref{fig:kqAH_E3_f0} and \cref{fig:kqAH_E3_f1}, survives to the next page, since there are no nontrivial targets for the differential.

There are no possible $d_3$-differentials from \cref{fig:kqAH_E3_f2} to \cref{fig:kqAH_E3_f3}, because $\langle \rho, h_0^n, h_1 \rangle = 0$ and degree considerations. Similarly, there are no $d_3$-differentials originating in filtrations $1$ or $3 \bmod 4$, as there are no primary attaching maps from filtration $1 \bmod 4$ to $2 \bmod 4$, or from $3 \bmod 4$ to $0 \bmod 4$.
\end{proof}

\begin{remark}
The row in filtration $i=1$ remains unchanged from the $E_2$-page, since no $d_2$-differentials enter or leave that row.
\end{remark}

By assembling these charts after applying the $d_3$-differentials, we obtain the complete $E_4$-page shown in \cref{fig:kq_AHSS_BC_{2^n}_E4all}.  

Finally, note that the $d_4$-differentials decrease the vertical degree by four, the horizontal degree by one and increases the coweight by two, which rules out possibility of higher differentials for degree reasons. Hence, the Atiyah--Hirzebruch spectral sequence for $kq_{**}^\r B\mu_{2^n}$ collapses at the $E_4$-page. This determines the associated graded object $\mathrm{gr}^*_{AH} kq_{**}^\r B\mu_{2^n}$.

\vspace{-0.5cm}
\begin{figure}[H]
	\includegraphics[width=15.25cm]{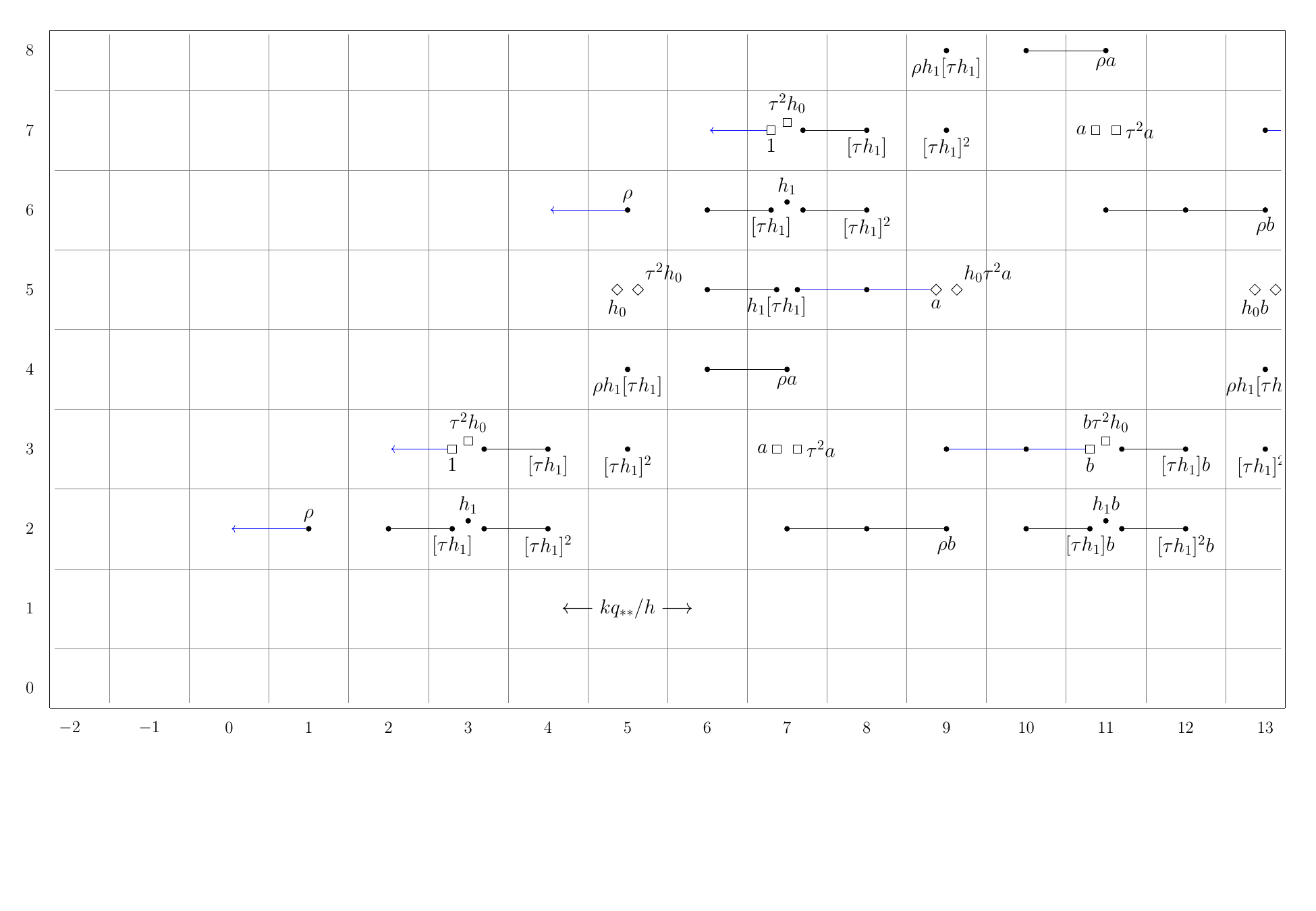}
	\vspace{-2.5cm} \caption{Chart representing the whole $E_4$-page of the Atiyah--Hirzebruch spectral sequence converging to $gr^{*}_{AH} kq_{**}^\r (B\mu_{2^n})$. Here $\square = \z/2^n[\tau^4],$\\ $\diamond = \z/2^{n-1}[\tau^4] $ and $\bullet = \f_2[\tau^4]$.}
	\label{fig:kq_AHSS_BC_{2^n}_E4all}
\end{figure}

\vspace{0.5cm}
To solve extension problems we use the motivic Adams spectral sequence, which has $E_2$-page: 
\[
E_2 = \Ext_{\ca^{\r}(1)}^{***}(H^{**}(B\mu_{2^n})) \Rightarrow kq_{**}^{\r}( B\mu_{2^n}).
\]
From \cref{cor: a1} we have
\[
H^{**}(B\mu_{2^n}) = \Sigma^{1,1}\m_2^{\r} \bigoplus_{k\geq 0} \Big( \Sigma^{4k+2,2k+1}\ca^{\r}(1)\ddash \ce^{\r}(1) \oplus \Sigma^{4k+3,2k+2}\ca^{\r}(1)\ddash \ce^{\r}(1)\Big).
\]
Consequently,
\[
E_2^{***} = \Sigma^{1,1} \Ext_{\ca^\r (1)}^{***}(\m_2^\r ) \oplus \bigoplus_{k\geq 0} \Big( 
\Sigma^{4k+2,2k+1}\Ext_{\ce^\r (1)}^{***}(\m_2^\r ) \oplus \Sigma^{4k+3,2k+2}\Ext_{\ce^\r (1)}^{***}(\m_2^\r )\Big).
\]

These summands are drawn in the charts below, see \cref{fig:E_n kq}.

\begin{thm}\label{thm:r-kq}
	The \(E_\infty\)-page of the motivic Adams spectral sequence converging to \(kq_{**}^\r (B\mu_{2^n})\) as a module over the \(E_\infty(kq^{\r}_{**}\))  is given by the charts in \cref{fig:kq ASS over R}.
\end{thm}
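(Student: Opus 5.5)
The plan is to run the motivic Adams spectral sequence whose $E_2$-page was just described, one summand at a time, and to import differentials from the $\c$-motivic computation of \cref{thm:c-kq}. The $E_2$-page splits as
\[
\Sigma^{1,1}\Ext_{\ca^\r(1)}^{***}(\m_2^\r)\{x_1\} \oplus \bigoplus_{k\ge0}\Ext_{\ce^\r(1)}^{***}(\m_2^\r)\{x_{4k+2},x_{4k+3}\}.
\]
For the first summand I would use Hill's $\rho$-Bockstein description, as redrawn from \cite{GHIR19} in \cref{fig:kqR}. For the remaining summands I would use the ring \eqref{eqn: result for Ext_E1(M2) in R}. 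I would organize the whole chart by coweight modulo $4$, exactly as in the Atiyah--Hirzebruch analysis, so that each chart is a finite, $\tau^4$-periodic object.

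For the differentials I would use extension of scalars $\r\to\c$. This gives a map of Adams spectral sequences sending $\rho\mapsto0$ and fixing $h_0$, $h_1$, $a$, $b$, $v_1$ and the $x_i$. Any $\c$-motivic $d_n$ from \cref{thm:c-kq} whose source lifts to a $\rho$-free class over $\r$ must therefore lift, modulo $\rho$-multiples. Concretely, the lifted differentials are
\[
d_n(x_2)=h_0^nx_1,\quad d_n(v_1x_2)=h_0^{n+1}x_3,\quad d_n(x_6)=h_0^{n-1}v_1x_3,\quad d_n(v_1x_6)=h_0^{n+1}x_7+h_0^{n-1}v_1^2x_3,
\]
and so on through stem $14$, together with the $n=2$ pattern. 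Here $\tau$ is replaced by $\tau^2$ or $\tau^4$ as appropriate, because only even powers of $\tau$ survive over $\r$.

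Next I would check that the possible $\rho$-multiple correction terms vanish. The key inputs are the relations $\rho h_0=0$, $\rho^3v_1=0$ and $\rho\cdot\tau^2h_0=0$. Any correction term would have to be a $\rho$-multiple, and in the target degrees every candidate is killed by one of these relations or lands in $\rho$-torsion-free towers of the wrong weight. I would then propagate the differentials by linearity over $h_0$, $h_1$, $\rho$, $\tau^4$, $a$, $b$ and $\tau^2h_0$. This includes $d_n(v_1^2x)=v_1^2d_n(x)$, proved by the same $a=h_0v_1^2$ argument as in \cref{prop:diff in ko}.

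The $\rho$-towers on $x_1$ and on the $h_1$-multiples are $h_0$-torsion. Because differentials are $h_0$-linear and these towers pair with $\rho$-periodic sources only in the wrong coweight, they should be permanent cycles and should not be hit. I expect them to survive and to produce the $\f_2[\tau^2,\rho]$-type summands.

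The main obstacle is the summand $\Ext_{\ca^\r(1)}\{x_1\}$ together with the $\rho$-torsion classes of the $\ce^\r(1)$ summands. There, $\c$-motivic information cannot rule out exotic $\rho$-dependent differentials, for example differentials from $\rho$-divisible classes in coweight $2$ into $\tau^2h_0$-type classes. To handle this, I would count group orders. The $E_4=E_\infty$ page of the Atiyah--Hirzebruch spectral sequence in \cref{fig:kq_AHSS_BC_{2^n}_E4all} determines the order of each $kq^\r_{s,w}(B\mu_{2^n})$ in every coweight. I would compare this chart by chart with the candidate Adams $E_\infty$: any further Adams differential would make the count too small, so none can occur. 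Where the count is ambiguous, I would use the $d_3$ found in the AHSS, $d_3(h_1\cdot\tau h_1)=h_0^{n-1}a$, as the guide; on the Adams side it corresponds to the $\r$-lift of $d_n(v_1^2x_2)=ah_0^{n-1}x_1$. The surviving classes would then be assembled into the charts of \cref{fig:kq ASS over R}, with the module structure over $E_\infty(kq^\r_{**})$ read off from the $E_2$ products. For $n>2$ this is valid through stem $14$, and for $n=2$ in all stems.
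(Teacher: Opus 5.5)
Your plan is essentially the paper's proof: the same splitting of the $E_2$-page, the $\c$-motivic differentials of \cref{thm:c-kq} imported by extension of scalars with $\rho$-multiple corrections discarded, propagation by $h_0$-, $\tau^2h_0$- and $v_1^2$-linearity, and comparison with the $\r$-motivic Atiyah--Hirzebruch spectral sequence. The difference is that the paper uses the Atiyah--Hirzebruch spectral sequence directly to force $d_n(x_2)$, $d_n(\tau^2h_0x_2)$ and $d_n(v_1x_2)$ in stems $1$--$4$, and uses base change only for the $x_{4k+2}$ families, whereas you use it as a global order count.

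One justification is wrong as stated. The towers $\rho^jh_1^kx_1$ are not excluded by weight or coweight. For instance, $\rho^nh_1^nx_1$ lies in the same tridegree as $h_0^nx_1$, and the towers $\rho^ax_2$, $\rho^ax_3$ sit in coweight $1$, exactly one coweight above them. What excludes these towers, both as correction terms and as targets of longer differentials, is $h_1$-linearity: $h_1$ restricts to zero in $\Ext_{\ce^\r(1)}$, so it annihilates the $\ce^\r(1)$-summands, while it acts injectively on the towers $\rho^jh_1^kx_1$.
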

\begin{proof}
Since we have already determined the $E_2$-page, we now determine the differentials.
First, for the cell in degree~1, the group \(kq_{\ast\ast}^{\r}/h\) appears, corresponding to the black chart for ranks.
\begin{figure}[H]
   \includegraphics[width=5.9in]{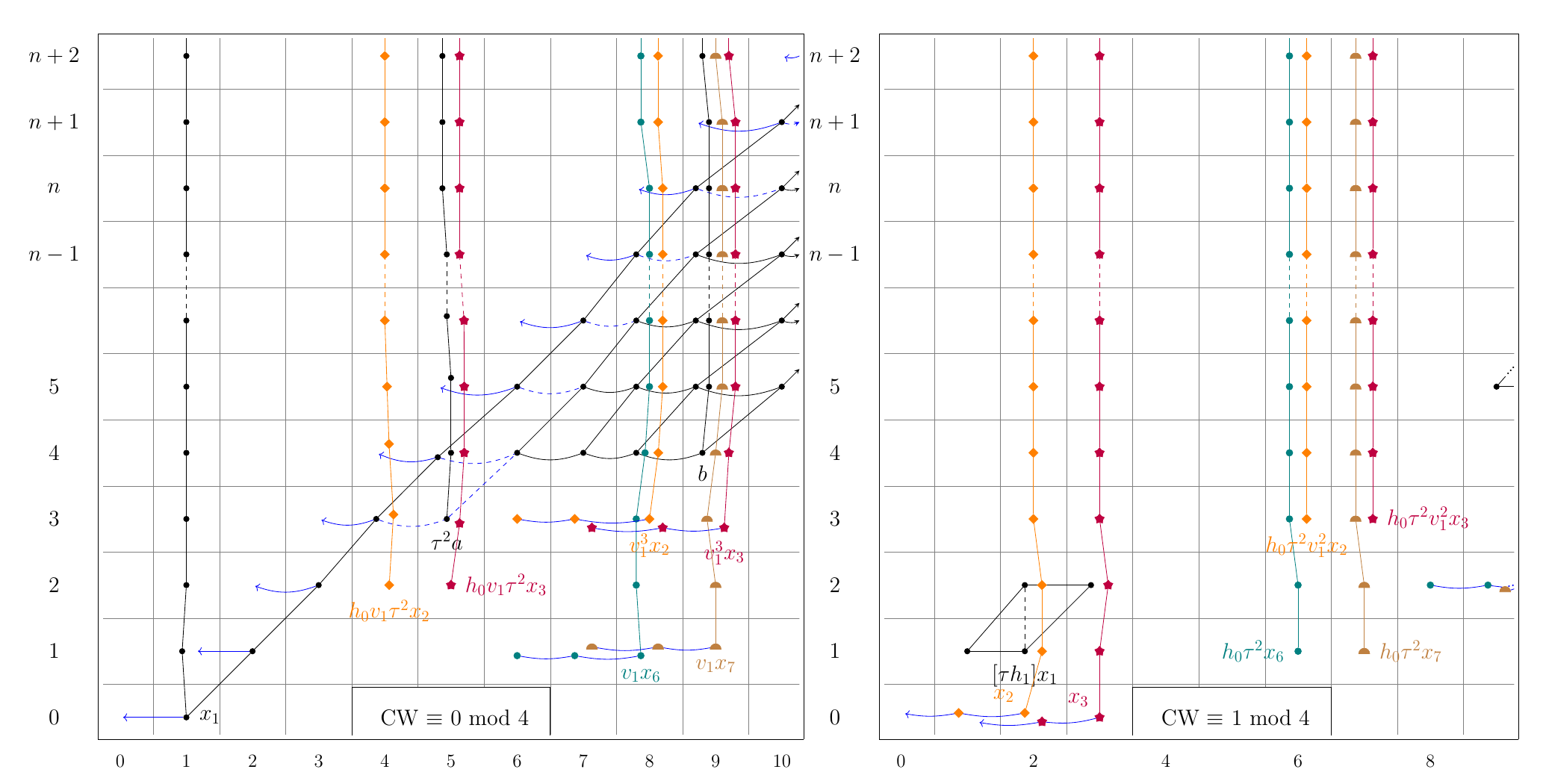}
     \includegraphics[width=6.1in]{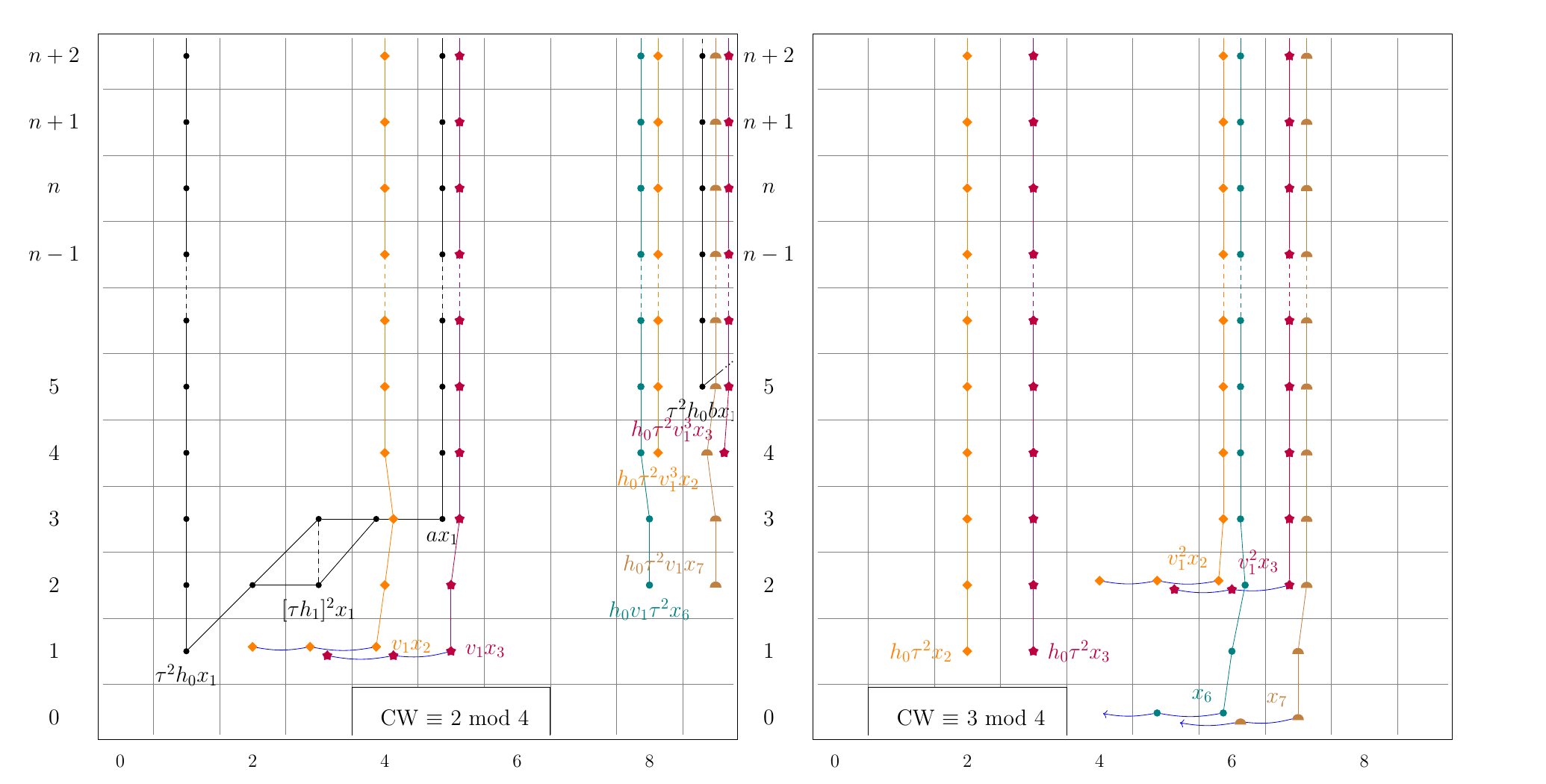} 
     \vspace{-0.9cm}
   \caption{Chart representing the $E_n$-page of the Adams spectral sequence converging to $kq_{**}^\r 
   		B\mu_{2^n}$, organized by coweights modulo 4.
		Each $\bullet$ represents $\f_2[\tau^4]$, and green lines 
		indicate multiplication by $\rho$.}
   \label{fig:E_n kq}
\end{figure}

For arbitrary \(n\), the nontrivial Adams differentials through stem \(14\) are as follows:

The differentials in stem{1-4} are obtained by comparison with the Atiyah--Hirzebruch spectral sequence where the $\rho$-towers correspond to $\rho$-towers in the filtration 0 in the Adams spectral sequence. However, in filtration~1, there is a copy of \(\z/2^n[\tau^4]\) in coweight congruent to \(1\) modulo~\(4\), a copy of \(\f_2[\tau^4]\) in coweight congruent to \(2\) modulo~\(4\), and a copy of \(\z/2^{n-1}[\tau^4]\) in coweight congruent to \(3\) modulo~\(4\). These correspond to the differentials
\[
d_n(x_2)=h_0^n x_1,
\qquad
d_n(h_0\tau^2x_2)=\tau^2h_0^{n+1}x_1,
\]
since \(d_n\) increases the filtration  by \(n\) and decreases the stem by \(1\).
Also,the differentials are \(h_0\tau^2\)-linear as \(h_0\tau^2\) is a permanent class. Therefore, \((\mathrm{i})\) gives 
\(
d_n(h_0\tau^2x_2)=\tau^2h_0^{n+1}x_1,
\) directly.

The differential
\(
d_n(v_1x_2)=h_0^{n+1}x_3
\)
is obtained by comparing with the Atiyah--Hirzebruch spectral sequence in filtrations~3 and~4; see the AHSS chart in \cref{fig:kq_AHSS_BC_{2^n}_E4all}.

In filtration~3, there remain two copies of \(\z/2\) and four copies of \(\f_2\), together with classes from the \(\rho\)-towers. In filtration~4, only two copies of \(\f_2\) remain, along with the \(\rho\)-tower classes. This forces the entire \(v_1x_2\)-tower to kill the \(x_3\)-tower, respecting the degree.

The \(h_0\tau^2\)-linearity of the spectral sequence then gives
\[
d_n(h_0\tau^2v_1x_2) =h_0\tau^2d_n(v_1x_2)=h_0^{n+2}\tau^2x_3.
\]
This is also reflected in the Atiyah--Hirzebruch spectral sequence: the entire tower on \(h_0v_1\tau^2x_2\) must kill the tower on \(\tau^2x_3\), beginning with the class \(h_0^{n+1}\tau^2x_3\).

The extension-of-scalars functor again induces a morphism of Adams spectral sequences compatible with all differentials. Under this morphism, \(\rho\) maps to zero, while \(h_0\), \(v_1\), and the classes \(x_j\) map to their corresponding \(\c\)-motivic classes.

The corresponding \(\c\)-motivic differentials were established in \cref{thm:c-kq}. Naturality therefore gives
\[
d_n(x_{4k+2})
=h_0^{n-1}v_1x_{4k-1} \quad\text{and}\quad
d_n(v_1x_{4k+2})
=h_0^{n+1}x_{4k+3}
+h_0^{n-1}v_1^2x_{4k-1}.
\]
Thus, in the \(\r\)-motivic Adams spectral sequence, the differentials must have the displayed terms, possibly together with $\rho$-multiple terms that vanish after extension of scalars.
However, since the target classes are $\rho$-torsion, the differentials coincide with the \(\c\)-motivic differentials upto linearity.

These differentials exhibit \(v_1^2\)-linearity, which follows as in the classical case.

\medskip
For \(n=2\), the corresponding \(\c\)-motivic differentials continue in all stems. Applying the same extension-of-scalars argument lifts these differentials to the \(\r\)-motivic Adams spectral sequence. Comparison with the \(\r\)-motivic Atiyah--Hirzebruch spectral sequence and the structure of the \(\rho\)-towers again rule out any additional \(\rho\)-multiple terms. Hence, the families in \((\mathrm{v})\) and \((\mathrm{vi})\) hold for every \(k>0\) when \(n=2\).

This completes the computation of the $E_n$-term of the Adams spectral sequence. The spectral sequence collapses for degree reasons, and the final $E_{n+1}$-term is illustrated in \cref{fig:kq ASS over R}. 
\end{proof}

\begin{figure}[H] \vspace{-0.5cm}
	\centering    
	\includegraphics[width=5.9in]{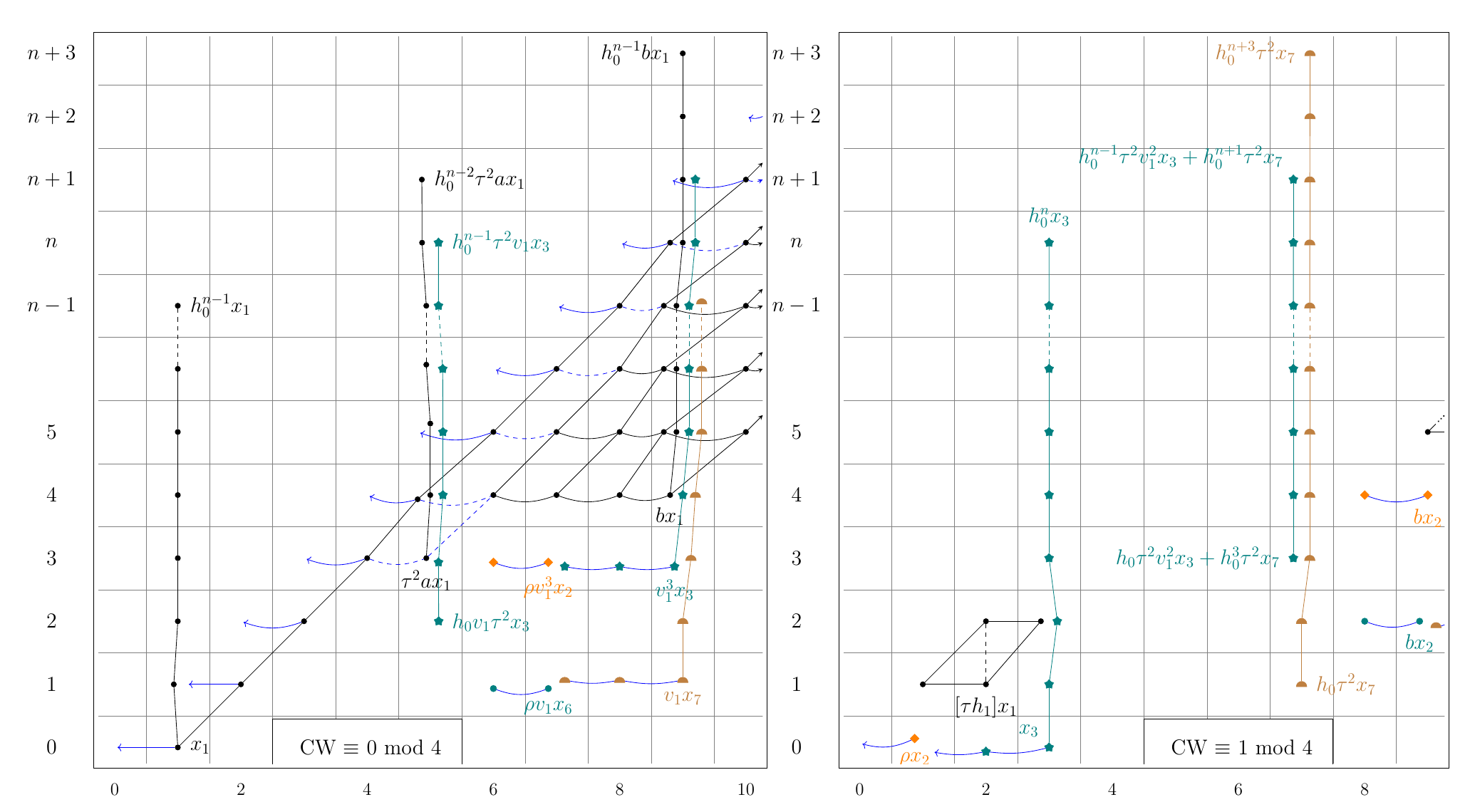}	 \vspace{-0.5cm}
	\includegraphics[width=6.0 in]{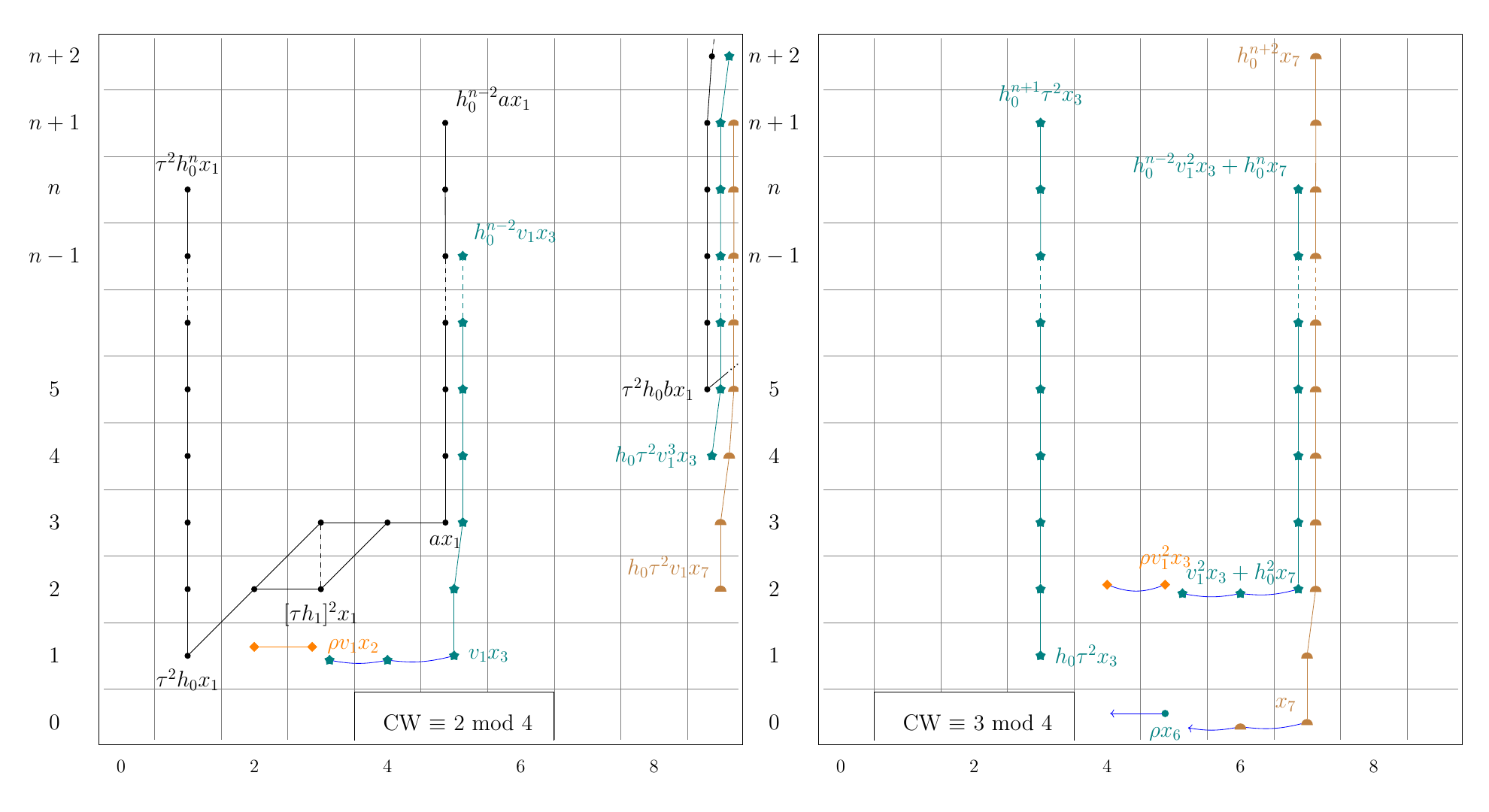}
	\caption{
		The chart represents $E_\infty(kq_{k,w}^\r (B\mu_{2^n}))$, organized by coweights modulo 4: 
		Each bullet represents $\f_2[\tau^4]$.
	}

	\label{fig:kq ASS over R}
\end{figure}


\appendix

\section{An eta-invariant approach to differentials on indecomposable classes}
\label{app:eta-invariant}

In this appendix, we use the eta invariant to detect an element in
\(ko_7(BC_{2^n})\) whose order is divisible by \(2^{n+3}\). The idea is
to construct a class from a seven-dimensional lens space and then use
the eta-invariant homomorphism of \cite[Theorem~4.1]{BGS97} to estimate
its order.

\begin{proposition}
For \(n\geq 2\), the group \(ko_7(BC_{2^n})\) contains an element whose
order is divisible by \(2^{n+3}\).
\end{proposition}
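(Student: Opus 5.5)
Following the appendix's stated strategy, I will construct an explicit class in \(ko_7(BC_{2^n})\) from the seven-dimensional lens space and bound its order from below using the eta invariant.

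\emph{The class.} Let \(L^7=S^7/C_{2^n}\), with \(C_{2^n}\) acting on \(S^7\subset\c^4\) through the scalar character \(\lambda\) (so each coordinate is multiplied by a fixed primitive \(2^n\)-th root of unity). This is a spin manifold, since \(4\) is even. Give it the spin structure for which the classifying map \(f\colon L^7\to BC_{2^n}\) yields a class \([L^7,f]\) in the spin bordism group \(\Omega^{\mathrm{spin}}_7(BC_{2^n})\). Push this class forward along the Atiyah--Bott--Shapiro orientation to get
\[
[L^7]\in ko_7(BC_{2^n}).
\]
It is convenient to use the reduced version: subtract the image of the constant map. Since \(ko_7=0\), nothing is lost.

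\emph{The invariant.} By \cite[Theorem~4.1]{BGS97}, for each virtual representation \(\rho\) of virtual dimension zero, the twisted eta invariant gives a homomorphism
\[
\eta_\rho\colon ko_7(BC_{2^n})\longrightarrow \mathbb{Q}/\z .
\]
On a lens space it is computed by the equivariant Atiyah--Patodi--Singer/Donnelly formula:
\[
\eta_\rho(L^7)=\frac{1}{2^n}\sum_{g\neq 1}\operatorname{Tr}\rho(g)\,\prod_{j=1}^4\frac{\lambda(g)^{1/2}}{\lambda(g)-1}\pmod{\z},
\]
possibly up to a normalizing sign or factor. I will take \(\rho=\lambda-1\) or \(\rho=(\lambda-1)^2\). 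Any homomorphism kills \(2^{k}\) times a class of order \(2^k\). So it suffices to find one \(\rho\) with \(\eta_\rho([L^7])\) of additive order exactly divisible by \(2^{n+3}\) in \(\mathbb{Q}/\z\). Then the order of \([L^7]\) is divisible by \(2^{n+3}\).

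\emph{Main obstacle.} The key step, and the one I expect to be hardest, is evaluating the 2-adic valuation of the character sum. My plan is to rewrite \(\prod_j \lambda^{1/2}/(\lambda-1)\) as \((2i\sin(\theta/2))^{-4}\) and use the standard closed forms for Dedekind-type sums
\[
\sum_{g\neq1}\frac{\zeta^{k}}{(1-\zeta)^4}
\]
over \(2^n\)-th roots of unity \(\zeta\). These are polynomials in \(2^n\) with rational coefficients, obtained via generating functions, i.e.\ Todd/\(\hat A\)-genus coefficients. The sum is then congruent to something like \(c/2^{n+3}\) with \(c\) odd. Extra powers of \(2\) in the denominator come from the \(\hat A\)-type coefficients; the factor \(2^3\) should arise from the degree-\(4\) term.

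For a sanity check I would first do \(n=2\). There \(ko_7(BC_4)\) contains \(\z/2^5\), per \cite[Example~7.3.3]{BG03} as quoted in \cref{prop:diff in ko}, which matches \(2^{n+3}\). If the naive choice of \(\rho\) gives a smaller order, I would instead choose \(\rho\) so as to cancel the leading term. An alternative is \(\rho=\lambda^{a}-1\) with \(a\) odd, varying \(a\) to isolate the top 2-adic term. A further fallback is to replace \(L^7\) by the lens space \(L(2^n;1,1,1,3)\) or a combination of such, whose eta invariants generate the relevant dual group.
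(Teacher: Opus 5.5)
Your setup is the paper's: the lens space $L(2^n;1,1,1,1)$, its Atiyah--Bott--Shapiro image in $ko_7(BC_{2^n})$, the eta homomorphism of \cite[Theorem~4.1]{BGS97}, and $\rho=\lambda-1$ (the paper's $\rho_1-\rho_0$). The gap is that you stop at the one step that carries the content of the proposition: you never evaluate $\eta_{\lambda-1}([L^7])$. Saying it is ``something like $c/2^{n+3}$ with $c$ odd'' is a hope, not an argument. It is also hedged by an unspecified ``normalizing sign or factor'' (a factor of $2$ would change the conclusion), by an unspecified choice of the square roots $\lambda(g)^{1/2}$ (this choice is the spin structure), and by fallback choices of $\rho$ and of lens space. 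These choices matter. Your other candidate, $\rho=(\lambda-1)^2$, gives $\frac{(k-1)(5-k)}{12k}$ with $k=2^n$, which has order only $2^{n+2}$. As written, you have proved the statement only for $n=2$, and only by quoting $\z/2^5\subset ko_7(BC_4)$ from \cite{BG03}.

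Your own sine rewriting closes the gap. Take $k=2^n$, $\zeta=e^{2\pi i m/k}$ with $1\le m\le k-1$, and the spin structure with $\prod_{j=1}^4\lambda(g)^{1/2}=\lambda(g)^2$. Then $\zeta^2/(\zeta-1)^4=1/\bigl(16\sin^4(\pi m/k)\bigr)$ and $\operatorname{Re}(\zeta-1)=-2\sin^2(\pi m/k)$, while the imaginary parts cancel under $m\mapsto k-m$. Hence
\[
\eta_{\lambda-1}([L^7])=-\frac{1}{8k}\sum_{m=1}^{k-1}\csc^2\Bigl(\frac{\pi m}{k}\Bigr)=-\frac{k^2-1}{24k}=-\frac{(4^n-1)/3}{2^{n+3}},
\]
by the classical identity $\sum_{m=1}^{k-1}\csc^2(\pi m/k)=(k^2-1)/3$. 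For $n=2$ this is $-5/32$, consistent with \cite{BG03}. Since $(4^n-1)/3$ is an odd integer, the value has order exactly $2^{n+3}$ in $\mathbb{Q}/\mathbb{Z}$, and your homomorphism argument finishes the proof. The other spin structure inserts $(-1)^m$ and gives $\frac{(2^{2n-1}+1)/3}{2^{n+3}}$, again of order $2^{n+3}$, so no fallback is needed.

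The paper evaluates the same invariant through Gilkey's $\widehat{A}$-polynomial formula instead. The direct character sum is more elementary, and its answer is visibly $2$-primary. The paper's closed form $-u_n/(3\cdot 2^{n+3})$ is correct only $2$-adically. For $n=2$ it gives $-55/96$, of order $96$, which cannot be the value of a homomorphism on an element of the finite $2$-group $ko_7(BC_4)$. So if you compare with the paper, trust only the $2$-primary part of its formula.
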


Before proving the proposition, we recall the necessary background on
lens spaces and eta invariants.

Let \(k\geq 2\), and let \(q_1,\ldots,q_\ell\) be integers that are
coprime to \(k\). The lens space
\[
L(k;q_1,\ldots,q_\ell) = S^{2\ell-1}/C_k,
\]
is given by a fixed generator acting by the action
\(
(z_1,\ldots,z_\ell)
\longmapsto
\Bigl(
e^{2\pi i q_1/k}z_1,\ldots,
e^{2\pi i q_\ell/k}z_\ell
\Bigr).
\)
The universal covering determines a classifying map \( c\colon L(k;q_1,\ldots,q_\ell)\longrightarrow BC_k. \)

If \(\ell\) is even or \(k\) is odd, then the lens space is spin; see
\cite[the discussion preceding Theorem~4.6]{BGS97}. After choosing a
spin structure, the pair
\[
\Bigl(L(k;q_1,\ldots,q_\ell),c\Bigr)
\]
determines a class
\[
\Bigl[L(k;q_1,\ldots,q_\ell),c\Bigr]
\in
\Omega_{2\ell-1}^{\mathrm{Spin}}(BC_k).
\]
Applying the Atiyah--Bott--Shapiro orientation gives a class in real
connective \(K\)-homology,
\[
\Omega_{2\ell-1}^{\mathrm{Spin}}(BC_k)
\longrightarrow
ko_{2\ell-1}(BC_k),
\]
which is $7$-connected; see also \cite{ABP67} for details.

To detect the order of this class, we use the periodicization map
\[
p\colon
ko_{2\ell-1}(BC_k)
\longrightarrow
KO_{2\ell-1}(BC_k),
\]
followed by an eta invariant on periodic \(KO\)-homology. By
\cite[Theorem~4.1]{BGS97}, every virtual representation \(\rho\) of
\(C_k\) of virtual dimension zero determines a homomorphism
\[
\eta(\rho)\colon
KO_{2\ell-1}(BC_k)
\longrightarrow
\mathbb{R}/\mathbb{Z}.
\]
Composing with periodicization gives a homomorphism
\[
\overline{\eta}(\rho)
:=
\eta(\rho)\circ p
\colon
ko_{2\ell-1}(BC_k)
\longrightarrow
\mathbb{R}/\mathbb{Z}.
\]
The main idea of the proof proceeds as follows: 

Since a group homomorphism cannot increase the order of an element, if
\(\overline{\eta}(\rho)(x)\) has order \(m\), then the order of
\(x\in ko_{2\ell-1}(BC_k)\) is divisible by \(m\).

\begin{proof}
Set \(k=2^n\), and let
\(
\rho_s\colon C_k\longrightarrow U(1)
\)
denote the one-dimensional representation for which a fixed generator
of \(C_k\) acts by \(e^{2\pi i s/k}\). Since
\(
\rho_s-\rho_{s'}
\)
has virtual dimension zero, it determines a homomorphism
\[
\overline{\eta}_{s,s'}
:=
\overline{\eta}\Bigl(\rho_s-\rho_{s'}\Bigr)
\colon
ko_{2\ell-1}(BC_k)
\longrightarrow
\mathbb{R}/\mathbb{Z}.
\]
Writing \(\eta_s^{\mathrm{Dirac}}\) for the eta invariant of the Dirac
operator twisted by the flat bundle associated to \(\rho_s\), we have
\[
\overline{\eta}_{s,s'}
=
\Bigl(
\eta_s^{\mathrm{Dirac}}
-
\eta_{s'}^{\mathrm{Dirac}}
\Bigr)\circ p.
\]

We now specialize to stem \(7\), so take \(\ell=4\), and consider the
lens space
\(
L_n:=L\Bigl(2^n;1,1,1,1\Bigr).
\)
Also, let
\(
c_n\colon L_n\longrightarrow BC_{2^n}
\)
denote the classifying map of its universal covering. Since \(\ell=4\)
is even, \(L_n\) is spin, and the Atiyah--Bott--Shapiro orientation sends its spin bordism class
\[
[L_n,c_n]\in\Omega_7^{\mathrm{Spin}}(BC_{2^n})
\mapsto
x_n\in ko_7(BC_{2^n}).
\]
We detect the order of \(x_n\) using the eta-invariant homomorphism
described above.

For
\(
\mathbf{q}=(q_1,\ldots,q_\ell),
\)
Gilkey's formula \cite[Theorem~2.5(c)]{Gilkey84} gives
\[
\eta_s^{\mathrm{Dirac}}
\Bigl(L(2^n;\mathbf{q})\Bigr)
=
-\frac{1}{2^n}
\widehat{A}_\ell
\Bigl(
s+2^{n-1};q_1,\ldots,q_\ell
\Bigr)
\in\mathbb{R}/\mathbb{Z},
\]
where
\[
\widehat{A}_\ell(s;\mathbf{x})
=
\sum_{a+2b=\ell}
\frac{s^a}{a!}\widehat{A}_b(\mathbf{x}).
\]
Here $\widehat A$ is the Hirzebruch $\widehat A$-genus, described in \cite[Section~23.1]{BH59-II}.

For the calculation in stem \(7\), we use
\[
\widehat{A}_1(\mathbf{x})
=
-\frac{1}{24}\sum_i x_i^2
\qquad \text{and} \qquad
\widehat{A}_2(\mathbf{x})
=
\frac{1}{5760}
\left(
-4\sum_{i<j}x_i^2x_j^2
+
7\left(\sum_i x_i^2\right)^2
\right).
\]

For \(\ell=4\) and \(\mathbf{q}=(1,1,1,1)\), these become
\[
\widehat{A}_1(1,1,1,1)
=
-\frac{1}{6}
\qquad \text{and} \qquad 
\widehat{A}_2(1,1,1,1)
=
\frac{1}{5760}
\left(
-4\sum_{i<j}1
+
7\left(\sum_{i=1}^{4}1\right)^2
\right)
=
\frac{11}{720}.
\]
It follows that
\[
\widehat{A}_4(s;1,1,1,1)
=
\frac{s^4}{4!}
+
\frac{s^2}{2!}\widehat{A}_1(1,1,1,1)
+
\widehat{A}_2(1,1,1,1) \\
=
\frac{s^4}{24}
-
\frac{s^2}{12}
+
\frac{11}{720}.
\]
Consequently,
\[
\widehat{A}_4
\Bigl(
s+2^{n-1};1,1,1,1
\Bigr)
=
\frac{\Bigl(s+2^{n-1}\Bigr)^4}{24}
-
\frac{\Bigl(s+2^{n-1}\Bigr)^2}{12}
+
\frac{11}{720},
\]
and hence
\[
\eta_s^{\mathrm{Dirac}}
\Bigl(L\Bigl(2^n;1,1,1,1\Bigr)\Bigr)
=
-\frac{1}{2^n}
\left(
\frac{\Bigl(s+2^{n-1}\Bigr)^4}{24}
-
\frac{\Bigl(s+2^{n-1}\Bigr)^2}{12}
+
\frac{11}{720}
\right)
\in\mathbb{R}/\mathbb{Z}.
\]

We now evaluate the relative eta invariant
\(
\eta_1^{\mathrm{Dirac}}-\eta_0^{\mathrm{Dirac}}
\)
on \(L_n\). The constant terms cancel, giving
\[
\begin{aligned}
\Bigl(
\eta_1^{\mathrm{Dirac}}
-
\eta_0^{\mathrm{Dirac}}
\Bigr)(L_n)
&=
-\frac{1}{2^n}
\Bigl(
\frac{\Bigl(2^{n-1}+1\Bigr)^4}{24}
-
\frac{\Bigl(2^{n-1}+1\Bigr)^2}{12}
-
\frac{2^{4n-4}}{24}
+
\frac{2^{2n-2}}{12}
\Bigr) \\
&=
-\frac{
\Bigl(2^{n-1}+1\Bigr)^4
-
2\Bigl(2^{n-1}+1\Bigr)^2
-
2^{4n-4}
+
2^{2n-1}
}{
24\cdot 2^n
}
\in\mathbb{R}/\mathbb{Z}.
\end{aligned}
\]

Set
\[
u_n
=
\Bigl(2^{n-1}+1\Bigr)^4
-
2\Bigl(2^{n-1}+1\Bigr)^2
-
2^{4n-4}
+
2^{2n-1}.
\]
Then
\[
\Bigl(
\eta_1^{\mathrm{Dirac}}
-
\eta_0^{\mathrm{Dirac}}
\Bigr)(L_n)
=
-\frac{u_n}{3\cdot 2^{n+3}}
\in\mathbb{R}/\mathbb{Z}.
\]
For \(n\geq 2\), all terms in the definition of \(u_n\), except the
first, are even, while
\[
\Bigl(2^{n-1}+1\Bigr)^4\equiv 1\pmod 2.
\]
Thus \(u_n\) is odd. Therefore, no factor of \(2\) cancels from the
denominator, and
\(
\overline{\eta}_{1,0}(x_n)\in\mathbb{R}/\mathbb{Z}
\)
has order divisible by \(2^{n+3}\).

The Atiyah--Hirzebruch spectral sequence shows that
\(ko_7(BC_{2^n})\) is a finite \(2\)-group. We may therefore write
\[
ko_7(BC_{2^n})
\cong
\bigoplus_{i=1}^{r}\mathbb{Z}/2^{j_i}.
\]
Suppose, for contradiction, that
\[
j_i\leq n+2
\qquad
\text{for every }1\leq i\leq r.
\]
Then multiplication by \(2^{n+2}\) annihilates every cyclic summand, so
\(
2^{n+2}x_n=0.
\)
Applying \(\overline{\eta}_{1,0}\) gives
\(
2^{n+2}\overline{\eta}_{1,0}(x_n)=0,
\)
contradicting the fact that
\(\overline{\eta}_{1,0}(x_n)\) has order divisible by \(2^{n+3}\).

Therefore,
\(
j_i\geq n+3
\)
for some \(i\). The corresponding cyclic summand contains an element
whose order is divisible by \(2^{n+3}\).
\end{proof}

\begin{remark}
The eta-invariant method described above also applies in higher stems.
However, by itself it provides limited information about the
differentials supported by indecomposable classes.
\end{remark}

\begin{remark}
Another possible approach uses the Bockstein homology spectral sequence.
This approach is used in \cite{BB96} and \cite{Bay94}, where this spectral sequence is combined
with the May--Milgram theorem to relate Bockstein differentials to
differentials between \(h_0\)-towers. Applying this comparison in the
present setting requires certain duality identifications that we do not
discuss here.
\end{remark}

\bibliographystyle{alpha}
\bibliography{master}

\end{document}